\theoremstyle{plain}
\newtheorem{thm}{Theorem}[section]
\newtheorem{theorem}[thm]{Theorem}
\newtheorem{lemma}[thm]{Lemma}
\newtheorem{corollary}[thm]{Corollary}
\newtheorem{proposition}[thm]{Proposition}
\theoremstyle{definition}
\newtheorem{remark}[thm]{Remark}
\newtheorem{notation}[thm]{Notation}
\newtheorem{definition}[thm]{Definition}
\newtheorem{question}[thm]{Question}
\numberwithin{equation}{section}
\newcommand{\sB}{{\mathcal B}}
\newcommand{\sH}{{\mathcal H}}
\newcommand{\BO}{{\mathbb O}}
\newcommand{\C}{{\mathbb C}}
\newcommand{\BP}{{\mathbb P}}
\newcommand{\BS}{{\mathbb S}}
\newcommand{\End}{{\rm End}}
\newcommand{\fsl}{{\mathfrak s}{\mathfrak l}}
\newcommand{\fgl}{{\mathfrak g}{\mathfrak l}}
\newcommand{\aut}{{\mathfrak a}{\mathfrak u}{\mathfrak t}}
\def\Sym{\mathop{\rm Sym}\nolimits}
\def\Hom{\mathop{\rm Hom}\nolimits}
\title[Automorphisms of cubic hypersurfaces]{Prolongations of infinitesimal automorphisms of cubic hypersurfaces with nonzero Hessian}
\author[Jun-Muk Hwang]{Jun-Muk Hwang} 
\address{Korea Institute for Advanced Study, Hoegiro 87, Seoul 02455, Korea}
\email{jmhwang@kias.re.kr}
\thanks{The author is supported
by National Researcher Program 2010-0020413 of NRF}
\begin{document}

\begin{abstract}
We study the connected component of the automorphism group of a cubic hypersurface over complex numbers. When the cubic hypersurface has nonzero Hessian,  this group is usually small. But there are examples with unusually large automorphism groups: the secants of Severi varieties.
Can we characterize them by the property of having unusually large automorphism groups?
 We study this question from the viewpoint of  prolongations of the Lie algebras.
   Our result characterizes the secants of Severi varieties, among cubic hypersurfaces with nonzero Hessian and smooth singular locus, in terms of prolongations of certain type.
\end{abstract}

\maketitle

\noindent {\sc Keywords:} cubic hypersurface,  Severi variety, Gauss map, prolongation of Lie algebras

\noindent {\sc MSC2010:} 14J50, 14M17

\section{Introduction}
Throughout, we will work over $\C$.
Let
$Y \subset \BP W$ be an irreducible reduced cubic hypersurface defined by a cubic form $f \in \Sym^3 W^*$ on a vector space $W$. We can consider the following three nondegeneracy conditions on the cubic form.

 \begin{itemize}
  \item[(ND1)]
  For any $w \in W$, there exists $u, v \in W$ such that $f(w, u, v) \neq 0.$
  \item[(ND2)]
 There exists $w \in W$ such that if $u \neq 0,$  then $f(w, u, v) \neq 0$ for some $v \in W$. \item[(ND3)] For any $w \in W$, there exists $u \in W$ such that $f(w, w, u) \neq 0$. \end{itemize}

There are obvious implications (ND3) $\Rightarrow$ (ND2) $\Rightarrow$ (ND1).
It is clear that $f$ satisfies (ND1) iff $Y$ is not a cone, and satisfies (ND3) iff $Y$ is nonsingular.
If $f$ satisfies (ND2), we say that $Y$ has nonzero Hessian (Definition \ref{d.Hessian}).
The main interest of this article is on the Lie algebra $\aut(Y) \subset \fsl(W)$ of infinitesimal linear automorphisms  of a cubic hypersurface $Y$ with nonzero Hessian.

If $f$ does not satisfy (ND1), namely, if $Y$ is a cone, then $\dim \aut(Y) \geq 1$.
On the other hand, if $f$ satisfies (ND3), then $\aut(Y) =0$. It seems reasonable to expect that
$\aut(Y)$ is not too big if $f$ satisfies (ND2).
 But there are cubics with nonzero Hessian that have many nontrivial infinitesimal automorphisms. The most famous ones are secants of Severi varieties (see  Theorem IV.4.7 in \cite{Za}), or equivalently,  homaloidal EKP cubics (see Theorem 3 in \cite{Do}). For the secant $Y = {\rm Sec}(S)$ of a Severi variety $S \subset \BP W$,  the Lie algebra   $\aut(Y)$ is equal to $\aut(S)$, which is semisimple and $S$ is homogeneous under $\aut(Y)$. This leads to the following question.

\begin{question}\label{q.vague}
  Let $Y \subset \BP W$ be an irreducible cubic hypersurface with nonzero Hessian. If $\aut(Y)$ is {\em unusually large}, is it the secant of a Severi variety? \end{question}

\medskip
Here we used the vague term `unusually large' intentionally so that the question can be interpreted in many different ways.
In this article, we will interpret it in terms of the prolongation  $$\aut(\widehat{Y})^{(1)}
\subset \Hom (S^2 W, W)$$
of the Lie algebra $$\aut(\widehat{Y}) = \C \cdot {\rm Id}_W + \aut(Y) \ \subset \ \fgl(W) = {\rm End}(W)$$ defined as follows.

\begin{definition}\label{d.prolong}
Let $W$ be a vector space.
For an element $A \in  \Hom (S^2 W, W),$ denote by $A_{uv} = A_{vu} \in W$ its value
at $u, v \in W$. For a projective variety $Z \subset \BP W$, we say that $A$ is a {\em prolongation} of $\aut(\widehat{Z})$ if
for each $w \in W$, the endomorphism $A_w \in {\rm End}(W)$ defined by $A_w (u) := A_{wu}$ belongs to $\aut(\widehat{Z})$. The vector space of all prolongations of $\aut(\widehat{Z})$
will be denoted by $\aut(\widehat{Z})^{(1)}.$ \end{definition}

The elements of $\aut(\widehat{Y})^{(1)}$  can be regarded as higher-order infinitesimal automorphisms of $Y$.  It is known that $\aut(\widehat{Y})^{(1)} \cong W^*$ when $Y$ is the secant of a Severi variety (e.g. Proposition 3.4 of \cite{FH12}). Thus  $\aut(\widehat{Y})^{(1)} \neq 0$ is one possible interpretation of   the phrase that $\aut(Y)$ is unusually large and we can refine Question \ref{q.vague} as follows.

\begin{question}\label{q} Let $Y$ be an irreducible cubic hypersurface with nonzero Hessian.
If $\aut(\widehat{Y})^{(1)} \neq 0,$ is $Y$ the secant of a Severi variety? \end{question}

For a cubic hypersurface $Y$, certain elements of  $\aut(\widehat{Y})^{(1)}$  are of particular interest. To define them, we need some notation first.
For $f \in \Sym^3 W^*$ defining $Y$ and two vectors $u, v \in W$, let $f_{uv} \in W^*$ be the linear functional $w \mapsto f_{uv}(w) = f( u, v, w)$. Each $A \in \aut(\widehat{Y})^{(1)}$ determines a linear functional $\chi^A \in W^*$, as explained in Lemma \ref{l.Aut}.

\begin{definition}\label{d.Xi}
Let $Y \subset \BP W$ be an irreducible cubic hypersurface defined by $f \in \Sym^3 W^*$, which has nonzero Hessian.
For a complex number $a \in \C$, let $\Xi^a_Y \subset \aut(\widehat{Y})^{(1)}$ be the subspace consisting of  $ A \in \aut(\widehat{Y})^{(1)}$ that satisfies $$ A_{uv}
=  a \chi^A(u) v + a \chi^A(v) u + h^A(f_{uv}) $$  for some $ h^A \in \Hom(W^*, W)$ and all $u, v \in W$.
 We will see in  Proposition \ref{p.unique} that the homomorphism $h^A$ is uniquely determined by $A$.\end{definition}

When $Y$ is the secant variety of a Severi variety, it can be shown that $\aut(\widehat{Y})^{(1)} = \Xi_Y^{1/2}$,
which  suggests the following weaker version of Question \ref{q}.

\begin{question}\label{q.Xi} Let $Y$ be an irreducible cubic hypersurface with nonzero Hessian.
If $\Xi_Y^{a} \neq 0$ for some $a \in \C$, is $Y$ the secant of a Severi variety?
If $\Xi_Y^{1/2} \neq 0$, is $Y$ the secant of a Severi variety? \end{question}

 Our main result is the following partial answer to Question \ref{q.Xi}.  Let ${\rm Sing}(Y)$ be the set-theoretic singular locus of $Y$.

\begin{theorem}\label{t.cubic} Let $Y \subset \BP W$ be an irreducible cubic hypersurface  with nonzero Hessian.
 Assume that \begin{itemize} \item[(a)] the singular locus
${\rm Sing}(Y)$ is nonsingular; and \item[(b)]
$\Xi_Y^a \neq 0$ for some $a \neq \frac{1}{4}.$ \end{itemize}
Then $Y$ is the secant variety of a Severi variety.
\end{theorem}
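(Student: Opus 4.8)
The plan is to use a single nonzero element $A\in\Xi_Y^a$ to reconstruct on $W$ the structure of a Jordan algebra of degree three whose generic norm is proportional to $f$ — equivalently, to show that $f$ is an EKP cubic form in the sense of \cite{Do} — and then to identify it, via the classification of simple cubic Jordan algebras (equivalently Theorem 3 of \cite{Do}, or Zak's characterisation of Severi varieties, Theorem IV.4.7 of \cite{Za}), with one of the four Freudenthal algebras $J_3(\C_d)$, $d\in\{1,2,4,8\}$, whose zero locus is precisely the secant of a Severi variety. The hypotheses (a) and (b) are used, respectively, to prevent the degenerations that occur when ${\rm Sing}(Y)$ is badly singular, and to keep a key coefficient of the structure equations invertible; the value $a=\tfrac14$ is exactly where that coefficient vanishes.

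First I would unpack the data carried by $A$. By Lemma \ref{l.Aut} there is $\chi=\chi^A\in W^*$ with $A_w-\chi(w)\,{\rm Id}_W\in\aut(Y)$ for all $w\in W$, and by Proposition \ref{p.unique} the homomorphism $h=h^A\colon W^*\to W$ with $A_{uv}=a\chi(u)v+a\chi(v)u+h(f_{uv})$ is well defined, since the functionals $f_{uv}$ span $W^*$ by (ND1). The condition $A_w\in\aut(\widehat Y)$ says that $A_w$ rescales $f$, i.e.\ $f(A_{wu_1},u_2,u_3)+f(u_1,A_{wu_2},u_3)+f(u_1,u_2,A_{wu_3})$ is a multiple of $f(u_1,u_2,u_3)$ depending linearly on $w$; substituting the formula for $A_{uv}$ and reading off the resulting multilinear identity in $f$, $h$, $\chi$, $a$ gives the basic relation. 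From it I expect to extract, after symmetrising and iterating: (i) a symmetry property of $h$ relative to $f$; (ii) a quadratic identity of the shape $f_{\,h(f_{uu}),\,v}=(\text{terms bilinear in }f\text{ and }\chi)$; and (iii) the cube law $h\!\left(f_{\,h(f_{uu}),\,h(f_{uu})}\right)=c\,f(u,u,u)\,u$ with $c\neq0$. The point of assumption (b) is that producing (iii) requires dividing by a factor proportional to $4a-1$. Before going further I would dispose of the case $\chi=0$: then $A_w$ is traceless for every $w$, so the scaling relation forces $f\big(h(f_{wu}),u,u\big)\equiv0$; choosing $u$ generic so that $w\mapsto f_{wu}$ is onto $W^*$ (possible by (ND2)), this says the image of $h$ lies in $\ker f_{uu}$ for generic $u$, hence in $\bigcap_u\ker f_{uu}$, which is $0$ because the $f_{uu}$ span $W^*$ by (ND1); so $h=0$ and $A=0$, a contradiction. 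Thus $\chi\neq0$, and after rescaling $A$ we fix $\chi$ and $h$.

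From (iii) the map $h$ is surjective — its image contains every $u$ with $f(u,u,u)\neq0$ — hence an isomorphism. I would then define a commutative product on $W$ out of $h$, $f$ and $\chi$ (essentially $x\bullet y=h(f_{xy})$, corrected by a multiple of $\chi(x)y+\chi(y)x$ and anchored by an identity element $e$ which the structure equations single out), set $x^\#$ to be the corresponding adjoint (a multiple of $h(f_{xx})$) and $N(x)$ a multiple of $f(x,x,x)$, and verify from (i)--(iii) that $(W,\bullet,e,N)$ is a Jordan algebra of degree three — i.e.\ $Y$ is an EKP cubic — with $Y=\{N=0\}$ and ${\rm Sing}(Y)$ equal to the variety of rank-one elements. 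Since $Y$ is irreducible with nonzero Hessian, the trace form of this algebra is nondegenerate, so the algebra is semisimple, and being of degree three with irreducible norm it is simple, hence one of the $J_3(\C_d)$; moreover $x^{\#\#}=N(x)x$ makes the polar map of $f$ a birational involution of $\BP W$, so $Y$ is homaloidal. Applying Theorem 3 of \cite{Do} (or \cite{Za}, Theorem IV.4.7, to the nonsingular variety ${\rm Sing}(Y)$, whose secant is $Y$) concludes that $Y$ is the secant variety of a Severi variety.

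The main obstacle is the middle step: squeezing the full cube law (iii), and with it the entire Jordan/EKP structure, out of the existence of a single prolongation of the prescribed type rather than out of all of $\aut(\widehat Y)^{(1)}$. This forces one to iterate and re-symmetrise the basic multilinear identity repeatedly while controlling the combinatorial coefficients, and it is precisely here that the factor $4a-1$ appears and must be nonzero. A secondary difficulty, and the place where the smoothness hypothesis (a) enters essentially, is to show that the rational self-maps of $\BP W$ built from $h$ and $f$ (in particular the polar map) are defined away from the nonsingular ${\rm Sing}(Y)$ and carry no extraneous components, so that the algebraic identities (i)--(iii) hold globally and the resulting algebra is free of the nilpotent degeneracies that a singular ${\rm Sing}(Y)$ would permit. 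Checking that the object produced fits the precise hypotheses of \cite{Do,Za} is then routine.
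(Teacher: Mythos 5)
Your route---reconstructing a degree-three Jordan algebra / EKP structure from the single element $A$ and then invoking Dolgachev or Zak---is genuinely different from the paper's, which never builds an algebra: the paper first proves, via a study of the Gauss map and the Gauss spaces $\Gamma_w$, that $Y={\rm Sec}({\rm Sing}(Y))$ (this is where $a\neq\frac14$ is used, and hypothesis (a) is not), then uses hypothesis (a) to extract a single irreducible component $S$ of ${\rm Sing}(Y)$ with $Y={\rm Sec}(S)$, and finally applies the Fu--Hwang classification of smooth nondegenerate varieties with nonzero prolongation whose secant variety is a hypersurface. Your proposal, however, has a genuine gap exactly where you flag ``the main obstacle'': the identities (i)--(iii), above all the cube law $h\bigl(f_{\,h(f_{uu}),\,h(f_{uu})}\bigr)=c\,f(u,u,u)\,u$ with $c\neq0$, are asserted to follow ``after symmetrising and iterating'' but are never derived. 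What actually falls out of Lemma \ref{l.Aut}(2) by direct substitution is essentially Proposition \ref{p.Gamma}, namely $f_{A_{ww},w}\in\C\cdot f_{ww}$, which is \emph{linear} in $h$; the cube law is \emph{quadratic} in $h$, and since $A$ appears only once inside each instance of the defining identity there is no evident iteration that produces it. Establishing (iii) is not combinatorial bookkeeping; it is the entire content of the theorem, and without it the surjectivity of $h$, the identity element, the Jordan axioms, and the final appeal to the classification all collapse. Your semisimplicity step is also unsupported: nonzero Hessian means ${\rm Null}(f_w)=0$ for generic $w$, which is not the nondegeneracy of the trace form of the putative algebra.

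Separately, you have mislocated the role of hypothesis (a). The danger it guards against is not that the rational maps built from $h$ and $f$ acquire extraneous base components; it is that ${\rm Sing}(Y)$ may be \emph{reducible}, with $Y$ equal to the join of two distinct components $S$ and $S'$ neither of whose secant varieties is all of $Y$. The paper spends all of Section 6 excluding this configuration, and the smoothness of ${\rm Sing}(Y)$ enters precisely to forbid distinct components from meeting (Lemma \ref{L.3}). Any completed version of your reconstruction would still have to confront this dichotomy---for instance, explain why the rank-one locus of the algebra you build is irreducible---and nothing in your outline addresses it.
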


We would like to emphasize that ${\rm Sing}(Y)$ in  (a) is the reduced
algebraic set consisting of singular points of $Y$. In particular, the natural scheme structure of the singular locus of $Y$
is not assumed to be nonsingular. If we replace it by the stronger requirement that the scheme structure is nonsingular, then the proof becomes much simpler, but the result will be less useful.

  The definition of $\Xi_Y^a$ and the condition $a \neq \frac{1}{4}$ in (b) may look technical.
  But  this  condition is checkable practically. In many concrete problems, when an element $A \in \aut(\widehat{Z})^{(1)}$ for a projective variety $Z \subset \BP W$ appears, we often have such additional information.

The proof of Theorem \ref{t.cubic} is done in two steps. The first step is to show that $Y$ is equal to
${\rm Sec}({\rm Sing}(Y))$, the secant variety of the singular locus of $Y$ (Theorem \ref{t.cubic1}). In this step, the condition $a \neq \frac{1}{4}$ in (b) is used crucially, while the assumption (a) is not used. This step involves a detailed study of the Gauss map of $Y$ and its relation to $\aut(\widehat{Y})^{(1)}.$
The second step
is to show under the assumption $Y = {\rm Sec}({\rm Sing}(Y))$,  that   there exists an irreducible component $S$ of ${\rm Sing}(Y)$ such that $Y= {\rm Sec}(S)$ (Theorem \ref{t.cubic2}). The second step does use the assumption (a). Once these two steps are established, Theorem \ref{t.cubic} follows easily from the classification results of \cite{FH12} and \cite{FH16}, which will be reviewed in Section \ref{s.FH}.

\begin{notation}\label{n} \begin{itemize}
\item[1]. For an algebraic variety $Z$, the singular locus ${\rm Sing}(Z)$ is the (reduced) algebraic subset consisting of singular points of $Z$ and the smooth locus ${\rm Sm}(Z)$ is its complement.
For a nonsingular point $x \in {\rm Sm}(Z)$, the (abstract) tangent space of $Z$ at $x$ is denoted by $T_x(Z)$.
\item[2]. Let $W$ be a vector space. For a projective variety $Z \subset \BP W$, its affine cone is denoted by $\widehat{Z} \subset W$. For a point $x \in \BP W$, the corresponding 1-dimensional subspace of $W$ is $\widehat{x} \subset W$. For a nonzero vector $w\in W$, the corresponding point of $\BP W$ is $[w] \in \BP W$ so that  $\C w = \widehat{[w]}.$  For a nonsingular point $w \in \widehat{Z}$, the affine tangent space of $\widehat{Z}$ at $w$ will be denoted by $T_w\widehat{Z}  \subset W.$
When $x = [w] \in \BP W$, we often write  by abuse of notation $\widehat{w}$ in place of $\widehat{x}$ and
$T_x\widehat{Z}$  in place of $T_w\widehat{Z}.$
\item[3]. For a projective variety $Z \subset \BP W$, the Lie algebra of infinitesimal automorphisms of $\widehat{Z}$ is
$$\aut(\widehat{Z}) := \{ \varphi \in {\rm End}(W), \ \varphi(w) \in T_w\widehat{Z} \mbox{ for each } w \in  {\rm Sm}(\widehat{Z}) \}.$$ The Lie algebra of infinitesimal automorphisms of $Z$ is $\aut(Z) = \aut(\widehat{Z}) \cap \fsl(W).$
\item[4]. For an algebraic subset $Z \subset \BP W$, the secant variety ${\rm Sec}(Z)$ is the closure of the union of the lines $\overline{xy}$ joining any pair of distinct points $x \neq y \in Z$. \end{itemize}
\end{notation}

\section{Characterizing Severi varieties in terms of prolongations}\label{s.FH}

This section is of auxiliary nature, somewhat independent from the rest of the paper.
 Our goal is to prove the following characterization of Severi varieties, which will be used in Section \ref{s.cubic2}.

\begin{theorem}\label{t.Severi}
 Let $S \subset \BP W$ be an irreducible nondegenerate nonsingular   subvariety with $\aut(\widehat{S})^{(1)} \neq 0.$  If the secant variety ${\rm Sec}(S) \subset \BP W$ is a
hypersurface, then $S \subset \BP V$ is one of the following four Severi varieties (all of them belonging to (i) of
Theorem \ref{t.FH12} below):
 $$v_2(\BP^2) \subset \BP^5, \ \BP^2 \times \BP^2 \subset \BP^8, \
{\rm Gr}(2, 6) \subset \BP^{14}, \
  \mbox{ and } \
\BO \BP^2 \subset \BP^{26}. $$
 \end{theorem}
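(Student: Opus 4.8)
The plan is to reduce the statement to the classification theorems of Fu--Hwang that are recalled in this section (Theorem \ref{t.FH12} and its refinements). The starting point is the hypothesis $\aut(\widehat S)^{(1)}\neq 0$ together with the geometric hypothesis that $\Sec(S)$ is a hypersurface. First I would invoke the general structure theory for varieties with nonzero prolongation: a nonsingular nondegenerate $S\subset \BP W$ with $\aut(\widehat S)^{(1)}\neq 0$ is very special — its prolongation being nonzero forces strong constraints on the second fundamental form, and in fact (by the Fu--Hwang classification of projective varieties with nonzero prolongation) $S$ must appear on their explicit list. So the real content is to intersect that list with the extra condition $\codim \Sec(S)=1$ and conclude that only the four Severi varieties survive.

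The key steps, in order, would be: (1) Recall from Section \ref{s.FH} the classification of irreducible nonsingular nondegenerate $S\subset\BP W$ with $\aut(\widehat S)^{(1)}\neq 0$; this splits into the homogeneous cases (case (i), which includes the four Severi varieties, the Legendrian-type varieties, and a handful of others) and certain non-homogeneous families. (2) For each entry on the list, compute (or quote) the dimension of the secant variety $\Sec(S)$. For the Severi varieties $v_2(\BP^2)\subset\BP^5$, $\BP^2\times\BP^2\subset\BP^8$, $\Gr(2,6)\subset\BP^{14}$, $\BO\BP^2\subset\BP^{26}$ one has $\dim\Sec(S)=2\dim S+1=\dim\BP W-1$, so $\Sec(S)$ is a hypersurface — this is the defining Severi property. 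For every other variety on the list I would check that the secant variety is either all of $\BP W$ (defectivity too small) or has codimension $\geq 2$; this is a finite check, using the known secant dimensions of the relevant homogeneous spaces (Scorza/Severi-type computations) and, for the non-homogeneous families, the explicit description of their embeddings given in \cite{FH12}, \cite{FH16}. (3) Conclude that the hypersurface condition singles out exactly the four Severi varieties, which all lie in case (i) of Theorem \ref{t.FH12}, giving the stated list.

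I expect the main obstacle to be step (2): one must be sure the secant-dimension bookkeeping is exhaustive across \emph{all} entries of the Fu--Hwang list, including any non-homogeneous or higher-Picard-rank cases and the Legendrian varieties (whose secants are typically the full ambient projective space, not a hypersurface), so that nothing slips through. Where the secant dimension is not already recorded in the cited papers, I would fall back on the general principle that a variety with large prolongation has a very degenerate second fundamental form, which controls the growth of the secant variety via Terracini's lemma: $T_{[\text{generic point of }\Sec(S)]}\Sec(S)$ is spanned by two general affine tangent spaces of $S$, and the failure of this span to fill $W$ is exactly governed by the rank of the second fundamental form. This lets me handle any remaining cases uniformly rather than one embedding at a time. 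The rest — verifying the four Severi varieties genuinely have nonzero prolongation and hypersurface secant — is classical and can simply be cited.
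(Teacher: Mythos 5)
Your overall strategy coincides with the paper's: run through the Fu--Hwang classification (Theorem \ref{t.FH12}) of nonsingular nondegenerate varieties with nonzero prolongation and decide, entry by entry, when the secant variety is a hypersurface. For cases (i)--(iv) your plan is exactly what the paper does: the dimension bookkeeping for the Hermitian symmetric VMRTs in (i) isolates the four Severi varieties, the symplectic-Grassmannian VMRTs in (ii) never give a hypersurface secant, and the linear sections in (iii) and (iv) have $\Sec(S)=\BP W$ (the paper quotes Zak's linear normality theorem for this).

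However, there is a genuine gap in your treatment of case (v), the biregular projections of the varieties in (i) and (ii), and this is precisely where the paper has to do real work. Case (v) is not a finite list of fixed embeddings: it is a family of projections $p_L(Z)$ parameterized by centers $\BP L\subset \BP V\setminus \Sec(Z)$. Whenever $Z$ is a variety in (i) or (ii) with $\codim \Sec(Z)\geq 2$, one can choose $L$ of dimension $\dim \BP V-\dim\Sec(Z)-1$ (``submaximal'' in the paper's terminology) so that $\Sec(p_L(Z))$ \emph{is} a hypersurface in $\BP(V/L)$. So your claim that every non-Severi entry has secant variety of codimension $\neq 1$ is false as stated; what must be shown is that these submaximal projections have \emph{vanishing} prolongation and therefore never actually occur in case (v). This is the content of the paper's Proposition \ref{p.submaximal}: for each family in (i) and (ii) one computes $\dim L$ for a submaximal center from the secant-dimension data, and compares it with the upper bounds on $\dim L$ compatible with $\aut(\widehat{p_L(Z)})^{(1)}\neq 0$ established in Propositions 4.10--4.18 of \cite{FH12}; the bounds are strictly smaller, forcing the prolongation to vanish. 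Neither your ``finite check using the explicit description of the embeddings'' nor your Terracini/second-fundamental-form fallback supplies this comparison, so as written your argument does not rule out case (v).
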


The proof of Theorem \ref{t.Severi} is a straight-forward application of the classification of
irreducible nondegenerate nonsingular  subvarieties  $S \subset \BP W$ with $\aut(\widehat{S})^{(1)} \neq 0$, obtained in \cite{FH12} and \cite{FH16} (an error in \cite{FH12} has been corrected in \cite{FH16}, adding three more varieties that were overlooked in \cite{FH12}). The classification can be summarized as follows.

\begin{theorem}\label{t.FH12}[Theorem 7.13 of \cite{FH16}]
 Let $S \subset \BP W$ be an irreducible nondegenerate nonsingular subvariety with $\aut(\widehat{S})^{(1)} \neq 0.$ Then $S$ is projectively equivalent to one of the followings.
   \begin{itemize} \item[(i)] VMRT of an irreducible Hermitian symmetric space (explained in Example 4.4 of \cite{FH16})
   \item[(ii)] VMRT of a (both even and odd) symplectic Grassmannian (explained in Example 4.5 of \cite{FH16})
   \item[(iii)] a nonsingular linear section of ${\rm Gr}(2, \C^5) \subset \BP^9$ of dimension 4 or 5
       \item[(iv)] some nonsingular linear section of the Spinor variety $\BS_{5} \subset \BP^{15}$ of dimension 7, 8 or 9
           \item[(v)] certain biregular projections of varieties in (i) or (ii) (
           described in Section 4 of \cite{FH12}). \end{itemize}
               \end{theorem}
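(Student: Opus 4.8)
The plan is to reinterpret the nonvanishing of the first prolongation as the existence of a nontrivial positive part in an associated graded Lie algebra, and then to classify that algebra. Set $\fg_{-1} := W$, $\fg_0 := \aut(\widehat{S})$, and let $\fg_k$ for $k \geq 1$ be the $k$-th prolongation, i.e. the symmetric maps $S^{k+1}W \to W$ all of whose partial contractions lie in $\fg_{k-1}$, so that $\fg_1 = \aut(\widehat{S})^{(1)}$. The total space $\fg_\bullet = \bigoplus_{k \geq -1}\fg_k$ carries a graded Lie bracket (the Tanaka prolongation of $(\fg_{-1},\fg_0)$ with $\fg_{-1}$ abelian): $\fg_0$ acts on $\fg_{-1} = W$ tautologically, and $[A,w] = A_w \in \fg_0$ for $A \in \fg_1$, $w \in \fg_{-1}$. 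Because $\widehat{S}$ is a cone, ${\rm Id}_W \in \fg_0$ is the grading element, and the hypothesis $\aut(\widehat{S})^{(1)} \neq 0$ says exactly that $\fg_\bullet$ has nonzero degree-one part.

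First I would prove finiteness: using that $S$ is nonsingular and nondegenerate, show $\fg_k = 0$ for all large $k$, so $\fg_\bullet$ is finite-dimensional. The mechanism is that smoothness makes the second fundamental form $\mathrm{II}$ of $S$ an everywhere-defined system of quadrics, and the prolongation conditions rephrase as algebraic compatibilities between a putative element of $\fg_k$ and $\mathrm{II}$; nondegeneracy forces these compatibilities to fail once $k$ is large, terminating the tower. Here one must also exclude the genuinely flat situations, in which $\fg_0$ is already as large as $\fsl(W)$ or $\fsp(W)$ and the tower is infinite; these correspond to $S$ being a linear space and are ruled out by properness and nondegeneracy.

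With $\fg_\bullet$ finite-dimensional there is a clean dichotomy. If $\fg_\bullet$ is semisimple, then since its negative part is concentrated in degree $-1$ by construction, nondegeneracy of the Killing pairing $\fg_k \times \fg_{-k} \to \C$ forces $\fg_k = 0$ for all $k \geq 2$; hence $\fg_\bullet = \fg_{-1} \oplus \fg_0 \oplus \fg_1$ is a $|1|$-grading. Such gradings of simple Lie algebras are classified by cominuscule markings of the Dynkin diagram and correspond bijectively to irreducible Hermitian symmetric spaces, and $S$ is recovered as the closed $\fg_0$-orbit in $\BP(\fg_{-1})$, namely the VMRT of that symmetric space. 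This produces case (i).

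The main obstacle is everything that falls outside the semisimple case, i.e. the situation where $\fg_\bullet$ carries a nonzero radical. This is what occurs for the symplectic Grassmannians of (ii) and, more seriously, for the non-homogeneous examples (iii)--(v): the low-dimensional linear sections of $\Gr(2,\C^5)$ and of the spinor variety $\BS_5$, together with the biregular projections of the homogeneous models. For these one cannot read $S$ off from the structure theory of semisimple graded algebras; instead a largely case-by-case analysis is unavoidable, studying the osculating sequence and second fundamental form of each candidate directly, checking by explicit computation that a general such section or projection still carries a nonzero prolongation, and conversely eliminating all other candidates by bounding $\mathrm{II}$. I expect this to be by far the hardest and most error-prone part of the argument; it is precisely where the original proof in \cite{FH12} was incomplete and had to be repaired in \cite{FH16}, which restored the three overlooked linear sections of $\BS_5$ of dimensions $7$, $8$ and $9$.
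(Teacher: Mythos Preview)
The theorem you are attempting to prove is not proved in this paper at all: it is quoted verbatim as Theorem~7.13 of \cite{FH16} (with \cite{FH12}), and the present paper uses it as a black box in the proof of Theorem~\ref{t.Severi}. There is therefore no ``paper's own proof'' to compare your proposal against. If the assignment was to reproduce a proof that appears in \emph{this} paper, you have chosen the wrong statement.

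That said, your sketch is a fair high-level outline of the strategy actually carried out in \cite{FH12} and \cite{FH16}: interpret $\aut(\widehat S)^{(1)}\neq 0$ as the degree-one piece of a graded Lie algebra, reduce the semisimple case to the classification of $|1|$-gradings (yielding the Hermitian symmetric VMRTs), and handle the remaining cases by direct analysis. But as a proof it has substantial gaps. Your finiteness step is only a heuristic: smoothness and nondegeneracy alone do not obviously terminate the prolongation tower, and in the cited papers this is established via a delicate reduction involving the VMRT structure and results on linear normality, not by a bare second-fundamental-form argument. More seriously, you concede that the non-semisimple case ``requires a largely case-by-case analysis'' and then do not perform it; since this is exactly where (ii)--(v) arise and where the original argument was incomplete, your proposal as written is a plan rather than a proof.
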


To prove Theorem \ref{t.Severi}, it is convenient to introduce the following definition, a variation of Definition 4.20 of \cite{FH12}.

            \begin{definition}\label{d.submaximal}
            Let $Z \subset \BP V$ be a nondegenerate variety. A linear subspace
            $\BP L \subset \BP V \setminus {\rm Sec}(Z)$ is {\em submaximal} if under the projection $p_L: Z \to \BP (V/L)$, the secant variety of $p_L(Z)$ is a hypersurface in $\BP (V/L)$. In this case, $\dim L = \dim \BP V - \dim {\rm Sec}(Z) -1$. \end{definition}

            The following proposition is a slight variation of Theorem 4.21 of \cite{FH12}. The only difference is that `maximal' there is replaced by `submaximal' here.

            \begin{proposition}\label{p.submaximal}
            Let $Z \subset \BP V$ be one of the varieties in (i) and (ii) in Theorem \ref{t.FH12}  with $\dim {\rm Sec}(Z) \leq \dim \BP V -2$. Let $\BP L \subset \BP V \setminus {\rm Sec}(Z)$ be a submaximal linear subspace. Then $\aut(\widehat{p_L(Z)})^{(1)} = 0. $
            \end{proposition}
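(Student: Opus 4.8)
The plan is to run, with ``maximal'' replaced throughout by ``submaximal'', the argument proving Theorem 4.21 of \cite{FH12}; I describe its shape and indicate where the genuine work lies. Put $V' = V/L$ and $Z' = p_L(Z) \subset \BP V'$. As $\BP L$ is disjoint from ${\rm Sec}(Z)$ and $Z$ is nonsingular and nondegenerate, $p_L$ restricts to a biregular embedding of $Z$ onto $Z'$, with $Z'$ nonsingular and nondegenerate in $\BP V'$, and ${\rm Sec}(Z') = p_L({\rm Sec}(Z))$ has the same dimension as ${\rm Sec}(Z)$; by Definition \ref{d.submaximal} this equals $\dim \BP V' - 1$, and $\dim {\rm Sec}(Z) \le \dim \BP V - 2$ forces $\dim L \ge 1$, so $p_L$ is a genuine projection. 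Assume, toward a contradiction, that $\aut(\widehat{Z'})^{(1)} \ne 0$. One cannot simply quote the classification of Theorem \ref{t.FH12} at this point: its case (v) consists precisely of biregular projections of the varieties in (i) and (ii), so knowing that $Z'$ lies on that list does not by itself prevent $\aut(\widehat{Z'})^{(1)} \ne 0$; a direct computation is needed, and this is what Theorem 4.21 of \cite{FH12} supplies.

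Since $Z$ is the VMRT of an irreducible Hermitian symmetric space or of a symplectic Grassmannian, the prolongation $\aut(\widehat Z)^{(1)}$ of this homogeneous model is known explicitly: together with $\aut(\widehat Z) \subset \fgl(V)$ it generates a graded Lie algebra $\fg = \fg_{-2} \oplus \fg_{-1} \oplus \fg_0 \oplus \fg_1 \oplus \fg_2$ with $\fg_{-1} = V$, $\fg_0 = \aut(\widehat Z)$ and $\fg_1 = \aut(\widehat Z)^{(1)}$ (with $\fg_{\pm 2} = 0$ in the Hermitian case), an element $\xi \in \fg_1$, viewed in $\Hom(S^2 V, V)$, satisfying $\xi_{uv} = \tfrac12\bigl([[\xi,u],v] + [[\xi,v],u]\bigr)$ for $u,v \in \fg_{-1}$. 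The first step is to use the biregularity of $p_L|_Z$, together with the homogeneity of the local structure (second fundamental form, etc.) of $Z$, to see that every element of $\aut(\widehat{Z'})^{(1)}$ is induced via $p_L$ by an element $\xi \in \aut(\widehat Z)^{(1)} = \fg_1$ with $\xi_{uv} \in \widehat L$ whenever $u \in \widehat L$ and $v \in V$; such a $\xi$ descends to a well-defined element of $\Hom(S^2 V', V')$, which is the given prolongation of $Z'$. Thus the assumption yields a nonzero such $\xi$.

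The second step shows that no nonzero $\xi \in \fg_1$ with $\xi_{uv} \in \widehat L$ (for $u \in \widehat L$, $v \in V$) exists. Spelling this condition out in terms of the grading and letting $v$ run over $\fg_{-1}$, one obtains for each $u \in \widehat L$ a strong restriction tying the action of $[\xi,u] \in \fg_0$ on $\fg_{-1}$ to the subspace $\widehat L$; combined with the submaximality of $\BP L$ — which pins $\widehat L$ down to codimension $\dim {\rm Sec}(Z) + 2$ in $\fg_{-1}$ and, via disjointness from ${\rm Sec}(Z)$, places it off the secant cone — this forces $[\xi,u] = 0$ for all $u \in \widehat L$, whence, running through the finitely many types, $\xi = 0$; contradiction. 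The main obstacle is exactly this last, type-by-type representation-theoretic elimination — the symplectic-Grassmannian cases, in which $\fg_{-1}$ is a reducible $\fg_0$-module, being the most delicate — but it is already carried out in the proof of Theorem 4.21 of \cite{FH12}, and the one change here, ``submaximal'' for ``maximal'', affects only the dimension bookkeeping in this step and not the structure of the argument; hence $\aut(\widehat{p_L(Z)})^{(1)} = 0$.
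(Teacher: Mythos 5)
Your overall strategy --- rerun the proof of Theorem 4.21 of \cite{FH12} with ``submaximal'' in place of ``maximal'' --- is exactly the route the paper takes, and your first two paragraphs correctly locate where the input from \cite{FH12} enters (lifting an element of $\aut(\widehat{p_L(Z)})^{(1)}$ to an element $\xi$ of the graded algebra compatible with $L$). The gap is in your last sentence: you assert that passing from ``maximal'' to ``submaximal'' ``affects only the dimension bookkeeping'' and stop, but that bookkeeping \emph{is} the proof, and it is not automatic. The mechanism in \cite{FH12} is not that the compatibility condition forces $[\xi,u]=0$ and hence $\xi=0$ abstractly; it is numerical: Propositions 4.10 (iii), 4.11 (iii), 4.12 (iii) and 4.18 (iii) of \cite{FH12} give, for each family and each rank $r\ge 1$ of a nonzero element of $\aut(\widehat{p_L(Z)})^{(1)}$, an explicit upper bound on $\dim L$, and one must check that the submaximal value $\dim L=\dim\BP V-\dim{\rm Sec}(Z)-1$ strictly exceeds that bound.

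This check is genuinely stronger than the one already done in \cite{FH12}: the upper bounds are unchanged, their value at $r=0$ coincides with the \emph{maximal} $\dim L$, and the submaximal $\dim L$ is exactly one less, so what is needed is that the bound drops by at least $2$ (not merely $1$) for every admissible $r\ge 1$. That does not follow formally from the maximal case and must be verified family by family. For instance, for the Segre $\BP^{a-1}\times\BP^{b-1}$ the drop at $r=1$ is $a+b-5$, which is $\ge 2$ only because the hypothesis $\dim{\rm Sec}(Z)\le\dim\BP V-2$ excludes $a=b=3$ (the Severi case); similar marginal cases occur in the other families. The paper's proof consists precisely of tabulating the submaximal dimensions of $L$ in cases (I), (II), (III), (ii) and comparing them with the cited bounds. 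By omitting this comparison you have omitted the content of the proposition rather than proved it.
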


 For convenience,   we will prove  Theorem \ref{t.Severi} first, assuming  Proposition \ref{p.submaximal} whose proof will be given afterwards.

\begin{proof}[Proof of Theorem \ref{t.Severi}]
We will go through the list of varieties in Theorem \ref{t.FH12}, checking when
${\rm Sec}(S) \subset \BP W$ is a hypersurface.

 Note that the secant variety of a member in (iii) or (iv) in Theorem \ref{t.FH12} is equal to the ambient projective space. This can be seen, for example, by Zak's theorem on linear normality (see \cite{Za} Corollary II.2.11) : a nondegenerate nonsingular variety $S \subset \BP^m$ of dimension
$n$ satisfying $3n > 2(m-2)$ has ${\rm Sec}(S) = \BP^m$.

When $S$ is one of (i) in Theorem \ref{t.FH12}, we can list the dimension of $S$ and ${\rm Sec}(S)$ as follows from
Table p.446 of \cite{FH12}. (The roman numerals indicate the types of the Hermitian symmetric spaces.)

\begin{itemize}
\item[(I)] Segre $S= \BP^{a-1} \times \BP^{b-1} \subset \BP^{ab-1}$ and $\dim {\rm Sec}(S) = 2a+ 2b-5$
    \item[(II)] Pl\"ucker $S= {\rm Gr}(2, n) \subset \BP^{\frac{n^2-n-2}{2}}$ and $\dim {\rm Sec}(S) = 4n-11$
        \item[(III)] 2nd Veronese $S = v_2(\BP^{n-1}) \subset \BP^{\frac{n^2 + n -2}{2}}$ and $\dim {\rm Sec}(S) = 2n-2$
            \item[(IV)] $S= Q^{n-2} \subset \BP^{n-1}$ and $\dim {\rm Sec}(S) = n-1$
            \item[(V)] $S= \BS_5 \subset \BP^{15}$ and $\dim {\rm Sec}(S) = 15$
            \item[(VI)] $S = \BO \BP^2 \subset \BP^{26}$ and $\dim {\rm Sec}(S) = 25$
            \end{itemize}
            From the above list, the only cases when ${\rm Sec}(S)$ is a hypersurface are
            exactly the four Severi varieties.

            When $S$ is one of (ii) in Theorem \ref{t.FH12},   there are positive integers $k$ and $m$ (see Lemma 4.19 of \cite{FH12}) such that $$S \subset \BP^{\frac{k^2+k}{2} + mk -1} \mbox{  and }  \dim {\rm Sec}(S) = 2m+2k -2.$$ So ${\rm Sec}(S)$ cannot be a hypersurface.

       Finally, when  $S \subset \BP W$ is one of (v) in Theorem \ref{t.FH12},
       Proposition \ref{p.submaximal} says that ${\rm Sec}(S) \subset \BP W$ is not a hypersurface. \end{proof}

                   \begin{proof}[Proof of Proposition \ref{p.submaximal}]
                     The proof is essentially the same as the proof of Theorem 4.21 of \cite{FH12}.
            The dimension $$\dim L = \dim \BP V - \dim {\rm Sec}(S) -1$$ of a submaximal $L \subset V$  in each of the cases of (i) and (ii)
              can be computed from the dimension information listed in the proof of
               Theorem \ref{t.Severi}. We list them below.
\begin{itemize} \item[(I)]   $\dim L =   ab - 2a-2b +3$  \item[(II)] $\dim L =  \frac{(n^2-n-2)}{2} - 4n+10$
\item[(III)] $\dim L = \frac{(n^2 + n  -2)}{2} - 2n +1$ \item[(ii)] $\dim L = mk + \frac{k(k+1)}{2} -2m -2k$\end{itemize}
            For the varieties in (i) and  (ii),
            an upper bound on $\dim L$ under the condition that $\aut(\widehat{p_L(Z)})^{(1)} \neq 0$ is given in Propositions 4.10 (iii), 4.11 (iii), 4.12 (iii) and 4.18 (iii) of \cite{FH12}. We cite them below, where  the rank $r$ of an element of $\aut(\widehat{p_L(Z)})^{(1)}$ must be at least 1 if  $\aut(\widehat{p_L(Z)})^{(1)}$ is nonzero.
            \begin{itemize}
            \item[(I)] $\dim L \leq ab -2(a+b) + 4 - r(a +b -r-4)$
            \item[(II)] $\dim L \leq \frac{n(n-1)}{2} - 4n + 10 - \frac{r(2n-r-9)}{2}$
            \item[(III)] $\dim L \leq \frac{n(n+1)}{2} -2n+1 - \frac{r(2n-r-3)}{2}$
            \item[(ii)] $\dim L \leq mk + \frac{k(k+1)}{2} - 2m-2k +1 - \frac{r(2m+2k-r-3)}{2}$ \end{itemize}
                If $r \geq 1$, these dimensions are strictly smaller than the dimensions
                in the previous list for the submaximal cases.
                       Thus  $\aut(\widehat{p_L(Z)})^{(1)}$ must vanish when $\BP L$ is submaximal.
                \end{proof}

\section{Basic results on cubic hypersurfaces}\label{s.basic}
In this section, we present  some basic definitions and results on cubic hypersurfaces. Throughout, we fix an irreducible nonzero cubic form $f\in \Sym^3 W^*$ on a vector space $W$:
$$ f(u, v, w) = f(u, w, v) = f(w, u, v) \mbox{ for all } u, v, w \in W,$$ and denote by  $Y  \subset \BP W$ the cubic hypersurface whose affine cone is   $$\widehat{Y} = \{ w \in W, \ f(w, w, w) =0\}.$$

\begin{definition}\label{d.cubic}
 For $v, w \in W$, define $f_w \in \Sym^2 W^*$  and $f_{vw} \in W^*$ respectively by $$f_w (u, s) := f(w, u, s) \mbox{ for all } u, s \in W $$ $$f_{vw}(u) := f (v, w, u) \mbox{ for all } u \in W.$$ The {\em null space} of the quadratic form $f_w$ is
\begin{eqnarray*} {\rm Null}( f_w) & :=  &\{ v \in W, \ f_{wv} =0\} \\  &=& \{ v \in W, \ f(w,v,u) =0 \mbox{ for all } u \in W\}. \end{eqnarray*}  The {\em Gauss space}
of $w \in W$ is defined by $$ \Gamma_w := \{ v \in W, f_{vw} \in \C \cdot f_{ww} \subset W^* \}.$$ When $w \neq 0$ and $x= [w] \in \BP W$, we will sometimes write $\Gamma_w$ as $\Gamma_x$ by abuse of notation.\end{definition}

  We will skip the proof of the following elementary lemma.

   \begin{lemma}\label{l.Sing}
 The singular locus ${\rm Sing}(Y) \subset Y$ is  the algebraic subset defined by the affine cone $$ {\rm Sing}(\widehat{Y})=  \widehat{{\rm Sing}(Y)} = \{ w \in W, \ f_{ww} = 0 \} $$ and its  secant variety ${\rm Sec}({\rm Sing}(Y))$ is contained in $Y$.  For $w \in {\rm Sm}(\widehat{Y}),$ the affine tangent space is given by $$
T_w\widehat{Y} =  \{ v \in W, f_{ww}(v) =0\}.$$ \end{lemma}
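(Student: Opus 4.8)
The plan is to unwind everything via the Jacobian criterion. Write $F\in\Sym^3 W^*$ for the cubic polynomial $F(w):=f(w,w,w)$, so $\widehat Y=\{F=0\}$. By multilinearity and symmetry of $f$, the differential of $F$ at a point $w\in W$ is the linear functional $v\mapsto 3f(w,w,v)=3f_{ww}(v)$; that is, $dF_w=3f_{ww}\in W^*$. Consequently, for $w\in{\rm Sm}(\widehat Y)$ one has $dF_w\neq 0$ by definition of a smooth point of a hypersurface, and the affine tangent space $T_w\widehat Y$ is $\ker dF_w=\{v\in W: f_{ww}(v)=0\}$, which is the asserted formula. (Euler's relation $dF_w(w)=3F(w)=0$ shows $w\in\ker dF_w$, as it must for a cone.)

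For the singular locus, I would use that $f$ is irreducible, so $F$ is square-free and $\widehat Y=\{F=0\}$ is the reduced hypersurface it cuts out; then by the Jacobian criterion a point $[w]\in\BP W$ lies in ${\rm Sing}(Y)$ exactly when $dF_w=0$, that is, $f_{ww}=0$. Note that $f_{ww}=0$ already forces $F(w)=f_{ww}(w)=0$, so $\{w\in W: f_{ww}=0\}\subset\widehat Y$ automatically. This identifies $\widehat{{\rm Sing}(Y)}$ with the cone $\{w\in W: f_{ww}=0\}$, as claimed, and of course ${\rm Sing}(Y)\subset Y$ is tautological.

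For the secant containment, I would take distinct points $[u]\neq[v]$ of ${\rm Sing}(Y)$, so $f_{uu}=f_{vv}=0$, and expand
$$F(su+tv)=s^3 f(u,u,u)+3s^2 t\,f(u,u,v)+3st^2 f(u,v,v)+t^3 f(v,v,v)$$
by multilinearity. Each term is a value of $f_{uu}$ or of $f_{vv}$ — for instance $f(u,u,v)=f_{uu}(v)$ and $f(u,v,v)=f_{vv}(u)$, and likewise $f(u,u,u)=f_{uu}(u)$, $f(v,v,v)=f_{vv}(v)$ — and hence vanishes, so the whole line $\overline{[u][v]}$ lies in $Y$. Since $Y$ is closed, the closure of the union of all such secant lines remains in $Y$, giving ${\rm Sec}({\rm Sing}(Y))\subset Y$.

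I do not expect any genuine obstacle: the only step needing a little care is invoking the Jacobian criterion in the right form — that the singular locus of a hypersurface defined by an irreducible (hence square-free) polynomial is precisely the common zero locus of the polynomial and its first partial derivatives — which is where the standing irreducibility assumption on $f$ enters; everything else is a direct multilinear computation.
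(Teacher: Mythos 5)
Your proof is correct; the paper explicitly skips the proof of this lemma as elementary, and your argument (Euler/polarization identity for the differential, the Jacobian criterion using irreducibility of $f$ to ensure the defining polynomial is reduced, and the four-term multilinear expansion of $F(su+tv)$ for the secant containment) is exactly the standard computation the authors had in mind.
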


  \begin{definition}\label{d.polar}
  The  {\em polar map} of $f$ (or $Y$) is the rational map $\Phi: \BP W \dasharrow \BP W^*$ defined for each $w \in  W \setminus  \widehat{  {\rm Sing}(Y)}$ by     $ \Phi ([w])  := [f_{ww}].$ From Lemma \ref{l.Sing}, the base locus of $\Phi$ is exactly ${\rm Sing}(Y).$ \end{definition}

\begin{lemma}\label{l.Phi}
In Definition \ref{d.cubic} and Definition \ref{d.polar}, we have the following.  \begin{itemize} \item[(1)]   For any $w \in W$, the Gauss space $\Gamma_w$ is spanned by  $ \widehat{w}$ and $ {\rm Null}(f_w)$. If $w
 \in {\rm Sm}(\widehat{Y})$, then $\Gamma_w \subset T_w\widehat{Y}$.
 \item[(2)] If $w \not\in \widehat{{\rm Sing}(Y)}$, then $w \not\in {\rm Null}(f_w)$.
\item[(3)] For $x \not\in {\rm Sing}(Y),$
consider   the derivative ${\rm d}_x \Phi: T_x(\BP W) \to T_{\Phi(x)}(\BP W^*)$. In terms of the natural identification $T_x(\BP W) = \Hom (\widehat{x}, W/\widehat{x})$, we have
$${\rm Ker} ({\rm d}_x \Phi) = \Hom (\widehat{x}, \Gamma_x/\widehat{x}).$$
\item[(4)] The polar map $\Phi$ is dominant  if and only if ${\rm Null}(f_w) =0$ for a general $w \in W$. \end{itemize} \end{lemma}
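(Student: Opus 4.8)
The plan is to establish the four items essentially independently, all by direct manipulation of the cubic form $f$ and its partial polarizations, using only Lemma \ref{l.Sing}.

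For (1), I would first recall that by Definition \ref{d.cubic}, $v \in \Gamma_w$ means $f_{vw} = c f_{ww}$ for some $c \in \C$. Writing $v' = v - c w$, one computes $f_{v'w} = f_{vw} - c f_{ww} = 0$, so $v' \in {\rm Null}(f_w)$ and $v \in \C w + {\rm Null}(f_w)$; conversely every such combination lies in $\Gamma_w$ since $f_{ww}$ itself and elements of ${\rm Null}(f_w)$ obviously satisfy the defining condition. This gives $\Gamma_w = \widehat{w} + {\rm Null}(f_w)$. For the inclusion $\Gamma_w \subset T_w \widehat Y$ when $w \in {\rm Sm}(\widehat Y)$: by Lemma \ref{l.Sing}, $T_w\widehat Y = \{v : f_{ww}(v) = 0\} = \{v : f(w,w,v)=0\}$. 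For $v \in \Gamma_w$ with $f_{vw} = c f_{ww}$, evaluate both sides at $w$: $f(v,w,w) = c f(w,w,w) = 0$ since $w \in \widehat Y$, so $v \in T_w\widehat Y$.

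Item (2) is the statement that if $f_{ww} \neq 0$ (i.e.\ $w \notin \widehat{{\rm Sing}(Y)}$ by Lemma \ref{l.Sing}) then $w \notin {\rm Null}(f_w)$; but $w \in {\rm Null}(f_w)$ would mean $f_{ww} = f_{wv}|_{v=w}$... more precisely $w \in {\rm Null}(f_w)$ means $f(w,w,u) = 0$ for all $u$, i.e.\ $f_{ww} = 0$, contradiction — so this is immediate. For (4), $\Phi$ is dominant iff its generic fiber is $0$-dimensional iff by (3) $\Gamma_w = \widehat w$ for general $w$ iff, by (1), ${\rm Null}(f_w) \subset \widehat w$ for general $w$; combined with (2), which says $w \notin {\rm Null}(f_w)$ generically, this forces ${\rm Null}(f_w) = 0$ for general $w$, and conversely ${\rm Null}(f_w) = 0$ gives $\Gamma_w = \widehat w$, hence dominance. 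So (2) and (4) are short once (1) and (3) are in hand.

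The main work — and the step I expect to be the real obstacle — is (3), the computation of $\Ker({\rm d}_x\Phi)$. Here I would pick a representative $w$ with $x = [w]$, so $\Phi(x) = [f_{ww}]$, and compute the derivative of the map $w \mapsto f_{ww}$ from $W$ to $W^*$: it sends a tangent vector $u$ to $2 f_{wu} \in W^*$ (since $\tfrac{d}{dt}f_{(w+tu)(w+tu)} = 2 f_{wu}$). Passing to projective spaces, ${\rm d}_x\Phi$ identified with $\Hom(\widehat x, W/\widehat x) \to \Hom(\widehat x \cdot \text{(stuff)})$... concretely, a class $u \bmod \widehat x$ maps to $2 f_{wu} \bmod \C f_{ww}$. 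Thus $u \bmod \widehat x \in \Ker({\rm d}_x\Phi)$ iff $f_{wu} \in \C f_{ww}$, i.e.\ iff $u \in \Gamma_x$ by definition of the Gauss space, i.e.\ iff $u \bmod \widehat x \in \Gamma_x/\widehat x$. The care needed is in checking that the projectivization/affinization bookkeeping — the identification $T_x(\BP W) = \Hom(\widehat x, W/\widehat x)$ and the corresponding one for $\BP W^*$ at $\Phi(x)$ — is set up so that the quotient by $\C f_{ww}$ appears correctly; this is routine but must be done carefully, and it is where an incautious argument could slip. Once (3) is verified, the lemma is complete.
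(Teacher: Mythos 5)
Your proposal is correct and follows essentially the same route as the paper: the same direct polarization computations for (1) and (2), the same expansion $f_{w+tv,\,w+tv}=f_{ww}+2tf_{wv}+t^2f_{vv}$ identifying ${\rm d}_x\Phi$ with $v\mapsto 2f_{wv}\bmod \C f_{ww}$ for (3), and the same chain of equivalences (dominance $\Leftrightarrow$ generic injectivity of ${\rm d}_x\Phi$ $\Leftrightarrow$ $\Gamma_x=\widehat{x}$ $\Leftrightarrow$ ${\rm Null}(f_w)=0$, using (1) and (2)) for (4). The projectivization bookkeeping you flag in (3) is handled in the paper exactly as you describe, via $T_{\Phi(x)}=\Hom(\widehat{f_{ww}},W^*/\widehat{f_{ww}})$.
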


\begin{proof}
For any $v \in \Gamma_w$, we have $f_{vw} = c \ f_{ww}$ for some $c \in \C$. Thus $f(w, cw-v, u) = 0$ for all $u \in W$.
It follows that $cw -v \in {\rm Null}(f_w)$. Conversely, if $v \in {\rm Null}(f_w)$, then $$f_{w, w+v} = f_{ww} + f_{wv}  = f_{ww}.$$
Thus $w+v \in \Gamma_w.$  When $x$ is a nonsingular point of $\widehat{Y}$, it is obvious from $$T_w(\widehat{Y}) = \{ v \in W, \ f(w,w, v) =0\}$$ that $\widehat{w}$ and ${\rm Null}(f_w)$ are contained in $T_w(\widehat{Y})$. This proves (1).

If $w \in {\rm Null}(f_w)$, then $f_{ww} =0$. Thus $[w] \in {\rm Sing}(Y)$, proving (2).

To prove (3), pick $ v \in W$ and let $\vec{v} \in \Hom(\widehat{x}, W/\widehat{x})$ be the homomorphism sending some nonzero $w \in \widehat{x}$ to $v$.
Then for $t \in \C$ close to 0,  $$f_{w+tv, w+tv} = f_{ww} + 2 t f_{wv} + t^2 f_{vv}$$ shows that ${\rm d}_x \Phi(\vec{v})$ is the element of $$T_{\Phi(x)} = \Hom( \widehat{f_{ww}}, W^*/\widehat{f_{ww}})$$ sending $f_{ww}$ to $2 f_{wv}$. It follows that $\vec{v} \in {\rm Ker} ({\rm d}_x \Phi)$ if and only if $f_{wv} \in \widehat{f_{ww}}$, which is equivalent to saying $v \in \Gamma_x$.

For (4), note that $\Phi$ is dominant if and only if ${\rm Ker} ({\rm d}_x \Phi)=0$ for a general $x \in \BP W$.
By (3), this is equivalent to saying that $\Gamma_x = \widehat{x}$ for a general $x$. By (1) and (2), this is equivalent to
${\rm Null}(f_w) =0$ for a general $w \in W$. \end{proof}

\begin{definition}\label{d.Hessian}
In the setting of Lemma \ref{l.Phi},
we say that $Y$ (or $f$) {\em has nonzero Hessian}, or equivalently, is a {\em cubic with nonzero Hessian}, if the two equivalent conditions in Lemma \ref{l.Phi} (4) hold. \end{definition}

We will skip the proof of the following elementary lemma.

\begin{lemma}\label{l.Aut} In Definition \ref{d.cubic}, we have the following.
\begin{itemize} \item[(1)] The Lie algebra $\aut(\widehat{Y}) $ of infinitesimal automorphisms of $\widehat{Y} \subset W$
       consists of endomorphisms $ \varphi \in {\rm End}(W)$ satisfying $$ f(\varphi (u), v, w) + f(u, \varphi (v), w) + f(u, v, \varphi(w)) = \chi(\varphi) \ f(u, v, w) $$ for all $u,v,w \in W$ and for some $ \chi(\varphi) \in \C.$
Note that $\chi$ defines a linear functional $\chi: \aut(\widehat{Y}) \to \C$.

\item[(2)] For an element $A \in  \Hom (S^2 W, W)$, denote by $A_w \in {\rm End}(W)$ the endomorphism defined by $A_w (u) := A_{wu}$ and  denote by $A_{wv} = A_{vw} \in W$ its value at $v \in W$.
For each $A \in \aut(\widehat{Y})^{(1)}$, denote by $\chi^A \in W^* $ the linear functional defined by $\chi^A (w) := \chi (A_w). $
Then $A \in \aut(\widehat{Y})^{(1)}$ satisfies
$$f(A_{wu}, v, s) + f(u, A_{wv}, s) + f( u, v, A_{ws}) = \chi^A(w) f (u, v, s)$$
for each $s, u, v, w \in W$.
\end{itemize} \end{lemma}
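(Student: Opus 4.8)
The plan is to reduce both parts to a single polarization argument that rests on the irreducibility of the cubic $f(\cdot,\cdot,\cdot)$. For part (1), I would first rewrite the defining condition of $\aut(\widehat{Y})$: by Lemma \ref{l.Sing}, for $w\in{\rm Sm}(\widehat{Y})$ one has $T_w\widehat{Y}=\{v\in W:\ f(w,w,v)=0\}$, so by Notation \ref{n}.3 the condition $\varphi\in\aut(\widehat{Y})$ is exactly $f(\varphi(w),w,w)=0$ for all $w\in{\rm Sm}(\widehat{Y})$. Since $w\mapsto f(\varphi(w),w,w)$ is polynomial and ${\rm Sm}(\widehat{Y})$ is Zariski dense in the irreducible hypersurface $\widehat{Y}$, this is equivalent to $f(\varphi(w),w,w)=0$ for every $w$ with $f(w,w,w)=0$.

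The next step is to introduce the trilinear form
$$ g_\varphi(u,v,s) := f(\varphi(u),v,s) + f(u,\varphi(v),s) + f(u,v,\varphi(s)), $$
which is symmetric in $u,v,s$ because $f$ is, and whose associated cubic form is $g_\varphi(w,w,w)=3\,f(\varphi(w),w,w)$. Thus $\varphi\in\aut(\widehat{Y})$ if and only if the cubic $g_\varphi(w,w,w)$ vanishes on $\widehat{Y}=\{f(w,w,w)=0\}$. Because $f(w,w,w)$ is an irreducible cubic form, the ideal of $\widehat{Y}$ is the principal prime ideal it generates, so — comparing degrees — the vanishing of $g_\varphi(w,w,w)$ on $\widehat{Y}$ is equivalent to $g_\varphi(w,w,w)=\chi(\varphi)\,f(w,w,w)$ for a scalar $\chi(\varphi)\in\C$, unique since $f\neq 0$; and by the standard polarization identity for symmetric trilinear forms in characteristic $0$, this equality of cubic forms is equivalent to the asserted identity $g_\varphi=\chi(\varphi)f$. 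This proves the characterization in (1). Linearity of $\chi$ is then formal: the assignment $\varphi\mapsto g_\varphi$ is $\C$-linear, so for $\varphi_1,\varphi_2\in\aut(\widehat{Y})$ and $c\in\C$ one has $g_{\varphi_1+c\varphi_2}=g_{\varphi_1}+c\,g_{\varphi_2}=(\chi(\varphi_1)+c\,\chi(\varphi_2))f$, and uniqueness of the proportionality constant forces $\chi(\varphi_1+c\varphi_2)=\chi(\varphi_1)+c\,\chi(\varphi_2)$.

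Part (2) is then immediate. Given $A\in\aut(\widehat{Y})^{(1)}$, Definition \ref{d.prolong} says $A_w\in\aut(\widehat{Y})$ for every $w\in W$, so applying part (1) with $\varphi=A_w$ gives
$$ f(A_w(u),v,s) + f(u,A_w(v),s) + f(u,v,A_w(s)) = \chi(A_w)\,f(u,v,s) $$
for all $u,v,s\in W$; substituting $A_w(u)=A_{wu}$, $A_w(v)=A_{wv}$, $A_w(s)=A_{ws}$ and $\chi(A_w)=\chi^A(w)$ yields the displayed formula. Finally $\chi^A$ is a linear functional because $w\mapsto A_w$ is linear (as $A\in\Hom(S^2W,W)$) and $\chi$ is linear by part (1).

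I do not expect a serious obstacle here; the only points requiring a little care are the Zariski-density argument that allows one to pass from ${\rm Sm}(\widehat{Y})$ to all of $\widehat{Y}$, and the invocation of irreducibility of $f$ (equivalently, of $Y$) to identify the ideal of $\widehat{Y}$, which is what upgrades ``vanishing on $\widehat{Y}$'' to ``being a scalar multiple of $f$'' rather than merely lying in a radical.
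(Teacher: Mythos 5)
Your proof is correct. The paper explicitly skips the proof of this lemma as elementary, and your argument — translating the tangent-space condition via Lemma \ref{l.Sing}, using irreducibility of $f$ to identify the ideal of $\widehat{Y}$ with $(f)$ so that a cubic vanishing on $\widehat{Y}$ is a scalar multiple of $f$, and then polarizing — is precisely the standard argument the author intends the reader to supply.
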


\begin{proposition}\label{p.injective}
 If $Y$ has nonzero Hessian, then the linear homomorphism $\aut(\widehat{Y})^{(1)} \to W^*$   given by $A \mapsto \chi^A$ is injective.
 \end{proposition}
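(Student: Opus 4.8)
The plan is to show that if $A \in \aut(\widehat{Y})^{(1)}$ has $\chi^A = 0$, then $A = 0$, using the nonzero Hessian hypothesis through Lemma \ref{l.Phi} (4), i.e. $\Null(f_w) = 0$ for general $w$. Suppose $\chi^A = 0$. Then by Lemma \ref{l.Aut} (2), for all $s, u, v, w \in W$ we have
$$ f(A_{wu}, v, s) + f(u, A_{wv}, s) + f(u, v, A_{ws}) = 0. $$
First I would specialize this to extract information about $A_{ww}$. Setting $u = v = w$ gives $3 f(A_{ww}, w, s) = 0$ for all $s$, i.e. $f_{w, A_{ww}} = 0$, which says $A_{ww} \in \Null(f_w)$ for every $w \in W$. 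Since $Y$ has nonzero Hessian, $\Null(f_w) = 0$ for $w$ in a dense open set, so $A_{ww} = 0$ on that set, and hence $A_{ww} = 0$ identically by continuity. Polarizing the identity $A_{ww} = 0$ then yields $A_{uv} = 0$ for all $u, v$, i.e. $A = 0$.

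The one point that needs a little care is the polarization step combined with the genericity: $A_{ww} \in \Null(f_w)$ holds for \emph{all} $w$, but $\Null(f_w) = 0$ only generically, so I first conclude $A_{ww} = 0$ for general $w$ and then for all $w$ by Zariski density of the open set together with the fact that $w \mapsto A_{ww}$ is a polynomial (quadratic) map $W \to W$. Once $A_{ww} = 0$ for all $w$, replacing $w$ by $w + tv$ and comparing coefficients of $t$ (or directly: the associated symmetric bilinear map vanishes on the diagonal, hence vanishes, since we are in characteristic zero) gives $A_{wv} = 0$ for all $w, v$.

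I expect no serious obstacle here; the statement is a direct consequence of the defining identity in Lemma \ref{l.Aut} (2) together with the nonzero-Hessian characterization in Lemma \ref{l.Phi} (4). The only thing to be vigilant about is not to conflate the generic vanishing of $\Null(f_w)$ with pointwise vanishing — the passage is legitimate precisely because $A_{ww}$ depends polynomially on $w$ and $\widehat{Y}$ is assumed to have nonzero Hessian, so the locus $\{w : \Null(f_w) = 0\}$ is a nonempty (hence dense) Zariski open subset of $W$.
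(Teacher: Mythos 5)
Your overall strategy is exactly the paper's: show $\chi^A=0$ forces $A_{ww}\in\Null(f_w)$, invoke the nonzero--Hessian condition to get $A_{ww}=0$ for general (hence all) $w$, and polarize. The tail of your argument (Zariski density plus polynomiality of $w\mapsto A_{ww}$, then polarization in characteristic zero) is fine. But the key algebraic step is wrong as you state it. Substituting $u=v=w$ into
$$f(A_{wu}, v, s) + f(u, A_{wv}, s) + f(u, v, A_{ws}) = 0$$
gives
$$2\, f(A_{ww}, w, s) + f(w, w, A_{ws}) = 0,$$
not $3\,f(A_{ww},w,s)=0$: the third term involves $A_{ws}$, which is a different vector from $A_{ww}$, so the three summands do not coincide. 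As written, your single substitution does not yield $f_{w,A_{ww}}=0$.

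The missing ingredient is a second specialization of the same identity, using the symmetry $A_{ws}=A_{sw}$ of the prolongation. Apply the identity with base point $s$ and all three slots equal to $w$:
$$f(A_{sw}, w, w) + f(w, A_{sw}, w) + f(w, w, A_{sw}) = \chi^A(s)\, f(w,w,w) = 0,$$
which gives $3\,f(w,w,A_{ws})=0$. Feeding $f(w,w,A_{ws})=0$ back into the first displayed equation yields $f(A_{ww},w,s)=0$ for all $s$, i.e.\ $A_{ww}\in\Null(f_w)$, and the rest of your argument then goes through. This two-step combination is precisely what the paper's proof does; with that correction your proposal coincides with it.
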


 \begin{proof}
Assume that $\chi^A = 0$. By Lemma \ref{l.Aut} (2), for all $w, u \in W$,
$$f(A_{ww}, w, u) + f(w, A_{ww}, u) + f(w,w, A_{wu}) = 0,$$ which implies $$2 \ f(A_{ww}, w, u) = - f (w, w, A_{wu}).$$
On the other hand, $$0 = f(A_{uw}, w, w) + f(w, A_{uw}, w) + f(w, w, A_{uw}) = 3 f(w, w, A_{wu}).$$ It follows that $f( A_{ww}, w, u) =0$
for all  $u \in W$, i.e., $$A_{ww} \in {\rm Null} (f_w).$$ But ${\rm Null}(f_w) =0$ for a general $w \in W$ because $Y$ has nonzero Hessian. Thus $A_{ww} = 0$ for a general $w$. It follows that $A_{ww} = 0$ for all $w \in W$.
\end{proof}

\begin{proposition}\label{p.Gamma}
Let $Y \subset \BP W$ be as in Lemma \ref{l.Aut}.
  For any $A \in \aut(\widehat{Y})^{(1)}$ and any $w \in \widehat{Y}$, we have $A_{ww} \in \Gamma_w.$ More precisely,
    $$ f(A_{ww}, w, u) = \frac{\chi^A(w)}{2} f(w,w, u) \mbox{ for all } u \in W,$$  which implies $f_{A_{ww}, w} \in \C \cdot f_{ww}$.
\end{proposition}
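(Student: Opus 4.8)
The plan is to deduce both assertions directly from the identity in Lemma \ref{l.Aut} (2), specialized carefully. The two displayed equations we must establish are
$f(A_{ww}, w, u) = \tfrac{\chi^A(w)}{2} f(w,w,u)$ for all $u \in W$, together with $A_{ww} \in \Gamma_w$ when $w \in \widehat{Y}$; note the second is an immediate consequence of the first, since the first says $f_{A_{ww},w} = \tfrac{\chi^A(w)}{2} f_{ww}$ as elements of $W^*$, and this membership $f_{A_{ww},w} \in \C \cdot f_{ww}$ is exactly the definition of $A_{ww} \in \Gamma_w$ (Definition \ref{d.cubic}), valid for \emph{any} $w$, not just $w \in \widehat{Y}$.

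So the whole proposition reduces to the single identity $f(A_{ww},w,u) = \tfrac{\chi^A(w)}{2} f(w,w,u)$. First I would take the identity of Lemma \ref{l.Aut} (2),
$$ f(A_{wu}, v, s) + f(u, A_{wv}, s) + f(u, v, A_{ws}) = \chi^A(w)\, f(u,v,s), $$
and substitute $u = v = w$, giving
$$ f(A_{ww}, w, s) + f(w, A_{ww}, s) + f(w, w, A_{ws}) = \chi^A(w)\, f(w, w, s), $$
i.e. $2 f(A_{ww}, w, s) + f(w, w, A_{ws}) = \chi^A(w) f(w,w,s)$ for all $s \in W$. Next, to eliminate the term $f(w,w,A_{ws})$, I would use the same identity of Lemma \ref{l.Aut} (2) but with the roles arranged to exploit symmetry of $A$ and $f$: set $u = s = w$ and leave $v$ free, or equivalently apply the identity with base vector $s$ in the "prolongation slot" — concretely, using $A_{ws} = A_{sw}$ and feeding $u = v = w$ into the identity for $A_{s\bullet}$ gives
$$ f(A_{sw}, w, w) + f(w, A_{sw}, w) + f(w, w, A_{sw}) = \chi^A(s)\, f(w,w,w), $$
so $3 f(w, w, A_{ws}) = \chi^A(s) f(w,w,w)$. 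When $w \in \widehat{Y}$ we have $f(w,w,w) = 0$, hence $f(w,w,A_{ws}) = 0$ for all $s$. Substituting this back into the previous display yields $2 f(A_{ww},w,s) = \chi^A(w) f(w,w,s)$, which is the claimed identity.

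I do not anticipate a serious obstacle here; the argument is a short manipulation of the defining identity, entirely parallel to the computation already carried out in the proof of Proposition \ref{p.injective} (which is the $\chi^A = 0$ case of the present statement). The only point requiring a moment's care is that the vanishing $f(w,w,A_{ws}) = 0$ uses $w \in \widehat{Y}$ — this is precisely why the sharper conclusion $A_{ww} \in \Gamma_w$ with the explicit constant $\chi^A(w)/2$ is asserted only for $w \in \widehat{Y}$, whereas without that hypothesis one only gets the weaker relation $2f(A_{ww},w,s) + f(w,w,A_{ws}) = \chi^A(w) f(w,w,s)$. I would close by remarking that, since $f(A_{ww},w,u) = \tfrac{\chi^A(w)}{2}f(w,w,u)$ for all $u$ means $f_{A_{ww},w} = \tfrac{\chi^A(w)}{2} f_{ww} \in \C \cdot f_{ww}$, the membership $A_{ww} \in \Gamma_w$ follows directly from the definition of the Gauss space.
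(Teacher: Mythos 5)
Your proof is correct and follows essentially the same route as the paper's: first deduce $f(w,w,A_{ws})=0$ from the Lemma \ref{l.Aut} (2) identity with the prolongation slot at $s$ and $f(w,w,w)=0$, then substitute into the identity with $u=v=w$ to isolate $f(A_{ww},w,s)=\frac{\chi^A(w)}{2}f(w,w,s)$. Your added remarks on where the hypothesis $w\in\widehat{Y}$ is used and on the reduction to the definition of $\Gamma_w$ are accurate.
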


\begin{proof}
By Lemma \ref{l.Aut} (2),
 $$ f(A_{uw}, w, w) + f(w, A_{uw}, w) + f(w, w, A_{uw}) = \chi^A(u) f(w, w, w) = 0,$$
 which gives  $f(w,w, A_{wu}) = 0.$
Putting it in
$$f(A_{ww}, w, u) + f(w, A_{ww}, u) + f(w, w, A_{wu}) = \chi^A(w) f(w, w, u), $$ we obtain $$ f(A_{ww}, w, u) = \frac{\chi^A(w)}{2} f(w,w, u).$$
 \end{proof}

\section{Prolongations and Gauss map of a cubic hypersurface}\label{s.II}

\begin{definition}\label{d.Gauss}
For an irreducible cubic hypersurface $Y \subset \BP W$, its {\em affine Gauss map}
$$\gamma_{\widehat{Y}}: \widehat{Y} \dasharrow W^*$$
associates to $w \in {\rm Sm}(\widehat{Y})$ its affine tangent space $T_w\widehat{Y} \subset W$. It induces the {\em Gauss map}
 $\gamma_Y: Y \dasharrow \BP W^*.$
\end{definition}

We have the following standard result.

\begin{proposition}\label{p.Gauss}
For an irreducible cubic hypersurface with nonzero Hessian,  we have the following.  \begin{itemize}
\item[(1)] The restriction of the polar map $\Phi|_Y: Y \dasharrow \BP W^*$ is the Gauss map $\gamma_Y$ of $Y$ and its proper image is the
dual variety $Y^* \subset \BP W^*$ of $Y$.
\item[(2)]
The fiber of $\gamma_{\widehat{Y}}$ through a general point $w \in \widehat{Y}$ coincides with $\Gamma_w$.
\item[(3)] Let $F \subset Y$ be a linear subspace contained in $Y$ such that
 $F \not\subset {\rm Sing}(Y)$ and $\gamma_Y(F \setminus {\rm Sing}(Y))$ is a single point. Then the set-theoretic intersection $F \cap {\rm Sing}(Y)$ is a hypersurface of degree $\leq 2$ in the linear space $F$.
     \end{itemize} \end{proposition}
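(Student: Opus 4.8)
The plan is to work inside the linear space $F \cong \BP^k$, with $F \subset Y$ and $\gamma_Y$ constant along the smooth locus of $F$, and to identify the scheme-theoretic constraint imposed on $F \cap {\rm Sing}(Y)$. First I would choose an affine picture: let $\widehat{F} \subset W$ be the linear subspace of the cone, pick coordinates so that $\widehat{F}$ is a coordinate subspace, and let $\xi \in W^*$ be a linear functional with $[\xi] = \gamma_Y(F \setminus {\rm Sing}(Y))$, the common Gauss image. Since $\Phi|_Y = \gamma_Y$ by part (1), for every $w \in \widehat{F} \setminus \widehat{{\rm Sing}(Y)}$ we have $f_{ww} = \lambda(w)\,\xi$ in $W^*$ for some scalar $\lambda(w)$. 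The key algebraic observation is that $w \mapsto f_{ww}$, restricted to $\widehat{F}$, is a $W^*$-valued quadratic form in $w$; hence $\lambda$ is a quadratic form on $\widehat{F}$, namely $\lambda(w)\xi = f(w,w,\cdot)|_{\text{as an element of }W^*}$ so that $\lambda(w) = f(w,w,e)/\xi(e)$ for any $e$ with $\xi(e)\neq 0$, showing $\lambda \in \Sym^2 \widehat{F}^*$.

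Next I would pin down the locus $F \cap {\rm Sing}(Y)$. By Lemma \ref{l.Sing}, $\widehat{{\rm Sing}(Y)} = \{w : f_{ww} = 0\}$, so inside $\widehat{F}$ the singular locus is $\{w \in \widehat{F} : f_{ww} = 0\}$. A priori this is cut out by all the coordinates of the $W^*$-valued quadratic form $w \mapsto f_{ww}$, i.e.\ by many quadrics. But along $\widehat{F}$ we have just shown $f_{ww} = \lambda(w)\,\xi$ with $\xi \neq 0$ a fixed covector; this identity holds on the dense open subset where $w \notin \widehat{{\rm Sing}(Y)}$, and both sides are polynomial (quadratic) in $w$, so it holds identically on $\widehat{F}$. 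Therefore $f_{ww} = 0$ $\iff$ $\lambda(w) = 0$, and $F \cap {\rm Sing}(Y)$ is exactly the zero locus in $F$ of the single quadratic form $\lambda$, hence a hypersurface of degree $\leq 2$ (the degenerate cases $\lambda = 0$, i.e.\ $F \subset {\rm Sing}(Y)$, being excluded by hypothesis, and $\lambda$ a nonzero linear form being allowed as "degree $\leq 2$").

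The main obstacle I anticipate is justifying the passage from "$f_{ww} \in \C\cdot\xi$ for each individual general $w$" to "$f_{ww} = \lambda(w)\xi$ with a globally defined quadratic $\lambda$": one must be careful that the scalar depends polynomially (quadratically) on $w$ and does not jump, and that $\xi$ can be chosen as a single fixed covector valid for all of $F$ rather than varying with $w$. The cleanest way is the argument above — evaluate against a fixed $e$ with $\xi(e) \neq 0$ to get the closed-form expression $\lambda(w) = f(w,w,e)/\xi(e)$, which is manifestly a quadratic polynomial on $\widehat{F}$, and then invoke density plus polynomiality to upgrade the pointwise identity $f_{ww} = \lambda(w)\xi$ to an identity of polynomial maps on all of $\widehat{F}$. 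A secondary point to check is that the hypothesis "$\gamma_Y(F\setminus{\rm Sing}(Y))$ is a single point" is exactly what guarantees $\xi$ is constant; without $F \not\subset {\rm Sing}(Y)$ the Gauss map would not even be defined on a dense subset of $F$, so both hypotheses are used precisely where expected.
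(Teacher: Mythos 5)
Your proposal is correct and is essentially the paper's own argument: the paper likewise uses constancy of the Gauss image to show $f(v,v,u)=0$ for all $v\in\widehat{F}$ and all $u$ in the common tangent hyperplane (extending from general $v$ by polynomiality), and then cuts $\widehat{F}\cap{\rm Sing}(\widehat{Y})$ by the single quadric $f(v,v,u')=0$ for one fixed $u'$ outside that hyperplane — which is exactly your $\lambda(v)=f(v,v,e)/\xi(e)$. The subtlety you flag (upgrading the pointwise proportionality to a polynomial identity on all of $\widehat{F}$) is handled the same way in the paper.
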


\begin{proof}
(1) is a direct consequence of $T_w \widehat{Y} = \{ v \in W, \ f_{ww} (v) = 0\}$ for $w \in {\rm Sm}(\widehat{Y})$.

(2) is a special case of a more general result, Theorem 2.4.6 in \cite{FP}.
In our situation, it is an immediate consequence of the classical fact that a general fiber of the Gauss map is a linear subspace (Linearity Theorem in Section 2.3.2 of \cite{FP}).
In fact, the assumption that $\Phi$ is dominant implies that the fiber of $\gamma_{Y}$ through a general point $[w] \in Y$ is equal to the fiber of $\Phi$ through $[w]$. By Lemma \ref{l.Phi} (3), the tangent to this fiber at $[w]$ corresponds to $\Gamma_w$. Thus the fiber of $\gamma_Y$ must be $\BP \Gamma_w$.

To prove (3),
fix a general $w \in \widehat{F}.$ We claim that $f(v,v, u) =0$ for any $v \in \widehat{F}$ and $u \in T_w\widehat{Y}$. It is sufficient to prove this claim for a general $v \in \widehat{F}$.
But $T_v\widehat{Y} = T_w\widehat{Y}$ by the assumption that $\gamma_Y([v]) = \gamma_Y([w])$.
Thus $f(v,v, u) =0$ from $u \in T_v \widehat{Y}= T_w \widehat{Y},$ proving the claim.

Now, fix a vector $u' \in W \setminus T_w\widehat{Y}$.
 Recall
$$\widehat{F} \cap {\rm Sing}(\widehat{Y}) = \{ v \in \widehat{F}, \ f(v,v,u) =0 \mbox{ for all }
u \in W \}.$$
By the above claim, this is equal to $$\{ v \in \widehat{F}, \ f(v, v, u') =0 \}.$$
This is a hypersurface in $\widehat{F}$ defined by a quadratic equation, from which (3) follows. \end{proof}

\begin{proposition}\label{p.varphi}
Let $Y \subset \BP W$ be as in Proposition \ref{p.Gauss} and pick a general point $w \in {\rm Sm}(\widehat{Y}).$
\begin{itemize} \item[(1)] If $\varphi \in {\rm aut}(\widehat{Y})$ satisfies
$\varphi(w) \in \Gamma_w$, then $\varphi(T_w \widehat{Y}) \subset T_w \widehat{Y}$.
\item[(2)] If $\varphi \in {\rm aut}(\widehat{Y})$ satisfies
$\varphi(w) \in \Gamma_w$, then $\varphi(\Gamma_w) \subset \Gamma_w$.
\item[(3)] If $A \in {\rm aut}(\widehat{Y})^{(1)}$, then $A_w (\Gamma_w) \subset \Gamma_w.$
\end{itemize}
\end{proposition}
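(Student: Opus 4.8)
The plan is to prove the three statements in sequence, each feeding into the next. For (1), fix a general $w$ and let $u \in T_w\widehat{Y}$, i.e. $f(w,w,u) = 0$; I want $f(w,w,\varphi(u)) = 0$. Applying the defining identity of $\aut(\widehat{Y})$ (Lemma \ref{l.Aut}(1)) to the triple $(w,w,u)$ gives
\[
2 f(\varphi(w), w, u) + f(w, w, \varphi(u)) = \chi(\varphi) f(w,w,u) = 0,
\]
so it suffices to show $f(\varphi(w), w, u) = 0$ for all $u \in T_w\widehat{Y}$. By hypothesis $\varphi(w) \in \Gamma_w$, so by Lemma \ref{l.Phi}(1) we may write $\varphi(w) = cw + n$ with $n \in \Null(f_w)$. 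Then $f(\varphi(w), w, u) = c\, f(w,w,u) + f(n, w, u) = 0 + 0 = 0$, using $f(w,w,u)=0$ and $f_{nw} = 0$. This proves (1).

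For (2), I use the characterization $\Gamma_w = \widehat{w} + \Null(f_w)$ from Lemma \ref{l.Phi}(1); since $\widehat{w} \subset \Gamma_w$ is handled by the hypothesis $\varphi(w) \in \Gamma_w$, it remains to show $\varphi(\Null(f_w)) \subset \Gamma_w$. Take $n \in \Null(f_w)$, so $f(w, n, u) = 0$ for all $u$; I want $f_{w, \varphi(n)} \in \C \cdot f_{ww}$. Apply the $\aut(\widehat{Y})$-identity to the triple $(w, n, u)$:
\[
f(\varphi(w), n, u) + f(w, \varphi(n), u) + f(w, n, \varphi(u)) = \chi(\varphi) f(w, n, u) = 0.
\]
The last term vanishes since $n \in \Null(f_w)$. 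For the first term, write again $\varphi(w) = cw + n'$ with $n' \in \Null(f_w)$; then $f(\varphi(w), n, u) = c\, f(w,n,u) + f(n', n, u) = f(n', n, u)$. So $f(w, \varphi(n), u) = - f(n', n, u)$ for all $u$. The point is that this must lie in $\C \cdot f_{ww}$; the natural way to see it is that $w$ is general, so the subspace $\Null(f_w)$ varies and the identity forces $f_{n', n}$, hence $f_{w,\varphi(n)}$, to be proportional to $f_{ww}$ — concretely, one can differentiate the relation along the Gauss fiber or use that $\Gamma_w$ is $\varphi$-stable as a family. Part (1) already shows $\varphi(T_w\widehat{Y}) \subset T_w\widehat{Y}$, and combining with $\varphi(w) \in \Gamma_w \subset T_w\widehat{Y}$, one gets that $\varphi$ preserves the Gauss fibration near $w$, whence $\varphi(\Gamma_w) \subset \Gamma_w$ by Proposition \ref{p.Gauss}(2): $\Gamma_w$ is the fiber of $\gamma_{\widehat{Y}}$ through $w$, and an infinitesimal automorphism preserving $w$ up to $\Gamma_w$ preserves the tangent space to the fiber, which is $\Gamma_w$ itself.

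For (3), fix $A \in \aut(\widehat{Y})^{(1)}$ and a general $w$. By Proposition \ref{p.Gamma}, $A_{ww} = A_w(w) \in \Gamma_w$, so $\varphi := A_w \in \aut(\widehat{Y})$ satisfies the hypothesis of (2), and therefore $A_w(\Gamma_w) = \varphi(\Gamma_w) \subset \Gamma_w$. This is exactly the assertion.

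\medskip

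The main obstacle is the genericity argument inside (2): turning the pointwise identity $f(w, \varphi(n), u) = -f(n', n, u)$ into the statement $f_{w,\varphi(n)} \in \C \cdot f_{ww}$. The clean route is to avoid handling $\Null(f_w)$ by hand and instead argue geometrically: part (1) shows $\varphi$ preserves $T_w\widehat{Y}$, and since $w$ is general, $\varphi$ generates a local flow of linear automorphisms of $\widehat{Y}$; this flow preserves the Gauss map, hence maps the general fiber $\BP\Gamma_w$ (Proposition \ref{p.Gauss}(2)) through $[w]$ to the fiber through the moved point $[\varphi_t(w)]$, and because $\varphi(w) \in \Gamma_w = T_{[w]}(\text{fiber})$ the moved point stays in the same fiber to first order, forcing $\varphi(\Gamma_w) \subset \Gamma_w$. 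I expect the write-up to lean on this Gauss-fibration picture rather than on explicit manipulation of $\Null(f_w)$.
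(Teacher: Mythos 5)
Parts (1) and (3) of your proposal are correct and essentially identical to the paper's argument. The problem is part (2), and you have located it yourself: both of your attempts stop exactly at the decisive step. In the algebraic attempt you reduce to showing that the functional $u\mapsto f(n',n,u)$, for $n,n'\in\Null(f_w)$, is proportional to $f_{ww}$, and then appeal to ``differentiating along the Gauss fiber''; nothing you have written forces this proportionality, and for a fixed $w$ it is not a formal consequence of the identities you have used. In the geometric attempt, the phrase ``the moved point stays in the same fiber to first order, forcing $\varphi(\Gamma_w)\subset\Gamma_w$'' hides the entire content: the vanishing of $\tfrac{d}{dt}\gamma_Y([\varphi_t(w)])$ at $t=0$ says the base point in $Y^*$ is stationary, but to conclude that the moving subspace $\varphi_t(\Gamma_w)=\Gamma_{\varphi_t(w)}$ is stationary in the Grassmannian you must also use that the fiber depends only on the image point in $Y^*$ (i.e.\ $\Gamma_v=\Gamma_w$ and $T_v\widehat Y=T_w\widehat Y$ for general $v$ in the fiber, which is Proposition \ref{p.Gauss}(2)) and then run a chain-rule argument; preserving one point of a fiber to first order does not by itself imply preserving the fiber to first order. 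The argument can be completed along these lines, but as written the crux is asserted rather than proved.

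The paper closes (2) with a short algebraic trick worth adopting: assume $\dim\Gamma_w\ge 2$ and pick a general $v\in\Gamma_w$, so that $\Gamma_v=\Gamma_w$ and $T_v\widehat Y=T_w\widehat Y$ by Proposition \ref{p.Gauss}(2). Apply the $\aut(\widehat Y)$-identity to the triple $(v,w,u)$ with $u\in T_w\widehat Y$: the right-hand side $\chi(\varphi)f(v,w,u)$ vanishes since $f_{vw}\in\C\cdot f_{ww}$, the term $f(v,\varphi(w),u)$ vanishes because $\varphi(w)\in\Gamma_w=\Gamma_v$ and $u\in T_v\widehat Y$, and $f(v,w,\varphi(u))$ vanishes by part (1). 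Hence $f(\varphi(v),w,u)=0$ for all $u\in T_w\widehat Y=\Ker(f_{ww})$, i.e.\ $f_{\varphi(v),w}\in\C\cdot f_{ww}$, i.e.\ $\varphi(v)\in\Gamma_w$; since this holds for general $v$ in the linear space $\Gamma_w$, it holds for all of $\Gamma_w$. Note that the genericity of $w$ enters only through Proposition \ref{p.Gauss}(2), exactly the input your sketch gestures at, but the conclusion is then reached in two lines of the cocycle identity rather than through a Grassmannian deformation argument.
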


\begin{proof}
From Lemma \ref{l.Aut} (1), we have for any $u \in T_w \widehat{Y}$,
$$f(\varphi(w), w, u) + f(w, \varphi(w), u) + f(w, w, \varphi(u)) = \chi(\varphi) \ f(w, w, u) =0.$$ By the assumption $\varphi(w) \in \Gamma_w$, we have $$f(\varphi(w), w, u) = f(w, \varphi(w), u) = 0.$$ Thus $f(w, w, \varphi(u)) =0$, which means $\varphi(u) \in T_w \widehat{Y}$. This verifies (1).

To prove (2), we may assume that $\dim \Gamma_w \geq 2$.
Pick a general
$v \in \Gamma_w$ such that $\Gamma_w = \Gamma_v$ and $T_w \widehat{Y} = T_v \widehat{Y}$ by Proposition \ref{p.Gauss} (2). The assumption $\varphi(w) \in \Gamma_w =  \Gamma_v$ implies
\begin{equation}\label{e.varphi} f(v, \varphi(w), u) = 0 \mbox{ for any }u \in T_v \widehat{Y} = T_w \widehat{Y}.\end{equation}
From Lemma \ref{l.Aut}, we have  for any $u \in T_w \widehat{Y}$, $$f(\varphi(v), w, u) + f(v, \varphi(w), u) + f(v, w, \varphi(u)) = \chi(\varphi) f(v, w, u) =0.$$ On the left hand side,
the second term vanishes by (\ref{e.varphi}) and the third term vanishes from (1).
Thus we have $f(\varphi(v), w, u) =0$, proving (2).

(3) is immediate by applying (2) to $\varphi:= A_w$ which satisfies $\varphi(w) \in \Gamma_w$
by Proposition \ref{p.Gamma}. \end{proof}

     \begin{proposition}\label{p.A_xx}
    Let $Y \subset \BP W$ be an irreducible cubic hypersurface. For any nonzero  $A \in \aut({\widehat{Y}})^{(1)}$ and a general $w \in \widehat{Y}$, the vector $A_{ww}$ is not contained in $\widehat{w}$.  \end{proposition}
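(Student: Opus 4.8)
The plan is to argue by contradiction: suppose that $A_{ww} \in \widehat{w}$ for a general $w \in \widehat{Y}$, hence (by homogeneity of the condition $A_{ww}\in \C w$, which is a closed condition and is assumed on a dense set) for \emph{every} $w \in \widehat{Y}$. Thus there is a regular function $\lambda$ on $\widehat{Y}$, homogeneous of degree $1$, with $A_{ww} = \lambda(w)\, w$ for all $w \in \widehat{Y}$. Combining this with Proposition \ref{p.Gamma}, which gives $f(A_{ww}, w, u) = \tfrac{1}{2}\chi^A(w) f(w,w,u)$ for all $u$, we get $\lambda(w) f(w,w,u) = \tfrac12 \chi^A(w) f(w,w,u)$ for all $u \in W$ and all $w \in \widehat{Y}$; since for general $w \in \widehat{Y}$ the form $f(w,w,\cdot)$ is nonzero (as $Y$ is not contained in $\mathrm{Sing}(Y)$), we conclude $\lambda(w) = \tfrac12 \chi^A(w)$ on $\widehat{Y}$, so in fact $A_{ww} = \tfrac12 \chi^A(w)\, w$ for all $w \in \widehat{Y}$.

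Next I would exploit the identity of Lemma \ref{l.Aut}(2) with a polarization trick. Write $w_t = w + t v$ for $w, v \in \widehat{Y}$ general points on a secant line lying in $\widehat{Y}$ --- or, more robustly, polarize the degree-$3$ in $w$ identity $A_{w_tw_t} = \tfrac12 \chi^A(w_t) w_t$, valid whenever $w_t \in \widehat{Y}$, using a one-parameter family inside $\widehat{Y}$. Concretely, differentiate the relation along a curve $w(t)$ in $\widehat{Y}$ with $w(0)=w$, $w'(0) = e \in T_w\widehat{Y}$: this yields $2 A_{we} = \tfrac12 \chi^A(e) w + \tfrac12 \chi^A(w) e$ modulo the constraint that $e$ ranges over $T_w\widehat{Y}$, hence $A_{we} - \tfrac14\chi^A(w) e \in \C w$ for all $e \in T_w\widehat{Y}$. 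The point of this is that $A_w$ acts on $T_w\widehat{Y}$ (a hyperplane) as a scalar $\tfrac14\chi^A(w)$ plus something landing in the line $\C w$; since $w \in T_w\widehat{Y}$, on the quotient $T_w\widehat{Y}/\C w$ the endomorphism $A_w$ is the scalar $\tfrac14 \chi^A(w)$. Feeding this structural information back into the defining identity for $A \in \aut(\widehat Y)^{(1)}$ should force $\chi^A(w) = 0$ for general $w$, hence $\chi^A = 0$, and then Proposition \ref{p.injective} gives $A = 0$, contradicting the hypothesis that $A \neq 0$. The non-Hessian input (general $\mathrm{Null}(f_w)=0$) enters exactly when turning "$A_w$ acts as a scalar on a hyperplane" into a contradiction with $\chi^A = 0$: one computes $\mathrm{tr}$ or evaluates $f$ on suitable triples to pin down the would-be scalar.

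The main obstacle I anticipate is controlling the "error term in $\C w$": the differentiation argument only constrains $A_{we}$ for $e \in T_w\widehat{Y}$, not for $e$ transverse to the tangent hyperplane, and the coefficient of $w$ is an a priori uncontrolled linear functional of $e$. So the real work is to show that this extra functional, together with the scalar $\tfrac14\chi^A(w)$, is incompatible with $A_w \in \aut(\widehat{Y})$ unless everything vanishes. I would handle this by plugging $u = v = w$, or $u=w$ and $v = e \in T_w\widehat{Y}$, into the identity $f(A_{wu},v,s) + f(u,A_{wv},s) + f(u,v,A_{ws}) = \chi^A(w) f(u,v,s)$ of Lemma \ref{l.Aut}(2) and using $f(w,w,A_{ws})=0$ (established in the proof of Proposition \ref{p.Gamma}); the scalar-on-the-hyperplane behaviour will make most terms collapse, and matching coefficients should yield $2\cdot\tfrac14\chi^A(w) f(w,e,s) = \chi^A(w) f(w,e,s)$ for $e,s \in T_w\widehat Y$, which is vacuous, so one must instead look at $s \notin T_w\widehat Y$ where $f(w,w,s)\ne 0$, giving an equation of the form $\bigl(2\cdot\tfrac14 - 1\bigr)\chi^A(w)\,f(w,w,s) + (\text{terms from the }\C w\text{ error}) = 0$; since the error terms are a priori of lower "generic rank", a genericity/dimension count over $w \in \widehat Y$ should force $\chi^A \equiv 0$. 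If the direct coefficient-chase proves too delicate, an alternative is to note that $A_{ww}\in\C w$ for all $w\in\widehat Y$ means $\widehat Y$ is invariant under the flow of the (rational) vector field $w\mapsto A_{ww}$ in a particularly trivial way, and to derive a contradiction with the nonzero-Hessian hypothesis via the Gauss map, à la Proposition \ref{p.Gauss}.
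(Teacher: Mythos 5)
Your opening move is sound and even a little slicker than the paper's: from $A_{ww}\in\C w$ on all of $\widehat{Y}$ and Proposition \ref{p.Gamma} you correctly pin down $A_{ww}=\tfrac{1}{2}\chi^A(w)\,w$ for $w\in\widehat{Y}$. But from there the argument has a genuine gap, and it is exactly the one you flag yourself: differentiating along curves in $\widehat{Y}$ only determines $A_{wu}$ for $u\in T_w\widehat{Y}$, and every identity you can then extract from Lemma \ref{l.Aut}(2) with all arguments in $T_w\widehat{Y}$ collapses (as you note, the interesting terms are multiplied by $f(w,w,s)$, which vanishes precisely on that hyperplane), while for $s\notin T_w\widehat{Y}$ the term $f(u,v,A_{ws})$ involves the undetermined part of $A_w$. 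The appeal to ``lower generic rank'' of the error term and a ``genericity/dimension count'' is not an argument, and the fallback via the flow of $w\mapsto A_{ww}$ is not developed. Moreover your intended endgame ($\chi^A=0$, then Proposition \ref{p.injective} gives $A=0$) is illegitimate here: Proposition \ref{p.A_xx} does not assume nonzero Hessian, and Proposition \ref{p.injective} does.

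The missing idea is algebraic, not differential: the quadratic map $w\mapsto A_{ww}-\tfrac{1}{2}\chi^A(w)\,w$ vanishes on $\widehat{Y}$, and since an irreducible cubic hypersurface lies on no quadric, it vanishes identically on $W$. This is what the paper does (phrased via linear normality to first produce the linear functional $\lambda$, which your use of Proposition \ref{p.Gamma} replaces). Once you have $A_{ww}=\lambda(w)w$ for \emph{all} $w\in W$, polarization gives $A_{uv}=\tfrac{\lambda(u)}{2}v+\tfrac{\lambda(v)}{2}u$ everywhere, and a two-line computation with Lemma \ref{l.Aut}(2) yields $\lambda(u)f(w,u,u)=\lambda(w)f(u,u,u)$ for all $u,w$; fixing $w$ with $\lambda(w)\neq 0$ forces the hyperplane $\{\lambda=0\}$ to lie in $Y$, contradicting irreducibility --- with no Hessian hypothesis needed. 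I would accept your first paragraph as an improvement in exposition, but the proof is not complete without the ``no quadric contains an irreducible cubic'' step.
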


    \begin{proof}
    Assuming that  $A_{ww} \in \widehat{w}$ for all $w \in \widehat{Y}$,   let us draw a contradiction.
    Using the linear normality of $Y \subset \BP W$, we have
    a linear functional $\lambda \in W^*$ such that $$A_{ww} = \lambda(w) w   \mbox{ for all } w \in \widehat{Y}.$$ Thus  $$ w \mapsto A_{ww} - \lambda(w) w $$ determines a system of quadratic equations satisfied by $Y$. But  $Y$ is an irreducible cubic hypersurface. Thus we have $A_{ww} = \lambda(w)w$ for all $w \in W$ and \begin{equation}\label{e.lambda} A_{uv} = \frac{\lambda(u)}{2} v + \frac{\lambda(v)}{2} u \mbox{ for all } u, v \in W. \end{equation}
    From Lemma \ref{l.Aut} (2), we have
    $$f (A_{ww}, w, w) + f(w, A_{ww}, w) + f(w, w, A_{ww}) = \chi^A(w) \ f(w,w,w)$$ for any $w \in W$.  Since the left hand  side is $$ 3 \ f(A_{ww}, w, w) = 3\ f(\lambda(w) w, w, w) = 3 \lambda(w) \ f(w,w,w),$$
   we have $\chi^A = 3 \lambda.$Thus  $$ f( A_{wu}, u, u) + f(u, A_{wu}, u) + f(u, u, A_{wu}) = \chi^A(w)\ f(u, u, u)$$ gives
   $$  f(A_{wu}, u, u) =  \lambda(w) f(u, u, u) \mbox{ for all } u, w \in W.$$
     Since we have from (\ref{e.lambda}) $$ f(A_{wu}, u, u) = \frac{\lambda(w)}{2} f(u, u, u) + \frac{\lambda(u)}{2} f(w, u, u),$$ we obtain $$\frac{\lambda(u)}{2} f(w, u, u) = \frac{\lambda(w)}{2} f(u, u, u) \mbox{ for all } w, u \in W.$$ We know that $\lambda \in W^*$ is nonzero from (\ref{e.lambda}). So we can fix $w$ with $\lambda(w) \neq 0$ in the last equation to have  $f(u, u, u) = 0$ if $\lambda(u) =0$. This is impossible because $Y$ is an irreducible cubic hypersurface.
    \end{proof}

By Proposition \ref{p.Gamma} and Proposition \ref{p.A_xx}, if  $ {\rm aut}(\widehat{Y})^{(1)} \neq 0,$ then we have
$\dim \Gamma_w \geq 2$ for a general $w \in \widehat{Y}$. Combining it with Proposition \ref{p.Gauss} (2), we obtain

     \begin{corollary}\label{c.degdual}
     Let $Y \subset \BP W$ be an irreducible cubic hypersurface with nonzero Hessian. If $ {\rm aut}(\widehat{Y})^{(1)} \neq 0,$  then the Gauss map $\gamma_{Y}: Y \dasharrow \BP W^*$ is degenerate, i.e., the dual variety $Y^* \subset \BP W^*$ has codimension at least 2.
     \end{corollary}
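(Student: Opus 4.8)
The plan is simply to flesh out the remark preceding the statement: control the dimension of the Gauss fibers of $Y$ using the two propositions just proved. First I would fix any nonzero $A \in \aut(\widehat{Y})^{(1)}$ and examine $\Gamma_w$ for a general $w \in \widehat{Y}$. For such $w$ we have $f_{ww} \neq 0$, so $\widehat{w} \subset \Gamma_w$ by the very definition of the Gauss space. On the other hand, Proposition \ref{p.Gamma} gives $f_{A_{ww},w} \in \C \cdot f_{ww}$, which is exactly the condition $A_{ww} \in \Gamma_w$. Finally Proposition \ref{p.A_xx} tells us that, for a general $w \in \widehat{Y}$, the vector $A_{ww}$ is not proportional to $w$. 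Hence $w$ and $A_{ww}$ are linearly independent vectors both lying in $\Gamma_w$, so $\dim \Gamma_w \geq 2$ for general $w$.

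Next I would invoke Proposition \ref{p.Gauss} (2): since $Y$ has nonzero Hessian, the fiber of the affine Gauss map $\gamma_{\widehat{Y}}$ through a general $w \in \widehat{Y}$ equals $\Gamma_w$. Consequently the general fiber of $\gamma_Y \colon Y \dasharrow \BP W^*$ is the linear subspace $\BP \Gamma_w$, of projective dimension at least $1$. Since $\dim Y = \dim \BP W - 1$, the dual variety $Y^* = \overline{\gamma_Y(Y)}$ satisfies $\dim Y^* \leq \dim Y - 1 = \dim \BP W - 2 = \dim \BP W^* - 2$, i.e. $Y^*$ has codimension at least $2$ in $\BP W^*$. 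This is precisely the degeneracy of the Gauss map.

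I do not expect a genuine obstacle here: all the substantive content is already contained in Propositions \ref{p.Gamma}, \ref{p.A_xx} and \ref{p.Gauss} (2). The only point requiring a small amount of care is genericity — Proposition \ref{p.A_xx}, Proposition \ref{p.Gauss} (2) and the nonvanishing of $f_{ww}$ each hold only on a nonempty Zariski-open subset of $\widehat{Y}$ — but since $\widehat{Y}$ is irreducible one may simply choose $w$ in the intersection of these three dense open loci, so this causes no trouble.
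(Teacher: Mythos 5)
Your argument is correct and is essentially the paper's own: the paper likewise combines Proposition \ref{p.Gamma} and Proposition \ref{p.A_xx} to get two linearly independent vectors $w, A_{ww} \in \Gamma_w$, hence $\dim \Gamma_w \geq 2$ for general $w$, and then applies Proposition \ref{p.Gauss} (2) to conclude that the general Gauss fiber is positive-dimensional, so $Y^*$ has codimension at least $2$. Your added care about intersecting the relevant dense open loci is fine and causes no issue.
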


\begin{remark}\label{r.Zak}
Corollary \ref{c.degdual} shows that an answer to Question \ref{q}, as well as a proof of
 Theorem \ref{t.cubic}, can be obtained if one can classify projective subvarieties of codimension at least two whose dual varieties are cubic hypersurfaces. Zak has classified nonsingular varieties satisfying these properties (Section IV.5 of \cite{Za}). But there are serious difficulties in extending his arguments to singular varieties.
 \end{remark}

The next proposition shows that in Definition \ref{d.Xi}, the homomorphism $h^A$ is uniquely determined by $A$.

\begin{proposition}\label{p.unique}
Let  $Y \subset \BP W$ be an irreducible cubic hypersurface with nonzero Hessian.
Assume that we have  $0 \neq A \in \aut(\widehat{Y})^{(1)}$ of the form
$$A_{ww} = 2a \chi^A(w) w + h (f_{ww}) \mbox{ for } w \in W$$ for some $h \in \Hom(W^*, W)$ and $a \in \C$. Then $a$ and $h$ are uniquely determined by $A$. \end{proposition}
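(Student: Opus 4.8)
The plan is to show that the data $(a,h)$ is forced by the value $A_{ww}$ as $w$ ranges over $W$, using the nondegeneracy of $f$ twice: once to pin down $h$ on the image of the polar map, and once to pin it down on all of $W^*$. Suppose $A_{ww} = 2a\chi^A(w)w + h(f_{ww}) = 2a'\chi^A(w)w + h'(f_{ww})$ for all $w$; set $b = a - a'$ and $g = h - h' \in \Hom(W^*,W)$, so that
\begin{equation}\label{e.diff}
2b\,\chi^A(w)\,w = -\,g(f_{ww}) \quad\text{for all } w \in W.
\end{equation}
The goal is to conclude $b = 0$ and $g = 0$.

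First I would dispose of $b$. If $b \neq 0$, then \eqref{e.diff} says that for every $w \in W$ with $\chi^A(w) \neq 0$, the vector $w$ lies in the image of $g$. Since $\{\chi^A \neq 0\}$ is a nonempty Zariski-open subset of $W$ (note $\chi^A \not\equiv 0$: by Proposition \ref{p.injective} the map $A \mapsto \chi^A$ is injective and $A \neq 0$, so $\chi^A \neq 0$), its linear span is all of $W$, forcing $g$ to be surjective, hence $\dim W^* \geq \dim W$, i.e. $g$ is an isomorphism. Then \eqref{e.diff} reads $f_{ww} = -2b\,\chi^A(w)\,g^{-1}(w)$ for all $w$; since $w \mapsto f_{ww}$ is the polar map, composing with a fixed linear isomorphism, we would get that $w \mapsto \chi^A(w)\,w$ is, up to the linear change of coordinates $g^{-1}$, the polar map of $f$ — but $w\mapsto \chi^A(w)w$ has image contained in the union of lines through the origin meeting the hyperplane $\{\chi^A = 0\}$, a degenerate locus, whereas $Y$ having nonzero Hessian means the polar map is dominant (Lemma \ref{l.Phi}(4), Definition \ref{d.Hessian}). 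This contradiction gives $b = 0$. (Alternatively, and perhaps more cleanly: from \eqref{e.diff} with $b\neq 0$ one has $g(f_{ww})\in \widehat w$ for all $w\in\widehat Y$, in particular for general $w\in\widehat Y$; but then polarizing $f(w,w,w)=0$ shows $f_{ww}\in T_w\widehat Y=\ker f_{ww}$... one should pick whichever argument is shortest once the surrounding conventions are fixed.)

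With $b = 0$, equation \eqref{e.diff} collapses to $g(f_{ww}) = 0$ for all $w \in W$. Polarizing the quadratic (in $w$) identity $g(f_{ww}) = 0$ gives $g(f_{uw}) = 0$ for all $u,w \in W$, i.e. $g$ annihilates the linear span of $\{f_{uw} : u,w \in W\} \subset W^*$. Here I would invoke the nondegeneracy: condition (ND1) (equivalently, $Y$ is not a cone, which follows from irreducibility together with nonzero Hessian, or directly since (ND2) $\Rightarrow$ (ND1)) says precisely that for every $w \in W$ there are $u,v$ with $f(w,u,v) \neq 0$, which means the functionals $f_{uv}$ have no common zero, hence span $W^*$. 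Therefore $g = 0$ on $W^*$, giving $h = h'$. Combined with $a = a'$, this is the claim.

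The only genuine subtlety — and the step I'd expect to need the most care — is the elimination of $b$: one must make sure the argument correctly uses \emph{nonzero Hessian} (dominance of the polar map) rather than merely non-conehood, since the assertion would be false without some such hypothesis (e.g. for $f$ a cube of a linear form the polar map is far from dominant and there is genuine non-uniqueness of $a$). A cleaner route for that step, avoiding the isomorphism case analysis, is: from $2b\chi^A(w)w = -g(f_{ww})$, apply a general $\lambda \in W^*$ to both sides to get $2b\,\chi^A(w)\,\lambda(w) = -(\,{}^t g\lambda)(f_{ww})$; the right side is a quadratic form in $w$ of rank bounded by the (fixed, $\leq \dim W$) rank of $w\mapsto f_{ww}$ restricted appropriately, while one shows the left side $\chi^A\cdot\lambda$, a rank-$\leq 2$ quadratic form, cannot be so expressed unless $b=0$ — but I would only pursue this if the first argument turns out to have gaps against the paper's conventions; the polar-map dominance argument above should suffice.
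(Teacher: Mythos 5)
There is a genuine gap in your elimination of $b=a-a'$. Having shown $g=h-h'$ is an isomorphism and $f_{ww}=-2b\,\chi^A(w)\,g^{-1}(w)$, you claim a contradiction because "$w\mapsto \chi^A(w)w$ has image contained in \dots a degenerate locus." That claim is false: the map $w\mapsto \chi^A(w)w$ is dominant (for any $v$ with $\chi^A(v)\neq 0$, take $w=sv$ with $s^2\chi^A(v)=1$), so composing the dominant polar map with the isomorphism $g$ yields no contradiction with dominance. Your fallback sketch ("$g(f_{ww})\in\widehat{w}$ for $w\in\widehat{Y}$, \dots") is exactly the right starting point but is left unfinished, and your second fallback via ranks of quadratic forms is also not carried out and is not obviously sound. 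So as written the proposal does not establish $a=a'$. (Your second step, $h=h'$ once $b=0$, via polarization and (ND1), is correct.)

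The paper closes precisely this gap with an ingredient you never invoke: Corollary \ref{c.degdual}, which says that $\aut(\widehat{Y})^{(1)}\neq 0$ forces the dual variety $Y^*$ to have codimension at least $2$. Restricting the identity $g(f_{ww})=2(a'-a)\chi^A(w)w$ to $w\in\widehat{Y}$, the right-hand side sweeps out a dense subset of $\widehat{Y}$ (each line $\C w_0\subset\widehat{Y}$ with $\chi^A(w_0)\neq 0$ is covered), while the left-hand side lies in $g(\widehat{Y^*})$, whose dimension is at most $\dim\widehat{Y^*}<\dim\widehat{Y}$; this is the contradiction. Alternatively, your own setup can be finished without that corollary: from $f_{ww}=-2b\,\chi^A(w)\,g^{-1}(w)$ one gets $f(w,w,w)=-2b\,\chi^A(w)\,Q(w)$ with $Q(w)=(g^{-1}w)(w)$ a quadratic form, so $f$ factors as a linear form times a quadric, contradicting the irreducibility of $Y$. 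Either repair is needed; the dominance argument you actually wrote does not work.
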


\begin{proof}
Assume that we have $a, a' \in \C$ and $h, h' \in \Hom(W^*, W)$ such that
$$ A_{ww} = 2a \chi^A(w) w + h (f_{ww}) = 2 a' \chi^A(w) w + h'(f_{ww}) \mbox{ for } w \in W.$$
 If $a \neq a'$, then $(h-h')(f_{ww}) = 2 (a'-a) \chi^A(w) w$ implies that the constructible set  $$  J:= \{ (h-h')(f_{ww}), \ w \in \widehat{Y} \} $$ contains an open subset of $\widehat{Y}$. Proposition \ref{p.Gauss} (1) shows that $J$ is a subset of   $(h-h')(\widehat{Y}^*)$ for the dual variety $Y^* \subset \BP W^*.$ Thus Corollary \ref{c.degdual} implies that $\dim J \leq \dim \widehat{Y}^* < \dim \widehat{Y}$, a contradiction. We conclude that $a=a'$.

Now we have $(h-h') (f_{ww}) = 0 $ for all $w \in W$. Since $\Phi$ is dominant, we obtain $h=h'$. \end{proof}

 We will skip the straightforward proof of the following lemma.

\begin{lemma}\label{l.action}
\begin{itemize} \item[(1)] Let ${\rm Aut}_o(\widehat{Y}) \subset {\bf GL}(W)$ be the subgroup corresponding to the Lie algebra $\aut(\widehat{Y}).$ Then there exists a homomorphism $$e^{\chi}: {\rm Aut}_o(\widehat{Y}) \to \C^*$$ such that  every $g \in {\rm Aut}_o(\widehat{Y})$ satisfies  $$f(g\cdot u, g \cdot v, g \cdot w) =e^{\chi} (g)  \ f(u, v, w)$$ for all $u, v, w \in W.$
\item[(2)] The action of $g \in {\rm Aut}_o(\widehat{Y})$
on  $A \in \aut(\widehat{Y})^{(1)}$  defined by $$(g \cdot A)_{uv} = g \cdot (A_{g^{-1}u, g^{-1}v})$$ satisfies $$(g \cdot A)_u = g \cdot (A_{g^{-1}u}) \mbox{ and } \chi^{g \cdot A} = g \cdot \chi^A$$ where the action of $g$ in the righthand side of the first (resp. second) equality  refers to  the induced action of $g$ on ${\rm End}(W)$ (resp. $W^*$). \end{itemize} \end{lemma}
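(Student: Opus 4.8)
The plan is to realize ${\rm Aut}_o(\widehat{Y})$ as the identity component of the stabilizer
$G := \{ g \in {\bf GL}(W) : g(\widehat{Y}) = \widehat{Y}\}$
of the cone, and then to keep track of how such $g$ interact with the cubic form. For (1), write $\widetilde f(w) := f(w,w,w)$; since $\widetilde f$ is an irreducible cubic, any $g \in G$ carries $\widetilde f$ to a cubic vanishing on the irreducible hypersurface $\widehat{Y}$, hence to a scalar multiple of $\widetilde f$, so there is a unique $c(g) \in \C^*$ with $\widetilde f(gw) = c(g)\,\widetilde f(w)$ for all $w$, and $g \mapsto c(g)$ is visibly a character $G \to \C^*$. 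Polarizing the identity $\widetilde f(gw) = c(g)\widetilde f(w)$ yields $f(gu, gv, gw) = c(g)\,f(u,v,w)$. By Lemma \ref{l.Aut} (1) the Lie algebra of $G$ is exactly $\aut(\widehat{Y})$, so ${\rm Aut}_o(\widehat{Y})$ is the identity component $G_o$ of $G$, and $e^{\chi} := c|_{G_o}$ is the required homomorphism.

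For the first identity in (2), I would simply evaluate both sides at an arbitrary $v \in W$: by definition $(g\cdot A)_u(v) = (g\cdot A)_{uv} = g\big(A_{g^{-1}u}(g^{-1}v)\big) = \big(g\,A_{g^{-1}u}\,g^{-1}\big)(v)$, so $(g\cdot A)_u = g\,A_{g^{-1}u}\,g^{-1}$, which is the (conjugation) induced action of $g$ on ${\rm End}(W)$ applied to $A_{g^{-1}u} \in \aut(\widehat{Y})$. In particular, since $G_o$ normalizes its Lie algebra, $(g\cdot A)_u \in \aut(\widehat{Y})$ for every $u$, so $g \cdot A \in \aut(\widehat{Y})^{(1)}$ and the formula does define an action of ${\rm Aut}_o(\widehat{Y})$ on $\aut(\widehat{Y})^{(1)}$.

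The only step that calls for a short computation is the ${\rm Ad}$-invariance of the functional $\chi$ of Lemma \ref{l.Aut} (1): given $\varphi \in \aut(\widehat{Y})$ and $g \in G_o$, substitute $u = g\bar u$, $v = g\bar v$, $w = g\bar w$ into the defining relation for $\chi(g\varphi g^{-1})$ and apply the identity $f(gx,gy,gz) = e^{\chi}(g)\,f(x,y,z)$ to each of the three terms on the left-hand side and to the single term on the right-hand side; the factor $e^{\chi}(g)$ cancels, leaving $\chi(g\varphi g^{-1}) = \chi(\varphi)$. Granting this, $\chi^{g\cdot A}(w) = \chi\big((g\cdot A)_w\big) = \chi\big(g\,A_{g^{-1}w}\,g^{-1}\big) = \chi(A_{g^{-1}w}) = \chi^A(g^{-1}w) = (g\cdot\chi^A)(w)$, where $g\cdot$ on $W^*$ is the ordinary contragredient action; this is the second identity. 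There is no real obstacle here: the only things to watch are the bookkeeping of the various induced ${\bf GL}(W)$-actions and the appeal to irreducibility of $f$ in step (1).
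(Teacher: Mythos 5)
Your argument is correct and is precisely the ``straightforward proof'' that the paper explicitly skips: identify ${\rm Aut}_o(\widehat{Y})$ with the identity component of the stabilizer of the irreducible cubic $\widetilde f$ to get the character $e^{\chi}$ by irreducibility and polarization, read off $(g\cdot A)_u = g\,A_{g^{-1}u}\,g^{-1}$ from the definition, and deduce $\chi^{g\cdot A}=g\cdot\chi^A$ from the ${\rm Ad}$-invariance of $\chi$. All steps check out, including the cancellation of $e^{\chi}(g)$ that gives $\chi(g\varphi g^{-1})=\chi(\varphi)$, so there is nothing to correct.
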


\begin{proposition}\label{p.invariance}
For each $a \in \C$, the subspace $\Xi^a_Y \subset \aut(\widehat{Y})^{(1)}$ of Definition \ref{d.Xi} is preserved under the action of ${\rm Aut}_o(\widehat{Y})$ on
$\aut(\widehat{Y})^{(1)}$ described in Lemma \ref{l.action}. \end{proposition}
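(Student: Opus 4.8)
The plan is to unwind the two defining relations in Definition~\ref{d.Xi} and check that they are each preserved by the $g$-action of Lemma~\ref{l.action}(2). Fix $g \in {\rm Aut}_o(\widehat{Y})$ and $A \in \Xi^a_Y$, and set $A' := g \cdot A$. By Proposition~\ref{p.unique}, it suffices to exhibit a single $h' \in \Hom(W^*, W)$ such that
$$A'_{uv} = a\,\chi^{A'}(u)\,v + a\,\chi^{A'}(v)\,u + h'(f_{uv}) \quad \text{for all } u,v \in W,$$
since this is exactly the membership condition for $\Xi^a_Y$ (the exponent $a$ being the same is automatic, as it is the $a$ appearing in the displayed identity). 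The natural candidate is $h' := g \circ h^A \circ g^*$, where $g^* \colon W^* \to W^*$ is the transpose-inverse action of $g$ on $W^*$ (the action under which $\chi^{g\cdot A} = g \cdot \chi^A$ in Lemma~\ref{l.action}(2)).

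First I would record how $g$ transforms each of the three terms. Starting from $(g\cdot A)_{uv} = g\cdot(A_{g^{-1}u,\,g^{-1}v})$ and substituting the defining relation for $A$ at the arguments $g^{-1}u, g^{-1}v$, we get
$$A'_{uv} = g\cdot\!\Bigl( a\,\chi^A(g^{-1}u)\,(g^{-1}v) + a\,\chi^A(g^{-1}v)\,(g^{-1}u) + h^A\bigl(f_{g^{-1}u,\,g^{-1}v}\bigr)\Bigr).$$
Because $g$ acts linearly on $W$, the first term becomes $a\,\chi^A(g^{-1}u)\,v$; and $\chi^A(g^{-1}u) = (g\cdot\chi^A)(u) = \chi^{A'}(u)$ by Lemma~\ref{l.action}(2), so this is $a\,\chi^{A'}(u)\,v$, and similarly for the second term. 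For the third term I need the transformation rule for $f_{uv}$: from $f(g\cdot p, g\cdot q, g\cdot r) = e^{\chi}(g)\,f(p,q,r)$ in Lemma~\ref{l.action}(1), evaluated at $p = g^{-1}u$, $q = g^{-1}v$, $r$ arbitrary, one reads off $f_{g^{-1}u,\,g^{-1}v} = e^{\chi}(g)\,(g^*)^{-1}(f_{uv})$, i.e. up to the scalar $e^{\chi}(g)$ the map $(u,v) \mapsto f_{uv}$ is $g$-equivariant for the $g^*$-action on $W^*$. Hence the third term equals $g\bigl(h^A(e^{\chi}(g)\,(g^*)^{-1} f_{uv})\bigr) = e^{\chi}(g)\,\bigl(g\circ h^A\circ (g^*)^{-1}\bigr)(f_{uv})$, which is of the form $h'(f_{uv})$ for $h' := e^{\chi}(g)\, g\circ h^A\circ (g^*)^{-1}$. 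Collecting the three pieces gives exactly the membership relation for $A' \in \Xi^a_Y$.

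The only genuinely delicate point is getting the $W^*$-side transformation conventions consistent: one must check that the "action of $g$ on $W^*$" used in $\chi^{g\cdot A} = g\cdot\chi^A$ (Lemma~\ref{l.action}(2)) is the same identification of $W^*$ as a $g$-space that appears in the rule for $f_{uv}$, so that the polar covector $f_{ww}$ transforms compatibly with $\chi^A$ — otherwise the cross terms will not match. This is a matter of bookkeeping with transposes and the twist $e^{\chi}(g)$, and I expect it to be the main (and essentially only) obstacle; no geometric input beyond Lemma~\ref{l.action} is needed. I would also note in passing that since $\Xi^a_Y$ is a linear subspace and the $g$-action is linear, preservation of the defining relation for every element suffices, and no separate closedness argument is required.
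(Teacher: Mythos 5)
Your proposal is correct and follows essentially the same route as the paper: substitute the defining relation of $\Xi^a_Y$ at $g^{-1}u, g^{-1}v$, use $\chi^{g\cdot A}=g\cdot\chi^A$ for the first two terms, and absorb the equivariance of $(u,v)\mapsto f_{uv}$ (up to the character $e^{\chi}$) into a conjugated homomorphism $h^{g\cdot A}=g\circ h^A\circ(e^{\chi}(g^{-1})g^{-1})$, exactly as in the paper. The only quibble is that your scalar should be $e^{\chi}(g^{-1})=e^{\chi}(g)^{-1}$ rather than $e^{\chi}(g)$, a harmless bookkeeping slip since it is absorbed into $h'$, and the appeal to Proposition~\ref{p.unique} is unnecessary (exhibiting some $h'$ suffices for membership).
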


\begin{proof}
For any $u, v, w \in W$, $h \in \Hom(W^*, W)$ and $g \in {\rm Aut}_o(\widehat{Y})$, we have \begin{eqnarray*}
f_{g^{-1}u, g^{-1}v}(w) & =& f( g^{-1} u, g^{-1} v, w) \\ &=& e^{\chi}(g^{-1}) \ f(u, v, g w) \\ &=& e^{\chi}(g^{-1}) \ f_{uv}(gw) \\ &=& e^{\chi}(g^{-1}) \ ( g^{-1} \cdot f_{uv}) (w). \end{eqnarray*}
Thus \begin{equation}\label{e1}  g \left( h (f_{g^{-1}u, f^{-1}v}) \right) = g \left( h \left( e^{\chi}(g^{-1}) g^{-1} \cdot f_{uv} \right) \right). \end{equation}
Given $A \in \Xi^a_Y$, set $$h^{g \cdot A} := g \circ h^A \circ \left( e^{\chi}(g^{-1}) g^{-1} \right) \in \Hom (W^*, W),$$ where $ e^{\chi}(g^{-1}) g^{-1}$ is viewed as an element of $\End(W^*).$ Then (\ref{e1}) implies
\begin{equation}\label{e2}  h^{g \cdot A}(f_{uu}) = g \cdot \left( h^A ( f_{g^{-1}u, g^{-1}u}) \right). \end{equation}
Then
\begin{eqnarray*} (g \cdot A)_{uu} &=& g \cdot (A_{g^{-1}u, g^{-1}u}) \\ &=&
g \cdot \left( 2 a \chi^A(g^{-1} u) g^{-1}u + h^A(f_{g^{-1}u, g^{-1}u}) \right)\\ & =&
2 a (g \cdot \chi^A) (u) \ u + g \left( h^A ( f_{g^{-1}u, g^{-1}u}) \right)  \\ &=& 2 a \chi^{g \cdot A}(u) \ u + h^{g \cdot A}(f_{uu})
,\end{eqnarray*}  where we applied  Lemma \ref{l.action} (2) and  (\ref{e2}) to derive the last line.
Thus $g \cdot A \in \Xi^a_Y.$ \end{proof}

\section{Proof of ${\rm Sec}({\rm Sing}(Y)) = Y$}\label{s.cubic1}

In this section, we will prove the following.

  \begin{theorem}\label{t.cubic1}
 Let $Y \subset \BP W$ be an irreducible cubic hypersurface with nonzero Hessian defined by a cubic form $f \in \Sym^3 W^*$.
 Assume that
there exists a nonzero  $ A \in \aut(\widehat{Y})^{(1)}$ of the form
    $$A_{uv} = a \ \chi^A (u) v + a \ \chi^A (v) u + h( f_{uv}) \mbox{ for } u, v \in W,$$
for some $h \in \Hom (W^*, W)$ and
$a \in \C, a \neq \frac{1}{4}.$ Then the secant variety of ${\rm Sing}(Y)$ coincides with the cubic hypersurface $Y$. \end{theorem}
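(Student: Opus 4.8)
The plan is to show that the hypothesis forces every point of $Y$ to lie on a secant line of ${\rm Sing}(Y)$. The starting point is the identity
\[
A_{ww} = 2a\,\chi^A(w)\,w + h(f_{ww}) \qquad (w\in W),
\]
and Proposition \ref{p.Gamma}, which tells us $A_{ww}\in\Gamma_w$ for $w\in\widehat Y$ and, more precisely, $f(A_{ww},w,u)=\tfrac12\chi^A(w)f(w,w,u)$ for all $u$. Substituting the displayed formula for $A_{ww}$ into this relation gives
\[
2a\,\chi^A(w)\,f(w,w,u) + f(h(f_{ww}),w,u) \;=\; \tfrac12\,\chi^A(w)\,f(w,w,u),
\]
so that for all $w\in\widehat Y$ and all $u\in W$,
\[
f\bigl(h(f_{ww}),\,w,\,u\bigr) \;=\; \bigl(\tfrac12 - 2a\bigr)\,\chi^A(w)\,f(w,w,u)
\;=\;\bigl(\tfrac12-2a\bigr)\,\chi^A(w)\,f_{ww}(u).
\]
This says $h(f_{ww})$ lies in the Gauss space $\Gamma_w$ as well, and in fact (if $\chi^A(w)\neq 0$, which holds generically by Proposition \ref{p.injective}) that $f_{h(f_{ww}),w}$ is a definite scalar multiple of $f_{ww}$. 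The goal of the first half of the argument will be to extract from this, for a general $w\in\widehat Y$, a second point $w'\in\widehat Y$ on the line $\overline{[w][h(f_{ww})]}$ (or a suitable linear combination of $w$ and $h(f_{ww})$) lying in ${\rm Sing}(Y)$, i.e. with $f_{w'w'}=0$. Concretely, I would look for $s,t\in\C$ with $w' = sw + t\,h(f_{ww})$ and compute $f_{w'w'} = s^2 f_{ww} + 2st\, f_{w,h(f_{ww})} + t^2 f_{h(f_{ww}),h(f_{ww})}$; the two displayed relations above control the first two terms (they are proportional to $f_{ww}$ in $W^*$), and the condition $a\neq\tfrac14$ should be exactly what makes the resulting quadratic in $(s:t)$ have a root giving a genuinely new point of ${\rm Sing}(Y)$ rather than forcing $w$ itself to be singular.

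More carefully, I expect the mechanism to be: because $A_w\in\aut(\widehat Y)$ and $A_w(w)=A_{ww}\in\Gamma_w$, Proposition \ref{p.varphi}(1)–(2) gives $A_w(T_w\widehat Y)\subset T_w\widehat Y$ and $A_w(\Gamma_w)\subset\Gamma_w$; iterating, the whole line (or plane) spanned by $w$ and $A_{ww}$ — hence by $w$ and $h(f_{ww})$ — sits inside a fiber of the affine Gauss map of $Y$, which by Proposition \ref{p.Gauss}(2) is $\Gamma_w$ for general $w$. Then Proposition \ref{p.Gauss}(3), applied to the linear space $F = \BP\Gamma_w$ (or to the line inside it through $[w]$ and $[h(f_{ww})]$), tells us $F\cap{\rm Sing}(Y)$ is a hypersurface of degree $\le 2$ in $F$, so it is nonempty of dimension $\dim F - 1$. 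This already produces singular points on a positive-dimensional family of linear spaces sweeping out $Y$. To upgrade "singular points on every Gauss fiber" to "$Y = {\rm Sec}({\rm Sing}(Y))$" I would argue dimension-theoretically: the Gauss fibers $\BP\Gamma_w$ cover $Y$, each meets ${\rm Sing}(Y)$ in a quadric hypersurface of $\BP\Gamma_w$, and a quadric hypersurface in $\BP^{k}$ (with $k\ge 1$) is swept out by the secant lines of any of its own codimension-one subvarieties — more simply, a quadric of dimension $\ge 1$ equals the secant variety of (a hyperplane section of) itself. Hence a general point of a general Gauss fiber lies on a secant line of ${\rm Sing}(Y)$, and since ${\rm Sec}({\rm Sing}(Y))\subset Y$ is closed and contains a dense subset of the irreducible hypersurface $Y$, it equals $Y$.

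The step I expect to be the main obstacle is the transition from the pointwise/infinitesimal statement "$h(f_{ww})\in\Gamma_w$ with the specified proportionality constant" to the global geometric statement "the line through $[w]$ and $[h(f_{ww})]$ — or some explicit point of $\Gamma_w$ built from these — actually hits ${\rm Sing}(Y)$", and in particular pinning down where $a\neq\tfrac14$ enters. The delicate point is that $h(f_{ww})$ could a priori be a multiple of $w$ (this is precisely the content of Proposition \ref{p.A_xx}: it is \emph{not}, for general $w$, since $A_{ww}\notin\widehat w$ and $A_{ww} = 2a\chi^A(w)w + h(f_{ww})$), or could lie in ${\rm Null}(f_w)$, and the quadratic form $f_{h(f_{ww}),h(f_{ww})}$ controlling the $t^2$ term is not obviously manageable. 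I anticipate one needs to also exploit the full prolongation identity of Lemma \ref{l.Aut}(2) with $w$ a general point of $\widehat Y$ and a well-chosen second slot to evaluate $f(h(f_{ww}),h(f_{ww}),u)$, and that the computation collapses cleanly only because the coefficient $\tfrac12-2a$ is nonzero exactly when... — tracing this is where the real work lies; the covering/secant argument at the end is comparatively soft, relying on Lemma \ref{l.Sing} (${\rm Sec}({\rm Sing}(Y))\subset Y$) together with irreducibility of $Y$.
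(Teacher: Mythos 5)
Your reduction to the fibers of the Gauss map is the right starting point, and your computation that $f\bigl(h(f_{ww}),w,u\bigr)=(\tfrac12-2a)\,\chi^A(w)\,f_{ww}(u)$ is correct, but the step you describe as ``comparatively soft'' is exactly where the argument breaks. Proposition \ref{p.Gauss}(3) only gives that the set-theoretic intersection $\BP \Gamma_w\cap{\rm Sing}(Y)$ is a hypersurface of degree \emph{at most} $2$ in $\BP \Gamma_w$; it may perfectly well be a single \emph{linear} subspace $B_w$ of codimension one (e.g.\ the reduction of a double hyperplane). In that case its secant variety is $B_w$ itself, it does not sweep out $\BP \Gamma_w$, and your covering argument produces no secant line of ${\rm Sing}(Y)$ through a general point of the fiber; secant lines joining singular points lying in \emph{different} Gauss fibers are not controlled by anything you have set up. This linear case is precisely the content of the theorem: the paper argues by contradiction, assumes ${\rm Sec}({\rm Sing}(Y))\neq Y$, deduces from that assumption that each intersection \emph{is} a hyperplane $B_w\subset\Gamma_w$ (Proposition \ref{p.sB}), and then devotes the whole of Section \ref{s.cubic1} to ruling this configuration out. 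Your proposal thus correctly disposes of the easy half of the dichotomy and leaves the essential half unaddressed.

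On the question you flag of where $a\neq\tfrac14$ enters: it is not needed to produce a singular point on the line through $w$ and $A_{ww}$ --- the discriminant computation in Proposition \ref{p.sigma} shows unconditionally that $\sigma^A_{ww}=A_{ww}-\tfrac{\chi^A(w)}{2}w$ lies in $B_w$. Rather, since $\sigma^A_{ww}=(2a-\tfrac12)\chi^A(w)\,w+h(f_{ww})$, the hypothesis $a\neq\tfrac14$ (together with $\chi^A(w)\neq 0$ for general $w$) is exactly what guarantees that $s:=h(f_{ww})$ is a point of $\Gamma_w$ \emph{not} in $B_w$, hence a nonsingular point of $\widehat Y$ in the fiber; this makes $h$ into a rational section of the Gauss map $Y\dasharrow Y^*$. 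The contradiction is then extracted not from any sweeping argument but from the ${\rm Aut}_o(\widehat{Y})$-invariance of $h(\widehat{Y}^*)$ for a suitably normalized $A$ (Propositions \ref{p.check}--\ref{p.Y^+}) combined with the identities (C1)--(C3) and the quadratic form $q$ on $\Gamma_w$ in the paper's proof --- a mechanism quite different from, and considerably more involved than, the direct construction you outline.
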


To prove this, we assume ${\rm Sec}({\rm Sing}(Y)) \neq Y$ and derive a contradiction.
We start with the following geometric implication of this assumption.

\begin{proposition}\label{p.sB}
Let $Y \subset \BP W$ be an irreducible cubic hypersurface with nonzero Hessian defined by a cubic form $f \in \Sym^3 W^*$.   Assume that  $Y \neq {\rm Sec}({\rm Sing}(Y))$.
\begin{itemize} \item[(1)] For a general $w \in \widehat{Y}$, the Gauss subspace $\Gamma_w$ contains a linear subspace $B_w \subset \Gamma_w$ of codimension 1 such that  $\BP B_w = \BP \Gamma_w \cap {\rm Sing}(Y).$ \item[(2)] Let $\sB \subset W$ be the linear span of the union of $B_w$'s as $w$ varies over general points of $\widehat{Y}$. Then $\sB$ is preserved under $\aut(\widehat{Y})$, i.e. for any $\varphi \in \aut(\widehat{Y})$ and $v \in \sB$, the image $\varphi(v)$ is in $\sB$. \end{itemize} \end{proposition}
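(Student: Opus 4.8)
The plan is to establish the two assertions of Proposition~\ref{p.sB} separately, with (1) being a consequence of the geometry of the Gauss fibration together with Proposition~\ref{p.Gauss}~(3), and (2) following from (1) together with the equivariance of the Gauss map under $\aut(\widehat Y)$.

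For part (1): by Corollary~\ref{c.degdual} the hypothesis $\aut(\widehat Y)^{(1)}\neq 0$ (which is implicit, since we are ultimately in the setting of Theorem~\ref{t.cubic1}) forces the Gauss map of $Y$ to be degenerate, so its general fiber $\BP\Gamma_w$ has dimension $\geq 1$. By Proposition~\ref{p.Gauss}~(2), for general $w\in\widehat Y$ this fiber is precisely $\BP\Gamma_w$, and $\BP\Gamma_w$ is a linear subspace of $Y$ along which $\gamma_Y$ is constant, with $\BP\Gamma_w\not\subset\mathrm{Sing}(Y)$ (e.g. because $w\notin\widehat{\mathrm{Sing}(Y)}$ and $w\in\Gamma_w$). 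Then Proposition~\ref{p.Gauss}~(3) applies to $F=\BP\Gamma_w$: the set-theoretic intersection $\BP\Gamma_w\cap\mathrm{Sing}(Y)$ is a hypersurface of degree $\leq 2$ in $\BP\Gamma_w$. The key remaining point is to upgrade ``degree $\leq 2$'' to ``degree exactly $1$'', i.e.\ a hyperplane $\BP B_w$. This is where the hypothesis $Y\neq\mathrm{Sec}(\mathrm{Sing}(Y))$ enters: if the intersection had degree $2$ (or were all of $\BP\Gamma_w$), then a general point of $\BP\Gamma_w$ — hence a general point of $Y$, since the $\BP\Gamma_w$ sweep out $Y$ — would lie on a secant line of $\mathrm{Sing}(Y)$ (a line in $\BP\Gamma_w$ meeting the quadric $\BP\Gamma_w\cap\mathrm{Sing}(Y)$ in two points, or a line in the linear space $\BP B_w$ itself), forcing $Y=\mathrm{Sec}(\mathrm{Sing}(Y))$, a contradiction. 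The degenerate sub-cases (the quadric being a double hyperplane, or empty, or $\dim\Gamma_w$ small) need to be checked by hand, but in each the same dimension count on secants gives the contradiction, leaving exactly the codimension-one linear possibility $B_w\subsetneq\Gamma_w$.

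For part (2): let $U\subset\widehat Y$ be the dense open set of points $w$ for which part (1) holds and for which $\Gamma_w$ is the genuine Gauss fiber. Given $\varphi\in\aut(\widehat Y)$, the corresponding one-parameter group $e^{t\varphi}$ lies in $\mathrm{Aut}_o(\widehat Y)$ and commutes with the Gauss map up to the linear action on $W^*$; in particular $e^{t\varphi}$ permutes Gauss fibers, so $e^{t\varphi}(\Gamma_w)=\Gamma_{w'}$ for $w'=e^{t\varphi}w$, and since $e^{t\varphi}$ also preserves $\mathrm{Sing}(Y)$ it carries $B_w=$ (the affine cone over) $\BP\Gamma_w\cap\mathrm{Sing}(Y)$ onto $B_{w'}$. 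For $w$ in a slightly smaller dense open set, $w'=e^{t\varphi}w$ also lies in $U$ for small $t$, so $e^{t\varphi}(\sB)\subset\sB$ for all small $t$, whence $\varphi(\sB)\subset\sB$ by differentiating at $t=0$. (Alternatively, one differentiates the identity $B_{e^{t\varphi}w}=e^{t\varphi}(B_w)$ directly: for $v\in B_w$, $\frac{d}{dt}\big|_{0}e^{t\varphi}v=\varphi(v)$ lies in the tangent space to the family $\{B_{w'}\}$, which is contained in $\sB$ by definition of $\sB$ as the span.) Since $\sB$ is spanned by the $B_w$, $w\in U$, and each generator is sent into $\sB$, the subspace $\sB$ is $\aut(\widehat Y)$-invariant.

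The main obstacle is the sharpening in part (1) from ``degree $\leq 2$ hypersurface'' to ``hyperplane'': one must rule out the possibility that $\BP\Gamma_w\cap\mathrm{Sing}(Y)$ is an irreducible (or reducible, or double) quadric, and the clean way to do this is the secant-sweep argument above, but it requires knowing that the union of the $\BP\Gamma_w$ over general $w$ covers a dense subset of $Y$ — which is exactly Proposition~\ref{p.Gauss}~(2) telling us these are the general Gauss fibers — and a careful treatment of the low-dimensional and non-reduced degenerate cases of the quadric. Part (2) is comparatively routine once one is careful that ``general $w$'' is preserved under the flow for small time.
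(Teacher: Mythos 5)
Your proof is correct and follows essentially the same route as the paper: part (1) via Proposition~\ref{p.Gauss}~(2),(3) plus the observation that a degree-two set-theoretic intersection would make $\mathrm{Sec}(\mathrm{Sing}(Y))$ sweep out all of $Y$, and part (2) via the $\mathrm{Aut}_o(\widehat{Y})$-equivariance of the Gauss fibers and of $\mathrm{Sing}(Y)$, giving $g(B_w)=B_{gw}$ and hence invariance of $\sB$. The paper's own proof is just a terser version of the same argument, without the explicit discussion of the degenerate sub-cases.
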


\begin{proof}
By Proposition \ref{p.Gauss} (2), the  fiber of the Gauss map of $\widehat{Y}$ through $w$ is $\Gamma_w$. By Proposition \ref{p.Gauss} (3),  the set-theoretic intersection ${\bf B}:= \BP \Gamma_w \cap {\rm Sing}(Y)$ is a hypersurface in $\BP \Gamma_w$. If ${\bf B}$ is a hypersurface of degree bigger than 1 in $\BP \Gamma_w$,  the secant variety of ${\bf B}$ must be the whole $\BP \Gamma_w$. As $[w]$ is a  general point of $Y$, this implies that the secant variety of ${\rm Sing}(Y)$  is $Y$, a contradiction to the assumption. It follows that ${\bf B}$ must be a linear subspace. Thus we have a linear subspace $B_w  \subset \Gamma_w$ of codimension 1 satisfying (1).

From the definition of $B_w$, any element $g \in {\rm Aut}_o(\widehat{Y})$ sends $B_w$ into $B_{gw}.$ Thus $g(\sB) = \sB$, which implies (2). \end{proof}

   \begin{proposition}\label{p.sigma}  Let $Y \subset \BP W$ be an irreducible cubic hypersurface with nonzero Hessian defined by a cubic form $f \in \Sym^3 W^*$.   Assume that  $Y \neq {\rm Sec}({\rm Sing}(Y))$ and we have a nonzero $A \in \aut(\widehat{Y})^{(1)}$. Let $\sH^A \subset W$ be the hyperplane defined by $\chi^A = 0.$ Define $\sigma^A \in \Hom (S^2 W, W)$ by $$\sigma^A_{uv} :=   A_{uv} - \frac{\chi^A(u)}{4} v - \frac{\chi^A(v)}{4} u \mbox{ for } u, v \in W.$$ For a general point  $w \in {\rm Sm}(\widehat{Y})$, let $B_w \subset \Gamma_w$ be as in Proposition \ref{p.sB}.
 Then  $\sigma^A_{ww} \in B_w$ and  $B_w = \Gamma_w \cap \sH^A.$  In particular, for the subspace $\sB \subset W$ defined in Proposition \ref{p.sB}, we have
 $\sigma^A \in \Hom(\Sym^2 W, \sB)$ and $\sB \subset \sH^A$.   \end{proposition}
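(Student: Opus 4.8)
The plan is to combine the defining equation of $A$ from the hypothesis (the shape $A_{uv} = a\chi^A(u)v + a\chi^A(v)u + h(f_{uv})$ — although this proposition is stated for a general nonzero $A$, the key input is really Proposition \ref{p.Gamma}) with the geometric description of $B_w$ from Proposition \ref{p.sB}. First I would recall from Proposition \ref{p.Gamma} that $A_{ww} \in \Gamma_w$ and, more precisely, $f(A_{ww}, w, u) = \tfrac{\chi^A(w)}{2} f(w,w,u)$ for all $u$. Since $\sigma^A_{ww} = A_{ww} - \tfrac{\chi^A(w)}{2} w$ differs from $A_{ww}$ by a multiple of $w \in \Gamma_w$, we get $\sigma^A_{ww} \in \Gamma_w$ as well, and moreover $f(\sigma^A_{ww}, w, u) = f(A_{ww},w,u) - \tfrac{\chi^A(w)}{2} f(w,w,u) = 0$ for all $u \in W$, i.e. $\sigma^A_{ww} \in {\rm Null}(f_w)$.

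The next step is to identify ${\rm Null}(f_w) \cap \Gamma_w$, or rather to pin down which codimension-one subspace of $\Gamma_w$ is $B_w$. By Proposition \ref{p.sB}(1), $B_w$ is the unique linear hyperplane in $\Gamma_w$ whose projectivization is $\BP\Gamma_w \cap {\rm Sing}(Y)$, and by Lemma \ref{l.Sing}, $\widehat{{\rm Sing}(Y)} = \{v : f_{vv} = 0\}$. So I would show $v \in B_w \iff f_{vv} = 0$ for $v \in \Gamma_w$; since $\Gamma_w$ is spanned by $\widehat w$ and ${\rm Null}(f_w)$ (Lemma \ref{l.Phi}(1)) and since on ${\rm Null}(f_w)$ we have $f(v, w, u) = 0$, the restriction of the quadratic form $v \mapsto f_{vv}$ to $\Gamma_w$ should be expressible in terms of $\chi^A$ — this is where the hypothesis on the shape of $A$ (or rather the relation $\chi^A$ controls $A_{ww}$ modulo $\widehat w$) forces the quadric to be (a square of) the linear form $\chi^A$ restricted to $\Gamma_w$, so that $B_w = \Gamma_w \cap \sH^A = \Gamma_w \cap \{\chi^A = 0\}$. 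The cleanest route is: a general $w$ lies on $Y$ but not in ${\rm Sing}(Y)$, so $\chi^A(w)$ need not vanish, whereas for $v \in B_w \subset {\rm Sing}(Y)$ one has $f_{vv} = 0$; using $A \in \aut(\widehat Y)^{(1)}$ and Proposition \ref{p.Gamma} applied at points of $\Gamma_w$ (which is a fiber of the Gauss map, so $\Gamma_v = \Gamma_w$ for general $v \in \Gamma_w$) one derives that $\chi^A$ vanishes identically on $B_w$ and is nonzero on $\Gamma_w$, forcing the equality $B_w = \Gamma_w \cap \sH^A$ by dimension count. Then $\sigma^A_{ww} \in \Gamma_w$ together with $f(\sigma^A_{ww}, w, \cdot) = 0$ places $\sigma^A_{ww}$ in ${\rm Null}(f_w) \subset \Gamma_w$; comparing with $B_w$ via the computation $\chi^A(\sigma^A_{ww}) = \chi(A_{\sigma^A_{ww}})$, which I expect to vanish because $\sigma^A_{ww}$ lies in a fiber of the Gauss map through a singular-locus direction, yields $\sigma^A_{ww} \in B_w$.

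Once $\sigma^A_{ww} \in B_w$ for general $w$, polarization gives $\sigma^A_{uv} \in \mathcal B$ for all $u, v$ (the values $\sigma^A_{ww}$ over general $w \in \widehat Y$ span a subspace of $\mathcal B$, and by full polarization over the Zariski-dense set of such $w$ one gets the whole image of $\sigma^A$ inside the linear span, which is $\mathcal B$ by definition), hence $\sigma^A \in \Hom(\Sym^2 W, \mathcal B)$. Finally, $B_w = \Gamma_w \cap \sH^A$ for all general $w$ immediately gives $B_w \subset \sH^A$ for each such $w$, and since $\mathcal B$ is the span of the $B_w$'s and $\sH^A$ is a linear subspace, $\mathcal B \subset \sH^A$.

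The main obstacle I anticipate is the middle step: rigorously showing $B_w = \Gamma_w \cap \sH^A$, i.e. that the degree-$\le 2$ hypersurface $\BP\Gamma_w \cap {\rm Sing}(Y)$ (which Proposition \ref{p.sB} already knows is a \emph{linear} hyperplane) is cut out precisely by the linear form $\chi^A$. This requires carefully exploiting that $A$ restricts to an endomorphism of $\Gamma_w$ (Proposition \ref{p.varphi}(3)) and tracking how $\chi^A$ and the quadratic form $f_{vv}|_{\Gamma_w}$ interact under the $\aut(\widehat Y)^{(1)}$-identity of Lemma \ref{l.Aut}(2), evaluated at vectors inside $\Gamma_w$. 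The constant $\tfrac14$ in the definition of $\sigma^A$ — as opposed to the $a$ or $\tfrac12$ appearing elsewhere — should emerge naturally from this computation as precisely the coefficient that makes $\sigma^A_{ww}$ land in ${\rm Null}(f_w)$ while simultaneously being $\chi^A$-annihilated; verifying that these two conditions are compatible and pin down the coefficient is the delicate point.
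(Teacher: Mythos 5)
Your opening computation is correct and is essentially the paper's first step in another guise: from Proposition \ref{p.Gamma}, $f(\sigma^A_{ww},w,u)=f(A_{ww},w,u)-\tfrac{\chi^A(w)}{2}f(w,w,u)=0$, so $\sigma^A_{ww}\in{\rm Null}(f_w)$ (the paper reaches the same point by intersecting the line $\{tw+A_{ww}\}$ with ${\rm Sing}(\widehat{Y})$ and computing a discriminant). But to convert this into $\sigma^A_{ww}\in B_w$ you need the identification $B_w={\rm Null}(f_w)$, which you never establish; instead you route the conclusion through $\chi^A(\sigma^A_{ww})=0$, which you only assert (``I expect to vanish because\dots'') on grounds that presuppose $\sigma^A_{ww}$ already lies in the singular locus. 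The missing link is in fact easy from the linearity of $B_w$ (Proposition \ref{p.sB}): writing $v=aw+n\in B_w$ with $n\in{\rm Null}(f_w)$, one has $f_{vv}=a^2f_{ww}+f_{nn}=0$, so $-aw+n$ also lies in $\Gamma_w\cap{\rm Sing}(\widehat{Y})=B_w$, hence $2aw\in B_w$, forcing $a=0$ since $w\notin{\rm Sing}(\widehat{Y})$; comparing codimensions in $\Gamma_w$ gives $B_w={\rm Null}(f_w)$, and then $\sigma^A_{ww}\in B_w$ follows from your computation.

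The more serious gap is the claim $B_w=\Gamma_w\cap\sH^A$, i.e.\ $\chi^A|_{B_w}=0$, which you flag as the delicate point but for which your proposed mechanism fails: applying Proposition \ref{p.Gamma} at a point $s\in B_w\subset{\rm Sing}(\widehat{Y})$ gives $f(A_{ss},s,u)=\tfrac{\chi^A(s)}{2}f_{ss}(u)=0$, which is vacuous since $f_{ss}=0$, and the quadratic form $f_{vv}|_{\Gamma_w}$ is not ``expressible in terms of $\chi^A$'' by any argument you supply. The paper's actual argument is different and you would need it: $A_w\in\aut(\widehat{Y})$ preserves ${\rm Sing}(\widehat{Y})$ and, by Proposition \ref{p.varphi}(3), also $\Gamma_w$, hence preserves $B_w$, so $A_{ws}\in B_w$ for $s\in B_w$; polarizing $\sigma^A_{vv}\in B_v=B_w$ over general $v$ in the fixed fiber $\Gamma_w$ gives $\sigma^A_{ws}\in B_w$; then $A_{ws}=\sigma^A_{ws}+\tfrac{\chi^A(w)}{4}s+\tfrac{\chi^A(s)}{4}w$ forces $\chi^A(s)w\in B_w$, hence $\chi^A(s)=0$ because $w\notin{\rm Sing}(\widehat{Y})$. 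Since $\chi^A\neq 0$ (Proposition \ref{p.injective}) and hence $\chi^A(w)\neq 0$ for general $w\in\widehat{Y}$, the hyperplane $\Gamma_w\cap\sH^A$ of $\Gamma_w$ contains $B_w$ and equals it. Your final step also needs ``$\widehat{Y}$ is Zariski-dense'' replaced by the observation that an irreducible cubic hypersurface satisfies no nontrivial quadratic equation before polarizing to get $\sigma^A\in\Hom(\Sym^2W,\sB)$, but that is minor.
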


\begin{proof}
Fix a general $w \in {\rm Sm}(\widehat{Y})$.
Recall that $A_{ww} \in \Gamma_w$ from Proposition \ref{p.Gamma}.
   From Proposition \ref{p.A_xx}, we know that $w$ and $A_{ww}$ are two linearly independent vectors in $\Gamma_w$. Thus the line $$\{w_t := tw + A_{ww}, t \in \C\} \subset \Gamma_w$$ will intersect $B_w$ at one point. The definition of $B_w$ in Proposition \ref{p.sB} implies that $w_t \in {\rm Sing}(\widehat{Y})$ for exactly one value of $t$. In other words, $$f(w_t, w_t, u) =0 \mbox{ for all } u \in W$$ has exactly one solution in $t$. But
   \begin{eqnarray*} f(w_t, w_t, u) &=& f(tw + A_{ww}, tw +  A_{ww}, u) \\
    &=& f(w, w, u) t^2 + 2 f(w, A_{ww}, u) t
   + f(A_{ww}, A_{ww}, u). \end{eqnarray*} The vanishing of  the discriminant of this quadratic equation in $t$ gives
   $$ f(w, A_{ww}, u)^2 = f(w,w,u) f(A_{ww}, A_{ww}, u) \mbox{ for } u \in W.$$    Using $f(w, w, u) \not\equiv 0$ and  Proposition \ref{p.Gamma}, we obtain $$
   f(A_{ww}, A_{ww}, u) = \left( \frac{\chi^A(w)}{2} \right)^2 f (w, w, u).$$
       The unique root  of the quadratic equation is
   $$ t = -  \frac{f(w, A_{ww}, u)}{f(w,w,u)} = - \frac{ \chi^A(w)}{2}$$ and for this value of $t$, we have $w_t = A_{ww} - \frac{\chi^A(w)}{2} w \in B_w.$ This shows that $\sigma^A_{ww} \in B_w$.

Note that  the endomorphism $A_w \in \aut(\widehat{Y}) \subset \End (W)$ preserves both ${\rm Sing}(\widehat{Y})$ and $\Gamma_w$ by Proposition \ref{p.varphi} (3). This implies that  $A_{ws} \in B_w$ for any  $s \in B_w.$  Then $$A_{ws} =   \frac{ \chi^A(w)}{4} s + \frac{\chi^A(s)}{4} w + \frac{\sigma_{ws}}{2}$$ implies
$\frac{\chi^A(s)}{4} w \in B_w.$ This shows $\chi^A(s) = 0$ because $w \not\in {\rm Sing}(\widehat{Y})$. Consequently, we have $B_w \subset \sH^A.$
\end{proof}

\begin{proposition}\label{p.check}
In the setting of Proposition \ref{p.sigma}, let $\Xi_o^a$ be the subset of $ \Xi^a_Y \subset
\aut(\widehat{Y})^{(1)}$ in Definition \ref{d.Xi}, consisting of elements $A \in \Xi^a_Y$ such that
the hyperplane $\BP \sH^A \subset \BP W$ of Proposition \ref{p.sigma} is sent to a hypersurface in $\BP W^*$ by the polar map $\Phi: \BP W \dasharrow \BP W^*$. In other words, an element $A \in \Xi^a_Y$ is in $\Xi_o^a$ if and only if the hyperplane
$\BP \sH^A$ is not contracted by $\Phi$. Assume that ${\rm Sec}({\rm Sing}(Y)) \neq Y$ and let $\sB$ be as in Proposition \ref{p.sB}.
If $a \neq \frac{1}{4}$ and  $A \in \Xi_o^a$, then $\sB = \sH^A$.
\end{proposition}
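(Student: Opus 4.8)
The plan is to show that the hypothesis $A\in\Xi_o^a$ forces the hyperplane $\sH^A$ to be spanned by the $B_w$'s, so that $\sB=\sH^A$. By Proposition \ref{p.sigma} we already know $\sB\subseteq\sH^A$, and since $\chi^A\neq 0$ (because $A\neq 0$ and the map $A\mapsto\chi^A$ is injective by Proposition \ref{p.injective}), $\sH^A$ is a genuine hyperplane; thus it suffices to prove $\sB\not\subsetneq\sH^A$, equivalently that $\sB$ is not contained in any hyperplane of $W$ other than $\sH^A$ — or more directly, that $\dim\sB\geq\dim\sH^A$. The first step is to set up the basic incidence: for general $w\in\widehat Y$, the fibre $\Gamma_w$ of the Gauss map has $\dim\Gamma_w\geq 2$ (Proposition \ref{p.A_xx} plus Proposition \ref{p.Gamma}), and $B_w=\Gamma_w\cap\sH^A$ has codimension $1$ in $\Gamma_w$; the affine tangent spaces $T_w\widehat Y$ are constant along $\Gamma_w$, and as $w$ ranges over $\widehat Y$ the union $\bigcup\Gamma_w$ sweeps out all of $\widehat Y$. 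So $\sH^A$ contains the span $\sB$ of all $B_w$, and I want to measure how much of $\sH^A$ this accounts for.

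Next I would bring in the hypothesis that $\BP\sH^A$ is \emph{not} contracted by the polar map $\Phi$. The key observation is that $\Phi(\BP\sH^A)$, being a hypersurface in $\BP W^*$, has dimension $\dim W-2$, while the general fibre of $\Phi$ restricted to $\BP\sH^A$ has dimension $(\dim W-2)-(\dim W-2)=\dim\sH^A-1-(\dim W-2)$; I need to relate the fibres of $\Phi|_{\BP\sH^A}$ to the spaces $B_w$. For a general $w\in\widehat Y$, the fibre of $\Phi$ through $[w]$ is $\BP\Gamma_w$ (Proposition \ref{p.Gauss}(2) together with Proposition \ref{p.Gauss}(1)), and intersecting with $\sH^A$ gives $\BP B_w$ as (an open subset of) the fibre of $\Phi|_{\BP\sH^A}$ through $[w]$ — here one must check that a general point of $\sH^A$ lies on $\widehat Y$, which holds because $\sB\subset\widehat Y\cap\sH^A$... more carefully, because $\sigma^A_{ww}\in B_w\subset\widehat{\mathrm{Sing}(Y)}\subset\widehat Y$ and these points are Zariski-dense in $\sB$, while $\sH^A\not\subset Y$ would give $\Phi(\BP\sH^A)$ dominant, contradicting that it is a hypersurface only if ... . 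This is where I expect to have to be careful: I must argue that $\BP\sH^A\subset Y$ (otherwise $\Phi$ on $\BP\sH^A$ would be the restriction of the dual map and its image could be handled by Corollary \ref{c.degdual}), or more precisely separate the two cases $\BP\sH^A\subset Y$ and $\BP\sH^A\not\subset Y$.

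Granting that a general fibre of $\Phi|_{\BP\sH^A}$ is $\BP B_w$, the dimension count reads
\begin{equation*}
\dim\sB\ \geq\ \dim\overline{\Phi(\BP\sH^A)}+\dim B_w\ =\ (\dim W-2)+\dim B_w,
\end{equation*}
and since $B_w$ has codimension $1$ in $\Gamma_w$ which has dimension $\geq 2$, while $\Gamma_w\subset T_w\widehat Y$ which has dimension $\dim W-1$, one gets $\dim\sB$ large enough to conclude — but the clean inequality I want is $\dim\sB\geq\dim W-1=\dim\sH^A$, which then forces $\sB=\sH^A$. The role of the hypothesis $a\neq\tfrac14$ enters precisely here: it guarantees, via Proposition \ref{p.sigma} and the shape of $\sigma^A$, that $\sigma^A_{ww}\neq 0$ for general $w$ (if $a=\tfrac14$ then $\sigma^A=0$ identically and $\sB$ could be much smaller), so that $\sB$ genuinely contains the sweep of the nonzero vectors $\sigma^A_{ww}\in B_w$ and the fibres $\BP B_w$ are positive-dimensional pieces filling up $\sH^A$.

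The main obstacle I anticipate is the fibre-dimension bookkeeping for $\Phi|_{\BP\sH^A}$: one must identify the general fibre with $\BP B_w$ rather than something larger or smaller, which requires knowing that $\sH^A$ meets $\widehat Y$ in a dense set of Gauss-general points and that the Gauss fibre does not jump when restricted to $\sH^A$. I would handle this by first establishing $\BP\sH^A\not\subset Y$ is impossible under $A\in\Xi_o^a$ and $\sB\subset\sH^A$ (using that $\sB$ is a large subvariety of $\widehat Y$ by Proposition \ref{p.sigma}, so $\sH^A\cap\widehat Y$ is a hypersurface in $\sH^A$ containing the dense set of points $\sigma^A_{ww}$), then running the count inside $Y$ where the Gauss map is well-understood. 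Once $\sB=\sH^A$ is forced, the proposition is proved.
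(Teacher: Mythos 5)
Your reduction of the statement to the inequality $\dim\sB\geq\dim W-1$ (given $\sB\subset\sH^A$ from Proposition \ref{p.sigma}) is the right target, but the mechanism you propose for proving it does not work, and it is not the one the paper uses. The central gap is your identification of the general fibre of $\Phi|_{\BP\sH^A}$ with $\BP B_w$. First, $\BP\sH^A\not\subset Y$ is automatic (an irreducible cubic form cannot vanish on a hyperplane), so a general point of $\BP\sH^A$ lies off $Y$ and its $\Phi$-fibre has nothing to do with the Gauss fibres $\Gamma_w$; indeed the very hypothesis $A\in\Xi_o^a$ says $\Phi(\BP\sH^A)$ is a hypersurface in $\BP W^*$, i.e.\ $\Phi|_{\BP\sH^A}$ is generically finite, so its general fibre is a point, not a positive-dimensional $\BP B_w$. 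Second, even the correct fibration of the union $\bigcup_w B_w$ is over the dual variety $Y^*$, which has codimension at least $2$ by Corollary \ref{c.degdual}; a count then gives $\dim\bigl(\bigcup_w\BP B_w\bigr)\leq\dim Y^*+\dim\BP B_w=\dim\BP W-3$, so the union of the $B_w$'s is a proper subvariety of $\BP\sH^A$ and no dimension count on it alone can force its \emph{linear span} $\sB$ to be all of $\sH^A$. Finally, your account of where $a\neq\tfrac14$ enters is incorrect: if $a=\tfrac14$ then $\sigma^A_{uv}=h^A(f_{uv})$, which is not identically zero in general.

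The paper's actual argument supplies exactly the missing nondegeneracy, via the homomorphism $h^A$. From $A_{ww}=2a\chi^A(w)w+h^A(f_{ww})$ and $A_{ww}=\tfrac{\chi^A(w)}{2}w+\sigma^A_{ww}$ one gets $\sigma^A_{ww}-h^A(f_{ww})=2(a-\tfrac14)\chi^A(w)\,w$; since $a\neq\tfrac14$ and $\chi^A(w)\neq 0$ for general $w\in\widehat{Y}$, this shows $w\in{\rm Im}(h^A)+\sB$ and hence ${\rm Im}(h^A)+\sB=W$. On the other hand, for $w\in\sH^A$ one has $h^A(f_{ww})=\sigma^A_{ww}\in\sB$, so $h^A$ maps the affine cone of the hypersurface $\Phi(\BP\sH^A)\subset\BP W^*$ into $\sB$; this is precisely where $A\in\Xi_o^a$ is used, and it forces ${\rm Im}(h^A)\cap\sB$ to have codimension at most $1$ in ${\rm Im}(h^A)$. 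The two facts together give $\codim_W\sB\leq 1$, hence $\sB=\sH^A$. You would need to replace your fibre-dimension bookkeeping with an argument of this kind.
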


\begin{proof}
Fix $A \in \Xi_o^a$ with $a \neq \frac{1}{4}$. We have
$$A_{uu} = 2 a \chi^A(u) u + h^A(f_{uu}) \mbox{ for } u \in W.$$
We claim that $ {\rm Im}(h^A) + \sB = W$.
From Proposition \ref{p.sigma}, we have for $w  \in \widehat{Y}$
\begin{equation}\label{e.hA}  2a \chi^A (w)w + h^A (f_{ww}) = A_{ww} = \frac{\chi^A(w)}{2} w + \sigma^A_{ww}.\end{equation}
It follows that $-h^A(f_{ww}) + \sigma^A_{ww} =  2(a -\frac{1}{4}) \chi^A(w) w. $ Since $a \neq \frac{1}{4}$ and $\chi^A(w) \neq 0$ for a general $w \in \widehat{Y}$, this implies the claim.

For $w \in \sH^A$, we have from (\ref{e.hA})  $$ h^A(f_{ww}) = A_{ww} = \sigma^A_{ww} \in \sB.$$  This implies that $h^A(\widehat{\Phi(\BP \sH^A)}) \subset \BP \sB,$ because $h^A (\widehat{\Phi([w])})
= h^A (\widehat{f_{ww}})$ for $[w] \not\in {\rm Sing}(Y).$
 Since $A \in \Xi_o^a$, this shows that ${\rm Im}(h^A) \cap \sB$ has codimension at most 1 in ${\rm Im}(h^A)$. Combining this with the above claim  $ {\rm Im}(h^A) + \sB = W,$ we see that  the linear subspace $\sB$ has codimension at most one in $W$.
But $\sB \subset \sH^A$ by Proposition \ref{p.sigma}. Thus we have $\sB = \sH^A$. \end{proof}

\begin{proposition}\label{p.1dim}
For an irreducible cubic hypersurface $Y \subset \BP W$ with nonzero Hessian satisfying $Y \neq {\rm Sec}({\rm Sing}(Y))$ and $a \neq \frac{1}{4}$, either $\dim \Xi^a_Y \leq 1$ or
the closure of $\Xi_o^a$ in $\Xi^a_Y$ is a 1-dimensional subspace. \end{proposition}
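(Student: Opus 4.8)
The plan is to deduce everything from two earlier facts: Proposition~\ref{p.check}, which forces $\sH^A=\sB$ for every $A\in\Xi_o^a$, and Proposition~\ref{p.injective}, the injectivity of the map $A\mapsto\chi^A$. If $\dim\Xi_Y^a\le 1$ the first alternative holds, so I would assume $\dim\Xi_Y^a\ge 2$ and fix linearly independent $A,A'\in\Xi_Y^a$. By Proposition~\ref{p.injective} the functionals $\chi^A,\chi^{A'}\in W^*$ are linearly independent, so the hyperplanes $\{\sH^{A+tA'}\}_{t\in\C}\cup\{\sH^{A'}\}$ form a genuine pencil in $\BP W$. The whole point is then to show that at least one member of this pencil lies in $\Xi_o^a$; once that is done the rest is formal.

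To produce a member of $\Xi_o^a$, recall that since $Y$ has nonzero Hessian the polar map $\Phi\colon\BP W\dasharrow\BP W^*$ is dominant between projective spaces of the same dimension $n=\dim\BP W$. I claim such a $\Phi$ cannot contract every hyperplane in a pencil $\{H_\lambda\}_{\lambda\in\BP^1}$. Each $H_\lambda$ satisfies $H_\lambda\not\subset{\rm Sing}(Y)$, because ${\rm Sing}(Y)$ is a proper subset of the hypersurface $Y$ and hence has dimension $\le n-2$, so each $\Phi|_{H_\lambda}$ is an honest rational map. If all of them were contracting, i.e.\ $\dim\overline{\Phi(H_\lambda)}\le n-2$ for every $\lambda$, then the incidence variety
$\mathcal{Z}:=\overline{\{(\lambda,y)\in\BP^1\times\BP W^*:\ y\in\Phi(H_\lambda\setminus{\rm Sing}(Y))\}}$
would have dimension at most $1+(n-2)=n-1$, whereas its projection to $\BP W^*$ equals $\overline{\Phi(\BP W\setminus{\rm Sing}(Y))}=\BP W^*$ (the $H_\lambda$ cover $\BP W$), of dimension $n$ — a contradiction. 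Applying this to the pencil above, some element $A_0$ of the pencil has $\BP\sH^{A_0}$ not contracted by $\Phi$, that is, $A_0\in\Xi_o^a$ (and $A_0\neq 0$, since $\sH^{A_0}$ is a genuine hyperplane).

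Now I would finish as follows. By Proposition~\ref{p.check}, $\sH^{A_0}=\sB$; in particular $\sB$ is a hyperplane, so $\ker\chi_\sB=\sB$ for some nonzero $\chi_\sB\in W^*$, unique up to scalar, and $\chi^{A_0}$ is a nonzero multiple of $\chi_\sB$. Put $\Lambda:=\{A\in\Xi_Y^a:\ \chi^A\in\C\,\chi_\sB\}$, a linear subspace containing $A_0\neq 0$. Every $A\in\Xi_o^a$ has $\ker\chi^A=\sH^A=\sB$ by Proposition~\ref{p.check}, hence $\chi^A\in\C\,\chi_\sB$ and $A\in\Lambda$; combined with the injectivity of $A\mapsto\chi^A$ this forces $\dim\Lambda=1$. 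Conversely, any nonzero $A\in\Lambda$ has $\chi^A=c\,\chi_\sB$ with $c\neq 0$, hence $\sH^A=\sB=\sH^{A_0}$, which is not contracted, so $A\in\Xi_o^a$. Thus $\Xi_o^a=\Lambda\setminus\{0\}$, whose closure in $\Xi_Y^a$ is the 1-dimensional subspace $\Lambda$, as required.

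The only step that needs an idea is the pencil argument of the second paragraph — the statement that a dominant rational map between equidimensional projective spaces cannot contract every divisor in a one-parameter linear system — and I expect that to be the main (though standard) obstacle; the remaining manipulations are bookkeeping with Propositions~\ref{p.check} and~\ref{p.injective}.
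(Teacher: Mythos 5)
Your proposal is correct and follows essentially the same route as the paper: use the injectivity of $A\mapsto\chi^A$ (Proposition \ref{p.injective}) to get a pencil of hyperplanes covering $\BP W$, conclude from dominance of $\Phi$ that not all of them can be contracted so $\Xi_o^a\neq\emptyset$, and then use Proposition \ref{p.check} to force all elements of $\Xi_o^a$ to be proportional. Your incidence-variety dimension count and the $\Lambda$-bookkeeping at the end merely make explicit the steps the paper leaves implicit.
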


\begin{proof}
Suppose that $\dim \Xi^a_Y \geq 2$.
By Proposition \ref{p.injective}, the family of hyperplanes $\sH^A, A \in \Xi^a_Y$ cover
$W$. Thus for some $A \in \Xi^a_Y$, the hyperplane $\BP \sH^A$ is not contracted by $\Phi$.
We conclude that $\Xi_o^a \neq \emptyset$.  By Proposition \ref{p.check},
all elements of $\Xi_o^a, a \neq \frac{1}{4}$, must be proportional to each other.
It follows that the closure of $\Xi_o^a$ is a 1-dimensional subspace. \end{proof}

\begin{proposition}\label{p.Y^+}
  In the setting of Proposition \ref{p.1dim}, assume that $\Xi^a_Y \neq 0$ for some $a \neq \frac{1}{4}$. Then there exists $A \in \Xi^a_Y$ such that the image
 $h^A(\widehat{Y}^*) \subset W$ of the affine cone of the dual variety $\widehat{Y}^* \subset W^*$   is invariant under the action of  ${\rm Aut}_o(\widehat{Y}) \subset {\bf GL}(W).$
 Consequently,  for any $\varphi \in \aut(\widehat{Y}) \subset \End(W)$ and a general point $w \in h^A(\widehat{Y}^*)$, the vector $\varphi(w)$ is contained in $T_w h^A(\widehat{Y}^*).$ \end{proposition}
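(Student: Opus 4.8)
The plan is to use the uniqueness of $h^A$ (Proposition \ref{p.unique}) together with the $\mathrm{Aut}_o(\widehat Y)$-invariance of the relevant subspaces to pin down a single line in $\Xi^a_Y$ stable under the group action, and then transport the invariance from $\Xi^a_Y$ to the image $h^A(\widehat Y^*)$. First I would recall that by Lemma \ref{l.action} and Proposition \ref{p.invariance} the subspace $\Xi^a_Y \subset \aut(\widehat Y)^{(1)}$ is preserved by $\mathrm{Aut}_o(\widehat Y)$; moreover the condition defining $\Xi_o^a$ — that $\BP\sH^A$ is not contracted by the polar map $\Phi$ — is itself $\mathrm{Aut}_o(\widehat Y)$-invariant, since $\chi^{g\cdot A}=g\cdot\chi^A$ (so $\sH^{g\cdot A}=g\cdot\sH^A$) and $\Phi$ is equivariant for the induced action of $\mathrm{Aut}_o(\widehat Y)$ on $\BP W^*$ (because $g$ scales $f$ by $e^\chi(g)$). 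Hence $\Xi_o^a$ and its closure $\overline{\Xi_o^a}$ in $\Xi^a_Y$ are both $\mathrm{Aut}_o(\widehat Y)$-invariant.

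Next, since $\Xi^a_Y \neq 0$, Proposition \ref{p.1dim} tells us that either $\dim\Xi^a_Y \le 1$ or $\overline{\Xi_o^a}$ is a $1$-dimensional subspace. In the first case $\Xi^a_Y$ is itself a line (it is nonzero), and it is $\mathrm{Aut}_o(\widehat Y)$-invariant by Proposition \ref{p.invariance}; in the second case $\overline{\Xi_o^a}$ is an $\mathrm{Aut}_o(\widehat Y)$-invariant line inside $\Xi^a_Y$. Either way we obtain a nonzero $A \in \Xi^a_Y$ spanning an $\mathrm{Aut}_o(\widehat Y)$-invariant line $\C\cdot A$. Then for each $g \in \mathrm{Aut}_o(\widehat Y)$ we have $g\cdot A = c(g)\,A$ for some scalar $c(g)\in\C^*$. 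Comparing the formula $(g\cdot A)_{uu} = 2a\,\chi^{g\cdot A}(u)\,u + h^{g\cdot A}(f_{uu})$ with $c(g)\bigl(2a\,\chi^A(u)\,u + h^A(f_{uu})\bigr)$ and using $\chi^{g\cdot A}=g\cdot\chi^A$, the uniqueness statement of Proposition \ref{p.unique} forces $h^{g\cdot A} = c(g)\,h^A$; on the other hand, from the proof of Proposition \ref{p.invariance} we have the explicit formula $h^{g\cdot A} = g\circ h^A\circ\bigl(e^\chi(g^{-1})\,g^{-1}\bigr)$. Equating, $g\circ h^A\circ\bigl(e^\chi(g^{-1})\,g^{-1}\bigr) = c(g)\,h^A$ as maps $W^*\to W$, so in particular $g\bigl(h^A(W^*)\bigr) = h^A(W^*)$ and, more precisely, $g$ carries $h^A(\lambda)$ to a scalar multiple of $h^A(g\cdot\lambda)$ for every $\lambda \in W^*$.

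Now apply this to $\lambda$ ranging over the affine cone $\widehat Y^* \subset W^*$ of the dual variety. Since $\widehat Y^*$ is $\mathrm{Aut}_o(\widehat Y)$-invariant (the dual variety is intrinsically attached to $Y$, and the action is equivariant), the relation $g\bigl(h^A(\lambda)\bigr) \in \C\cdot h^A(g\cdot\lambda)$ shows that the cone $h^A(\widehat Y^*)$ is stable under every $g\in\mathrm{Aut}_o(\widehat Y)$. This is the asserted invariance. The final "consequently" clause is then the standard infinitesimal consequence: differentiating a one-parameter subgroup $g_t = \exp(t\varphi)$ with $\varphi\in\aut(\widehat Y)$ at $t=0$, the invariance of the cone $h^A(\widehat Y^*)$ under $g_t$ gives $\varphi(w) \in T_w\bigl(h^A(\widehat Y^*)\bigr)$ for $w$ a smooth point of the cone, i.e. for a general $w\in h^A(\widehat Y^*)$.

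The main obstacle I anticipate is the bookkeeping in the case split of Proposition \ref{p.1dim}: one must make sure that when $\dim\Xi^a_Y \le 1$ the chosen $A$ still has a well-defined $h^A$ (which needs $A\neq 0$, guaranteed since $\Xi^a_Y\neq0$) and when $\overline{\Xi_o^a}$ is the relevant line one is entitled to pick $A$ in it rather than merely in $\Xi_o^a$ — here one uses that the defining identity $A_{uu}=2a\chi^A(u)u+h^A(f_{uu})$ and the uniqueness of $h^A$ both pass to the Zariski closure, so the conclusion $h^{g\cdot A}=c(g)h^A$ holds on all of $\overline{\Xi_o^a}$. Beyond that, the argument is a routine equivariance computation built on Propositions \ref{p.unique}, \ref{p.invariance} and \ref{p.1dim}.
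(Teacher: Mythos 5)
Your proposal is correct and follows essentially the same route as the paper: both arguments use Proposition \ref{p.1dim} to produce an ${\rm Aut}_o(\widehat{Y})$-invariant line in $\Xi^a_Y$ (either $\Xi^a_Y$ itself when it is one-dimensional, or the closure of $\Xi^a_o$ otherwise, using equivariance of the polar map) and then deduce invariance of $h^A(\widehat{Y}^*)$ from $g\cdot A = c(g)A$. Your explicit justification of the last step via Proposition \ref{p.unique} and the formula $h^{g\cdot A}=g\circ h^A\circ(e^{\chi}(g^{-1})g^{-1})$ is a detail the paper leaves implicit, but it is the same argument.
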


  \begin{proof}
   If $\dim \Xi^a_Y =1$, choose any nonzero element of $\Xi^a_Y$ as $A$.   The natural action in Proposition \ref{p.invariance} of the Lie group ${\rm Aut}_o(\widehat{Y}) \subset {\bf GL}(W)$ on the 1-dimensional space $\Xi^a_Y$ sends $A$ to constant multiples of $A$. Thus the action of ${\rm Aut}_o(\widehat{Y})$ on $W$  preserves the set $h^A(\widehat{Y}^*)$.

   If $\dim \Xi^a_Y \geq 2$, then the closure of $\Xi^a_o$ is a 1-dimensional subspace in $\Xi^a_Y$
   by Proposition \ref{p.1dim}. Choose any nonzero element of $\Xi^a_o$ as $A$. It is clear that the action of ${\rm Aut}_o(\widehat{Y})$ on $\Xi^a_Y$ in Proposition \ref{p.invariance} preserves $\Xi^a_o$, because the polar map $\Phi: \BP W \dasharrow \BP W^*$ is equivariant under the natural actions of ${\rm Aut}_o(\widehat{Y})$. Thus ${\rm Aut}_o(\widehat{Y})$ sends $A$ to constant multiplies of $A$, which implies that it preserves $h^A(\widehat{Y}^*)$. \end{proof}

Now we are ready to prove Theorem \ref{t.cubic1}.

\begin{proof}[Proof of Theorem \ref{t.cubic1}] Assuming the contrary, we are in the situation of Proposition \ref{p.Y^+}. We can assume that $A$ is chosen as in Proposition \ref{p.Y^+}.

Fix a general $w \in {\rm Sm}(\widehat{Y})$ and write $s:= h^A(f_{ww}).$  Let $B_w \subset \Gamma_w$ and $\sigma^A$  be as in Proposition \ref{p.sB} and Proposition \ref{p.sigma}. We make the following three claims (C1) - (C3).

\begin{itemize}
\item[(C1)]  $s \in \Gamma_w \setminus B_w$.  \item[(C2)] $s \in {\rm Sm}(h^A(\widehat{Y}^*))$ and $\Gamma_w \cap T_s h^A(\widehat{Y}^*) = \widehat{s}.$
    \item[(C3)] $A_{su} = 0$ if $u \in B_w$.
   \end{itemize}

    Let us prove the claims.  Applying Proposition \ref{p.sigma}, we obtain
  $$2a \chi^A(w) w + h^A(f_{ww}) = A_{ww} = \frac{\chi^A(w)}{2} w + \sigma^A_{ww}.$$ Consequently,
    $$h^A(f_{ww}) + 2 (a-\frac{1}{4}) \chi^A(w) w = \sigma^A_{ww}  \in B_w.$$ Since $a \neq \frac{1}{4}$ and $w \not\in B_w \subset {\rm Sing}(\widehat{Y})$ from $w \in {\rm Sm}(\widehat{Y})$, we have  $s = h^A(f_{ww}) \not\in B_w$. On the other hand,
     Proposition \ref{p.Gamma} gives
    $$s= h^A(f_{ww}) = A_{ww} - 2a \chi^A(w) w \in \Gamma_w.$$ This proves (C1).

  Since $\Gamma_w$ is a general fiber of the Gauss map  $\Phi|_Y : Y \dasharrow Y^*,$ the claim (C1) implies that $h^A$ induces a rational map $\sigma: Y^* \dasharrow Y$ which gives a regular section of  the morphism  $$\Phi|_{{\rm Sm}(Y)} : {\rm Sm}(Y) \to Y^*$$
   over an open subset of $Y^*$. Thus by the generality of $w \in {\rm Sm}(\widehat{Y})$, we see that $[h^A(f_{ww})]$ is a nonsingular point of $
   \sigma(Y^*)$ where the fiber $\BP \Gamma_w$ and $\sigma(Y^*)$ intersect transversally.
   This proves (C2).

Pick $u \in B_w$ and let $\varphi= A_u \in {\rm aut}(\widehat{Y})$.  Then $\varphi(w) \in \Gamma_w$ and we have
$\varphi(s) \in \Gamma_w$ by Proposition \ref{p.varphi}.  On the one hand, by the choice of $A$ in Proposition \ref{p.Y^+},
we have $\varphi(s) \in T_s h^A(\widehat{Y}^*).$ Thus by the claim (C2), we have $A_{us} \in \widehat{s}$. On the other hand, the endomorphism $A_s$ must be tangent to ${\rm Sing}(\widehat{Y})$. Thus $A_{su} \in B_u$.
Since $\widehat{s} \cap B_w =0$, we conclude that $A_{us} =0$, proving (C3).

   \medskip
   Now, for a general point $v \in \Gamma_w$, we have $f_{vv} \in \C \cdot f_{ww}$ and
   $h^A(f_{vv}) = q_{vv} s$ for some $q_{vv} \in \C$. By continuity, this must hold for all $v \in \Gamma_w$. Consequently, we have a quadratic form $q$ on $\Gamma_w$ such that
   $$ A_{vv} = 2a \chi^A(v) v + q_{vv} s\mbox{ for } v \in \Gamma_w$$ and by polarizing it, \begin{equation}\label{e.q} A_{uv} = a \chi^A(u) v + a \chi^A(v) u + q_{uv} s\mbox{ for } u, v \in \Gamma_w \end{equation}

  Choosing $0 \neq u \in B_w$  and $ v= s$ in (\ref{e.q}), the claim (C3) gives $$0= A_{us} = a \chi^A(s) u + q_{us} s.$$ Thus $$0 \neq a \chi^A(s) u = - q_{us} s.$$ Since the righthand side cannot be in $B_w$ unless $q_{us} =0$ by the claim (C1), we have a contradiction. \end{proof}

\section{Proof of Theorem \ref{t.cubic} from ${\rm Sec}({\rm Sing}(Y)) = Y$}\label{s.cubic2}

  Theorem \ref{t.cubic} follows from Theorem \ref{t.cubic1} and the next theorem.

\begin{theorem}\label{t.cubic2}
 Let $Y \subset \BP W$ be an irreducible cubic  with nonzero Hessian defined by a cubic form $f \in \Sym^3 W^*$.
 Assume that \begin{itemize} \item[(a)] the singular locus
${\rm Sing}(Y)$ is nonsingular; and \item[(b)]
there exists a nonzero  $ A \in \aut(\widehat{Y})^{(1)}$ of the form
    $$A_{uv} = a \ \chi^A (u) v + a \ \chi^A (u) v + h( f_{uv}) \mbox{ for } u, v \in W,$$
for some $h \in \Hom (W^*, W)$ and
$a \in \C, a \neq \frac{1}{4}.$ \end{itemize}
  If $Y= {\rm Sec}({\rm Sing}(Y))$, then there exists an irreducible component $S$ of ${\rm Sing}(Y)$ such that $Y= {\rm Sec}(S)$. \end{theorem}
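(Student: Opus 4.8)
The plan is to analyze the structure of $\mathrm{Sing}(Y)$ using the prolongation element $A$ and, more importantly, the affine Gauss map. Since $Y = \mathrm{Sec}(\mathrm{Sing}(Y))$, a general point $[w]\in Y$ lies on a secant line $\overline{xy}$ joining two points of $\mathrm{Sing}(Y)$. By Corollary \ref{c.degdual}, the Gauss map $\gamma_Y$ is degenerate, and by Proposition \ref{p.Gauss} (2) its general fiber through $w$ is the linear space $\mathbb{P}\Gamma_w$ of dimension $\geq 1$. The first step is to understand how these Gauss fibers meet $\mathrm{Sing}(Y)$: for general $w$, Proposition \ref{p.Gauss} (3) tells us $\mathbb{P}\Gamma_w\cap\mathrm{Sing}(Y)$ is a quadric hypersurface (or linear subspace) in $\mathbb{P}\Gamma_w$; since $Y=\mathrm{Sec}(\mathrm{Sing}(Y))$, this quadric must in fact span all of $\mathbb{P}\Gamma_w$ (otherwise its secants would not fill $\mathbb{P}\Gamma_w$ and hence not fill $Y$), so it is either a genuine rank-$\geq 3$ quadric or a union of two hyperplanes in $\mathbb{P}\Gamma_w$. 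I would study both possibilities, but the expected situation is that the generic Gauss fiber is a conic/quadric bisecant to a single component of $\mathrm{Sing}(Y)$.

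Second, I would set up the incidence correspondence between $Y$ and the irreducible components $S_1,\dots,S_k$ of $\mathrm{Sing}(Y)$ via secant lines, and argue that there is a single component $S=S_i$ whose secant variety already equals $Y$. Because $Y$ is irreducible of dimension $\dim W - 2$, and $\mathrm{Sec}(S_i)\subset Y$ for each $i$ with $\bigcup_i \mathrm{Sec}(S_i)$ together with the lines joining distinct components covering $Y$, a dimension count combined with irreducibility of $Y$ forces either one $\mathrm{Sec}(S_i)=Y$, or else $Y$ is covered by lines joining two \emph{distinct} components $S_i, S_j$. In the latter case I would use the assumption (a) that $\mathrm{Sing}(Y)$ is nonsingular — hence each $S_i$ is smooth and the components are disjoint — together with the tangency behavior of the Gauss map along secant lines (each secant $\overline{xy}$, $x\in S_i$, $y\in S_j$, is contracted by $\gamma_Y$ to a point, and Proposition \ref{p.varphi} and Proposition \ref{p.sB}-style arguments constrain the tangent spaces $T_x S_i$, $T_y S_j$ relative to $\Gamma_w$). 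The nonsingularity of $\mathrm{Sing}(Y)$ as a reduced set is exactly what makes the tangent spaces to components behave well enough to run this; this is where (a) enters crucially, as advertised in the introduction.

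Third, once it is established that $Y = \mathrm{Sec}(S)$ for an irreducible nonsingular $S=\mathrm{Sing}(Y)$ (or one of its components), I would verify that $S$ is nondegenerate in $\mathbb{P}W$ (if not, $Y$ would be a cone, contradicting nonzero Hessian via (ND1)) and that $\aut(\widehat{S})^{(1)}\neq 0$. The nonvanishing of $\aut(\widehat S)^{(1)}$ should follow by restricting the given $A\in\aut(\widehat Y)^{(1)}$, or more precisely the homomorphism $h^A\colon W^*\to W$ appearing in (b): the image $h^A(\widehat{Y}^*)$ is, by the analysis in Section \ref{s.cubic1}, closely related to $\widehat S$ — indeed $\sigma^A$ lands in the span $\mathcal{B}$ of the $B_w$'s which are the intersections $\mathbb{P}\Gamma_w\cap\mathrm{Sing}(Y)$ — and the prolongation structure on $Y$ should descend to give a nonzero prolongation on $S$. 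Having both, Theorem \ref{t.Severi} applies directly: $S$ is one of the four Severi varieties and $Y = \mathrm{Sec}(S)$ is the secant of a Severi variety, completing the proof.

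The main obstacle I anticipate is the second step: ruling out, or rather correctly analyzing, the case where $Y$ is built from secants joining two genuinely distinct components of $\mathrm{Sing}(Y)$, and more generally controlling how $\mathrm{Sing}(Y)$ decomposes. The subtlety flagged in the introduction — that $\mathrm{Sing}(Y)$ is nonsingular only as a \emph{reduced} set, not as a scheme — means one cannot simply read off the local structure of $Y$ along $\mathrm{Sing}(Y)$ from a transverse slice; one has to work with the Gauss map and the geometry of the linear Gauss fibers $\mathbb{P}\Gamma_w$ and their quadric traces on $\mathrm{Sing}(Y)$, patching this global information together. Keeping track of which component a general Gauss fiber is bisecant to, and showing this component is independent of the (general) choice of $w$, is the technical heart of the argument.
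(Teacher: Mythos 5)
Your opening reduction is correct and matches the paper: since $\mathrm{Sec}(\mathrm{Sing}(Y))$ is the finite union of the irreducible joins $\mathrm{Join}(S_i,S_j)$ over pairs of components, irreducibility of $Y$ forces $Y=\mathrm{Join}(S,S')$ for a single pair, and the theorem is equivalent to ruling out the case $S\neq S'$. This is exactly Lemma \ref{L.1}(1) of the paper. Your third step (nondegeneracy of $S$, $\aut(\widehat S)^{(1)}\neq 0$, and the appeal to Theorem \ref{t.Severi}) is not part of Theorem \ref{t.cubic2} at all --- it belongs to the deduction of Theorem \ref{t.cubic} from it --- so it neither helps nor hurts here.

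The genuine gap is that you never actually rule out the two-component case; you name it as ``the main obstacle I anticipate'' and gesture at ``tangency behavior of the Gauss map'' and ``Proposition \ref{p.sB}-style arguments,'' but give no argument that would close it. Moreover, the mechanism the paper uses is not the tangent-space/Gauss-fiber patching you describe: it is a chain of algebraic consequences of the \emph{specific form} of $A$ in hypothesis (b), which your sketch never invokes. Concretely, the paper first shows $A_{uv}=0$ for $u\in\langle S\rangle$, $v\in\langle S'\rangle$ and $a\chi^A\neq 0$ (Lemma \ref{L.2}); then that $R=\langle S\rangle\cap\langle S'\rangle$ lies in $\widehat Y$ but not in $\mathrm{Sing}(\widehat Y)$ --- this is precisely where assumption (a) enters, since $R\subset\mathrm{Sing}(\widehat Y)$ would force distinct components of $\mathrm{Sing}(Y)$ to meet --- and that $h^A(f_{rr})=0$ on $R$, so $\mathrm{Ker}(h^A)\neq 0$ (Lemma \ref{L.3}); then, via an elementary linear-algebra lemma, that $\chi^A$ annihilates one of $\widehat S,\widehat S'$ (Lemma \ref{L.4}); and finally a two-case analysis on whether $A$ vanishes identically on the hyperplane $\sH=\{\chi^A=0\}$, ending with the restriction $f|_{\sH}$ degenerating to $2H_1\cup H_2$ or $3H_1$ and a contradiction in each subcase. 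Without some substitute for this chain --- in particular without ever using the hypothesis $A_{uv}=a\chi^A(u)v+a\chi^A(v)u+h(f_{uv})$ --- your proposal does not constitute a proof; it is an accurate description of the problem rather than a solution to it.
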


 In fact, in the setting of Theorem \ref{t.cubic}, Theorem \ref{t.cubic1} implies that we may assume the condition $Y= {\rm Sec}({\rm Sing}(Y))$ required for Theorem \ref{t.cubic2}. Then $S$ in Theorem \ref{t.cubic2} is an irreducible nondegenerate nonsingular  variety with $\aut(\widehat{S})^{(1)}
  = \aut(\widehat{Y})^{(1)} \neq 0$. Thus by Theorem \ref{t.Severi}, we see that $S$ is a Severi variety. This proves Theorem \ref{t.cubic}.

For the proof of Theorem \ref{t.cubic2}, we use
the following two elementary lemmata from linear algebra.

\begin{lemma}\label{l.UV}
Let $\sH \subset W$ be a hyperplane in a vector space $W$ defined by a nonzero linear functional $\lambda \in W^*$. Suppose that we have two subspaces $U, V \subset W$ such that
$$U \not\subset \sH, \ V \not\subset \sH \mbox{ and } W = U + V.$$
Then the linear span of the set $$\{ \lambda (v) \ u + \lambda (u) \ v, \ u \in U, v \in V\}$$
has codimension at most 1 in  $W$. \end{lemma}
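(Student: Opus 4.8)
\textbf{Proof plan for Lemma \ref{l.UV}.}
The plan is to reduce to a concrete basis-level computation. First I would pick a basis adapted to the filtration: choose a vector $u_0 \in U$ with $\lambda(u_0) \neq 0$ and a vector $v_0 \in V$ with $\lambda(v_0) \neq 0$, which exist by hypothesis. Normalize so that $\lambda(u_0) = \lambda(v_0) = 1$; then the element $\lambda(v_0)u_0 + \lambda(u_0)v_0 = u_0 + v_0$ lies in the span $T$ of the set in question. More importantly, for any $u \in U \cap \sH$ we get $\lambda(v_0)u + \lambda(u)v_0 = u \in T$, so $U \cap \sH \subset T$; symmetrically $V \cap \sH \subset T$.

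The key step is then to observe that $(U \cap \sH) + (V \cap \sH)$ together with the single vector $u_0 + v_0$ already spans a subspace of codimension at most one. Indeed, let $W' := (U\cap\sH) + (V\cap\sH)$. Any $w \in W$ can be written $w = u + v$ with $u \in U$, $v \in V$ (using $W = U+V$); setting $c := \lambda(w) = \lambda(u) + \lambda(v)$, I replace $u$ by $u - \lambda(u)u_0 \in U \cap \sH$ and $v$ by $v - \lambda(v)v_0 \in V\cap\sH$, so that
$$ w = (u - \lambda(u)u_0) + (v - \lambda(v)v_0) + \lambda(u)u_0 + \lambda(v)v_0, $$
where the first two summands lie in $W'$ and the last group equals $\lambda(u)(u_0 - v_0) + c v_0 \in W' + \C(u_0-v_0) + \C v_0$. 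Since $u_0 - v_0 \in U \cap \sH$ when... no: $\lambda(u_0 - v_0) = 0$, so $u_0 - v_0 \in \sH$, but it need not lie in $U$ or $V$ alone. The cleaner bookkeeping is: modulo $W'$, every element of $W$ is congruent to a scalar multiple of $u_0$ plus a scalar multiple of $v_0$, i.e. $W/W'$ is spanned by the images of $u_0$ and $v_0$; and since $\lambda(u_0) = \lambda(v_0)$, the image of $u_0 - v_0$ in $W/W'$ lies in $\sH/(W' \cap \sH)$, which forces $\dim(W/W') \le 2$ with the extra relation cutting it to $\le 1$ once we note $u_0 + v_0 \in T$ pins down one more direction. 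Thus $\dim(W/T) \le \dim(W/W') - 1 \le 1$, giving codimension at most $1$.

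I expect the main obstacle to be purely organizational: keeping straight which of the vectors $u_0$, $v_0$, $u_0\pm v_0$ lie in which subspace, and verifying that exactly one dimension survives rather than two. A clean way to package this is to work in the quotient $\overline{W} := W / \big((U\cap\sH)+(V\cap\sH)\big)$, where $\overline{U}$ and $\overline{V}$ become at most one-dimensional (spanned by $\overline{u_0}$, $\overline{v_0}$ respectively) with $\overline{W} = \overline{U} + \overline{V}$, so $\dim \overline{W} \le 2$; then the image of $T$ in $\overline{W}$ contains $\overline{u_0} + \overline{v_0} \ne 0$, reducing $\dim(\overline{W}/\overline{T}) \le 1$, whence $\codim_W T \le 1$. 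This also transparently handles the degenerate cases where $\overline{u_0}$ and $\overline{v_0}$ coincide or where $\dim\overline{W} \le 1$ already.
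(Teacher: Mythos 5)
Your proof is correct, and it takes a genuinely different (primal) route from the paper's (dual) one. The paper argues on annihilators: any $\eta\in W^*$ killing the span must satisfy $\lambda(v)\eta(u)+\lambda(u)\eta(v)=0$, which forces $\eta|_U=c\,\lambda|_U$ and $\eta|_V=-c\,\lambda|_V$ for a single constant $c$, so the annihilator is at most one-dimensional. You instead exhibit explicit vectors inside the span $T$: all of $U\cap\sH$ and $V\cap\sH$ (by pairing with $v_0$, resp.\ $u_0$), plus $u_0+v_0$, and then count dimensions in $W/\bigl((U\cap\sH)+(V\cap\sH)\bigr)$, which is spanned by $\overline{u_0},\overline{v_0}$ and hence has dimension at most $2$; since $\overline{u_0+v_0}\neq 0$ (it does not lie in $\sH$, while $(U\cap\sH)+(V\cap\sH)$ does), the codimension of $T$ drops to at most $1$. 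Both arguments are elementary and about the same length; yours has the merit of producing a concrete codimension-$\le 1$ subspace contained in $T$, while the paper's version identifies exactly which functional could annihilate $T$ (namely the one agreeing with $c\lambda$ on $U$ and $-c\lambda$ on $V$, when that is consistent), which makes it transparent when the codimension is $1$ rather than $0$. The false start in your middle paragraph (the aborted claim about $u_0-v_0$) should be deleted in a final write-up, but the quotient-space packaging in your last paragraph is the clean and complete version of the argument.
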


\begin{proof}
  Suppose $\eta \in W^*$ is  a linear functional annihilating the linear span. Then $$\lambda(v) \ \eta(u ) + \lambda (u) \ \eta(v) = 0 \mbox { for any } u \in U, \ v \in V.$$ So we have a constant $c \in \C$ satisfying
 $$c = \frac{\eta(u)}{\lambda(u)}  = - \frac{\eta (v)}{\lambda (v)}    \mbox{ for any } u \in U \setminus \sH, \ v \in V \setminus \sH. $$
 Then $$\eta(u) = c \lambda (u) \mbox{ for all } u \in U \mbox{ and }
 \eta(v) = -c \lambda (v) \mbox{ for all } v \in V.$$
Thus the linear functional $\eta$ is uniquely determined up to a scalar multiple. \end{proof}

\begin{lemma}\label{l.Aphi}
For a vector space $W$ and the hyperplane $\sH \subset W$ defined by a nonzero linear functional $\lambda \in W^*$, suppose
 $A \in \Hom (S^2 W, W)$  satisfies $A_{uv} =0$ if $u, v \in \sH$. Fix a vector $p \in W$ satisfying $\lambda(p) = 1$. Then there exists $\varphi \in \End(W)$ such that $\varphi(p) =0$ and $$A_{uv} = \lambda(u) \varphi(v) + \lambda(v) \varphi(u) + \lambda(u) \lambda(v) A_{pp} \mbox{ for any } u, v \in W.$$ \end{lemma}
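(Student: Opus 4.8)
The plan is to exploit the direct sum decomposition $W = \sH \oplus \C p$, which holds because $\lambda(p) = 1$. Writing an arbitrary vector $u \in W$ as $u = u_0 + \lambda(u)\,p$ with $u_0 := u - \lambda(u)\,p \in \sH$, I would expand $A_{uv}$ using bilinearity and symmetry of $A$:
\[
A_{uv} = A_{u_0 v_0} + \lambda(v)\, A_{u_0 p} + \lambda(u)\, A_{v_0 p} + \lambda(u)\lambda(v)\, A_{pp}.
\]
Since $u_0, v_0 \in \sH$, the hypothesis $A_{uv} = 0$ for $u, v \in \sH$ forces the first term $A_{u_0 v_0}$ to vanish.

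Next I would define $\varphi \in \End(W)$ by $\varphi(u) := A_{u_0,\, p} = A_{up} - \lambda(u)\, A_{pp}$. This is manifestly linear in $u$, since $u \mapsto u_0$ is linear and $A$ is bilinear. Moreover $\varphi(p) = A_{pp} - \lambda(p)\, A_{pp} = 0$, using $\lambda(p) = 1$. Substituting $A_{u_0 p} = \varphi(u)$ and $A_{v_0 p} = \varphi(v)$ back into the displayed expansion then gives exactly
\[
A_{uv} = \lambda(u)\,\varphi(v) + \lambda(v)\,\varphi(u) + \lambda(u)\lambda(v)\, A_{pp}
\]
for all $u, v \in W$, which is the asserted identity.

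There is no genuine obstacle here: the statement is elementary linear algebra, and the only points requiring (minor) care are to verify that the candidate $\varphi$ is well defined and linear, that $\varphi(p) = 0$, and that the final identity holds for \emph{all} $u, v \in W$ rather than merely on $\sH$ or at the chosen vector $p$ — all of which follow immediately from the full bilinear expansion above.
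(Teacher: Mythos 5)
Your proof is correct and follows essentially the same route as the paper: both define $\varphi$ on $\sH$ by $\varphi(u)=A_{up}$ (your $A_{u_0 p}$ agrees with this for $u\in\sH$), extend by $\varphi(p)=0$, and verify the identity from the vanishing of $A$ on $\sH\times\sH$; your direct expansion via $u=u_0+\lambda(u)p$ is just a slightly more streamlined way of carrying out the paper's comparison of $A$ with $A'_{uv}=\lambda(u)\varphi(v)+\lambda(v)\varphi(u)$.
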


\begin{proof}
 For each element  $u \in \sH$, the endomorphism $A_{u}: w \mapsto A_{wu}$ of $W$ annihilates $\sH$. Thus there exists a unique vector $\varphi(u) \in W$ satisfying
$A_{wu} = \lambda(w) \varphi(u)$ for any $w \in W$. This defines $\varphi \in \Hom (\sH, W)$. Extend it to an element  $\varphi \in \End(W)$ by putting $\varphi (p) =0$.
Define $A' \in \Hom(S^2 W,W)$ by $$A'_{uv} = \lambda(v) \varphi(u) + \lambda(u) \varphi(v) \mbox{ for any } u, v \in W.$$
Then $B:=A - A'$ satisfies $B_{uv} = 0$ if $u \in \sH$ or $v \in \sH$.
Thus  $B_{uv} = \lambda(u) \lambda(v) q$ for some $q \in W$. Since
$A'_{pp} =0$, we see that $q = A_{pp}$.
\end{proof}

\begin{proof}[Proof of Theorem \ref{t.cubic2}]
 To prove the theorem, assume that  $Y \neq {\rm Sec}(Z)$ for any irreducible component $Z$ of ${\rm Sing}(Y)$. We will derive a contradiction.

To start with, we derive a number of geometric consequences of this assumption in the following lemma.
 Denote  by $\langle Z \rangle \subset W$ the linear subspace spanned by a variety $Z \subset \BP W$.

  \begin{lemma}\label{L.1}
  There are two distinct irreducible components $S, S'$ of ${\rm Sing}(Y)$ with the following properties.
\begin{itemize}
\item[(1)] $Y = {\rm Join}(S, S') := \cup_{x \in S, y \in S'} \overline{xy}.$
\item[(2)] $ \widehat{S} \neq \langle S \rangle, \ \widehat{S'} \neq \langle S' \rangle \mbox{ and } W = \langle S \rangle + \langle S' \rangle$.
 \item[(3)] $\widehat{S} \cap \langle S' \rangle \neq 0 \neq \widehat{S'} \cap \langle S \rangle$.
     \item[(4)] Let $F \subset Y$ be a linear subspace contained in $Y$ such that
     $F \cap S \neq \emptyset \neq F \cap S'$. Suppose there exits a point $z \in F \cap {\rm Sm}(Y)$ such that $F$ is contained in the fiber through $z$  of the Gauss map $\gamma_Y.$ Then $F$ is a line $\overline{xy}$ with $x \in S, y \in S'$.
\end{itemize} \end{lemma}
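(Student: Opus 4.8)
The plan is to prove Lemma \ref{L.1} as a chain of geometric consequences of the standing assumption that $Y \neq \Sec(Z)$ for every irreducible component $Z$ of $\Sing(Y)$, together with the hypothesis $Y = \Sec(\Sing(Y))$. I will treat the four items roughly in the listed order, since each one feeds the next.

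\medskip
\textbf{Step 1: producing $S$ and $S'$ and proving (1).} Write $\Sing(Y) = Z_1 \cup \cdots \cup Z_k$ for the irreducible components. Since $Y = \Sec(\Sing(Y))$ and $Y$ is irreducible of dimension $\dim \BP W - 1$, the variety $Y$ is the union of the joins $\Join(Z_i, Z_j)$ over $1 \le i \le j \le k$ together with the $\Sec(Z_i)$; as a finite union of irreducible closed sets whose union is the irreducible $Y$, one of them must equal $Y$. By assumption no $\Sec(Z_i)$ equals $Y$, and also $\Join(Z_i,Z_i) = \Sec(Z_i)$, so we must have $Y = \Join(S, S')$ for two \emph{distinct} components $S := Z_i$, $S' := Z_j$ with $i \neq j$. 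This gives (1).

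\medskip
\textbf{Step 2: the linear-span statements (2) and (3).} For (2), the three conditions are: $\widehat S \neq \langle S\rangle$, $\widehat{S'} \neq \langle S'\rangle$, and $W = \langle S\rangle + \langle S'\rangle$. The last is immediate from (1): a general point of $Y$ lies on a line $\overline{xy}$ with $x \in S$, $y \in S'$, so $\widehat Y$ — which spans $W$ since $Y$ is nondegenerate (being a hypersurface, hence not contained in a hyperplane) — is covered by two-planes spanned by $\widehat S$ and $\widehat{S'}$, forcing $\langle S\rangle + \langle S'\rangle = W$. For the first two: if $\widehat S = \langle S\rangle$ then $S$ is a linear subspace $\BP L \subset Y$; but $\Sec(\BP L, S') \subset Y$ and, taking $x$ a general point of $S'$, the cone $\Join(\BP L, x)$ is a linear space inside $Y$ of dimension $\dim \BP L + 1$. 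Running $x$ over $S'$ and using $Y = \Join(S,S')$, one gets that $Y$ is swept out by linear spaces through the fixed $\BP L$; a short dimension count (or the fact that an irreducible cubic hypersurface containing a linear space of codimension one through which it is ruled would be reducible/a cone, contradicting (ND1)) rules this out. So $\widehat S \neq \langle S\rangle$ and symmetrically for $S'$. For (3): since $W = \langle S\rangle + \langle S'\rangle$ we have $\dim\langle S\rangle + \dim\langle S'\rangle \ge \dim W + \dim(\langle S\rangle \cap \langle S'\rangle) \ge \dim W$. If $\widehat S \cap \langle S'\rangle = 0$ then $\langle S\rangle \cap \langle S'\rangle$ misses $\widehat S$, and one argues that a general line joining $S$ to $S'$ together with this direct-sum decomposition forces $Y$ to be a cone or $\Sec(S') = Y$ — again a contradiction; the cleanest route is: a general point $y \in \widehat{S'}$ has $T_y \widehat Y \supset \langle S\rangle$ is impossible dimension-wise unless the secant lines from $S$ degenerate, and chasing this gives $\widehat S \cap \langle S'\rangle \neq 0$, and symmetrically.

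\medskip
\textbf{Step 3: rigidity of linear spaces in a Gauss fiber, proving (4).} Let $F \subset Y$ be a linear space with $F \cap S \neq \emptyset \neq F \cap S'$, contained in a Gauss fiber through some $z \in F \cap \Sm(Y)$. Then $\gamma_Y$ is constant on $F \setminus \Sing(Y)$, so Proposition \ref{p.Gauss}(3) applies: $F \cap \Sing(Y)$ is a hypersurface of degree $\le 2$ in $F$. Since $F$ meets two distinct components $S$ and $S'$ of $\Sing(Y)$, the set $F \cap \Sing(Y)$ contains $F \cap S$ and $F \cap S'$, which are contained in $\langle S\rangle$ and $\langle S'\rangle$ respectively; by (3) and the fact that these are proper (as $F \not\subset \Sing(Y)$ because $z \in \Sm(Y)$), $F \cap \Sing(Y)$ is a degree-$\le 2$ hypersurface in $F$ splitting off pieces in $\langle S \rangle$ and in $\langle S'\rangle$. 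The only way a quadric (or hyperplane) hypersurface in a linear space $F$ can be a union of two nonempty pieces lying in two distinct linear subspaces of $F$ is if it is the union of two hyperplanes of $F$, i.e.\ $\deg = 2$ and $F \cap \Sing(Y) = (F \cap \langle S\rangle) \cup (F \cap \langle S'\rangle)$ with each a hyperplane of $F$; hence $\dim F = \dim(F \cap \langle S\rangle) + 1 = \dim(F\cap\langle S'\rangle)+1$. Intersecting the two hyperplanes, $F \cap \langle S\rangle \cap \langle S'\rangle$ has codimension $2$ in $F$, and a point of $F \setminus \Sing(Y)$ spans $F$ together with these two hyperplanes only if $\dim F \le 1$. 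Since $F \not\subset \Sing(Y)$ forces $\dim F \ge 1$, we get $\dim F = 1$: $F$ is a line meeting $S$ in a point $x$ and $S'$ in a point $y$, i.e.\ $F = \overline{xy}$ with $x \in S$, $y \in S'$. This proves (4).

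\medskip
\textbf{Expected main obstacle.} The genuinely delicate points are the exclusions in Steps 2 and 3 — ruling out $\widehat S = \langle S\rangle$, ruling out $\widehat S \cap \langle S'\rangle = 0$, and pinning down the quadric $F \cap \Sing(Y)$ — because each threatens to collapse into a case where $Y$ is a cone (violating nonzero Hessian / (ND1)) or where $Y = \Sec$ of a single component (violating the standing assumption), and making these collapses precise requires carefully combining the ruling structure $Y = \Join(S,S')$ with irreducibility of the cubic and with Proposition \ref{p.Gauss}(3). I expect the cleanest uniform tool to be: \emph{a general tangent hyperplane section $T_z\widehat Y$ cannot contain a whole component's span unless the secant family degenerates}, used repeatedly; the dimension bookkeeping there is the part that needs the most care.
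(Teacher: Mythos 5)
Your Step 1 and the first half of Step 2 are essentially the paper's argument: item (1) comes from writing $\Sec({\rm Sing}(Y))$ as a finite union of joins of pairs of components and invoking irreducibility of $Y$, and $\widehat{S}\neq\langle S\rangle$ holds because the join of a linear space with anything is a cone, contradicting nonzero Hessian; $W=\langle S\rangle+\langle S'\rangle$ then follows from nondegeneracy of the hypersurface $Y$. The gaps are in (3) and (4). For (3) you give no argument at all: ``one argues that\dots'' and ``chasing this gives\dots'' are placeholders, and note that the claim concerns $\widehat{S}$ itself meeting $\langle S'\rangle$, not merely the spans intersecting. The paper's proof is short and concrete: since $S$ is not linear by (2), there is $z\in\Sec(S)\setminus S\subset Y$; by (1) write $z\in\overline{xy}$ with $x\in S$, $y\in S'$; if $\widehat{y}\not\subset\langle S\rangle$ then $\overline{xy}\cap\BP\langle S\rangle=\{x\}$, forcing $z=x\in S$, a contradiction; hence $0\neq\widehat{y}\subset\widehat{S'}\cap\langle S\rangle$, and symmetrically. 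Some such argument must be supplied.

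For (4), the appeal to Proposition \ref{p.Gauss} (3) is correct, but the deduction of $\dim F=1$ does not go through as written. The claim that a degree-$\le 2$ hypersurface of $F$ ``splitting off pieces in $\langle S\rangle$ and $\langle S'\rangle$'' must be a union of two hyperplanes is unjustified: by (3) the spans $\langle S\rangle$ and $\langle S'\rangle$ are far from disjoint, and $F\cap\langle S\rangle$, $F\cap\langle S'\rangle$ need not be proper or distinct in $F$. Moreover your closing step --- that a smooth point of $F$ ``spans $F$ together with these two hyperplanes only if $\dim F\le 1$'' --- proves nothing, since two distinct hyperplanes of $F$ already span $F$ whenever $\dim F\ge 1$. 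The missing ingredient is assumption (a) of Theorem \ref{t.cubic2}: ${\rm Sing}(Y)$ is nonsingular, so its distinct components $S$ and $S'$ are disjoint. Then $F\cap S$ and $F\cap S'$ are two disjoint nonempty closed subsets of the degree-$\le 2$ hypersurface $F\cap{\rm Sing}(Y)$; each irreducible component of that hypersurface lies in a single component of ${\rm Sing}(Y)$, so $F\cap S$ and $F\cap S'$ are each unions of such components, hence themselves hypersurfaces of $F$. Two disjoint hypersurfaces in a projective space of dimension $\ge 2$ cannot exist, so $\dim F=1$ and $F=\overline{xy}$. Without using the disjointness of $S$ and $S'$ the conclusion is genuinely out of reach (a plane whose singular locus is two concurrent lines is not excluded by your argument), so this is not a cosmetic omission.
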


\begin{proof}
By the requirement that $Y$ is not the secant variety of a single irreducible component of ${\rm Sing}(Y)$,  there are two distinct irreducible components $S$ and $S'$ of ${\rm Sing}(Y)$
such that $Y = {\rm Sec}(S \cup S').$ The irreducibility of $Y$ implies that $Y$ is equal to ${\rm Join}(S, S')$, proving (1).

Since $Y$ is not a cone, (1) implies (2).

Since $\widehat{S} \neq \langle S \rangle$, there is a point $$z \in {\rm Sec}(S)
\setminus S  \ \subset \ Y.$$ Since $z \in {\rm Join}(S, S')$ by (1), we have $x \in S, y \in S'$ such that $z \in \overline{xy}$.
If $\widehat{y} \not\subset \langle S \rangle$, then $z \in \overline{xy} \cap \BP \langle S \rangle = \{x\},$ which is absurd.
Thus $0 \neq \widehat{y} \subset \langle S \rangle \cap \widehat{S}'$. Similarly, we have  $\widehat{S} \cap \langle S' \rangle  \neq 0,$ proving (3).

To prove (4),
 note that the intersection $F \cap {\rm Sing}(Y)$ is a hypersurface in $F$ by Proposition \ref{p.Gauss} (3).  So the intersections $F \cap S$ and $F\cap S'$ are two disjoint hypersurfaces in the linear space $F$. This implies that $\dim F =1$ and $F= \overline{xy}$
 for some $x \in S$ and $y\in S'$.
 \end{proof}

Lemma \ref{L.1} has the following implication for $A$.

\begin{lemma}\label{L.2}
\begin{itemize} \item[(1)] $A_{uv} = 0$ for $u \in \langle S\rangle$ and $v \in \langle S' \rangle.$ \item[(2)] $a \chi^A \neq 0.$
\end{itemize} \end{lemma}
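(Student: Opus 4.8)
The plan is to exploit the geometric structure extracted in Lemma \ref{L.1} together with the $\aut(\widehat{Y})^{(1)}$-properties of $A$ already established, namely Proposition \ref{p.Gamma} (the identity $f(A_{ww},w,u) = \tfrac{\chi^A(w)}{2}f(w,w,u)$), Proposition \ref{p.A_xx} ($A_{ww}\notin\widehat{w}$ for general $w$), and Proposition \ref{p.varphi} (that each $A_w$ and each $\varphi\in\aut(\widehat Y)$ stabilizing $w$ preserves $\Gamma_w$ and $T_w\widehat Y$). The key point for (1) is that by Lemma \ref{L.1}(4), the linear fibers of the Gauss map meeting both $S$ and $S'$ are precisely the secant lines $\overline{xy}$ with $x\in S$, $y\in S'$. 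So fix general $x\in S$, $y\in S'$; then $\widehat x\subset\widehat S$, $\widehat y\subset\widehat S'$, and the line $L=\overline{xy}$ lies in $Y$ with $\widehat L\subset\Gamma_w$ for a general $w\in\widehat L$. Now $A_{ww}\in\Gamma_w$ and, arguing as in the proof of Proposition \ref{p.sigma}, the discriminant computation shows $A_{ww}-\tfrac{\chi^A(w)}{2}w$ lies in $\Gamma_w\cap{\rm Sing}(\widehat Y)$; but $\Gamma_w\cap{\rm Sing}(\widehat Y)$ here consists of the two singular points $\widehat x,\widehat y$ of the line (since $L\cap{\rm Sing}(Y)$ is a degree-$\le 2$ hypersurface in $L$ by Proposition \ref{p.Gauss}(3), hence exactly $\{x,y\}$ for general $x,y$). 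So $A_{ww}$ is forced into $\widehat x+\widehat y$, and since this holds along $L$ we can polarize to control $A_{uv}$ for $u\in\widehat x$, $v\in\widehat y$; letting $x$ range over a dense subset of $S$ and $y$ over $S'$ and using bilinearity gives $A_{uv}$ for $u\in\langle S\rangle$, $v\in\langle S'\rangle$. A direct bookkeeping of where these values land — using that $A_{wu}$ for $u\in\widehat x$ is tangent to ${\rm Sing}(\widehat Y)$ by Proposition \ref{p.varphi}, hence to $\widehat S$, and symmetrically tangent to $\widehat{S'}$, while $\widehat S\cap\langle S'\rangle$ and $\widehat{S'}\cap\langle S\rangle$ are small — should pin the value to $0$; the only alternative would contradict Proposition \ref{p.A_xx} or force an extra quadric in $Y$. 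I would phrase this carefully so that the ``general $x,y$'' hypotheses are legitimate (the Gauss fibers through general points are the $\overline{xy}$, by \ref{L.1}(1),(4) and \ref{p.Gauss}(2)).

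For part (2), I would argue by contradiction: suppose $a\chi^A = 0$. If $a=0$, then $A_{uv} = h(f_{uv})$, so $A$ factors through the polar pairing; in particular $A_{uv}=0$ whenever $f_{uv}=0$, i.e.\ whenever $v\in{\rm Null}(f_u)$, and in particular $A_{ww}\in\C\cdot h(f_{ww})$ lies in a fixed line direction-wise only after applying $h$ — more usefully, $A_{ww}=h(f_{ww})$, and Proposition \ref{p.Gamma} gives $f(h(f_{ww}),w,u)=\tfrac{\chi^A(w)}{2}f(w,w,u)$. If moreover $\chi^A=0$ this says $h(f_{ww})\in{\rm Null}(f_w)$ for all $w$; but $Y$ has nonzero Hessian so ${\rm Null}(f_w)=0$ generically, forcing $A_{ww}=0$ for general $w$ hence $A\equiv 0$, contradicting $A\ne 0$. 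If $a\ne 0$ but $\chi^A=0$, the same Proposition \ref{p.injective} (injectivity of $A\mapsto\chi^A$) already gives $A=0$, a contradiction. So in all cases $a\chi^A\ne 0$. This part is essentially a reshuffling of Proposition \ref{p.injective} together with the nonzero-Hessian hypothesis, so I expect it to be short.

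The main obstacle is part (1), and specifically making the transition from ``$A_{ww}\in\widehat x+\widehat y$ for $w$ general on the secant line $\overline{xy}$'' to the clean statement ``$A_{uv}=0$ for all $u\in\langle S\rangle$, $v\in\langle S'\rangle$''. The delicate points are: (i) justifying that the generic Gauss fiber is exactly a secant line $\overline{xy}$ with $x\in S,y\in S'$ and that $S,S'$ move in their respective families as this line varies (so that $\langle S\rangle,\langle S'\rangle$ are genuinely swept out); (ii) ruling out that $A_{uv}$ lands in a nonzero combination $\widehat x$ or $\widehat y$ rather than $0$ — here I expect to invoke that $A_u$ is tangent to ${\rm Sing}(\widehat Y)$, combined with $\widehat S\cap\langle S'\rangle$ and $\widehat{S'}\cap\langle S\rangle$ being proper (from \ref{L.1}(3), these are nonzero but should not be everything), plus the irreducibility of the cubic forbidding $Y$ to satisfy an auxiliary quadratic relation (as in Proposition \ref{p.A_xx}); and (iii) keeping track of the quadratic form $q$ on the fiber (as in the proof of Theorem \ref{t.cubic1}) so that the cross terms are genuinely controlled. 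I would organize (1) by first establishing it on the pair $(\widehat x,\widehat y)$ for generic $x,y$ via the discriminant/polarization argument, then bootstrapping to all of $(\langle S\rangle,\langle S'\rangle)$ by Zariski density and bilinearity, handling the ``lands in $0$ vs.\ a coordinate line'' dichotomy with the tangency and irreducibility inputs.
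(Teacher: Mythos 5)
Your skeleton for part (1) is the right one---restrict to a general secant line $\overline{xy}$, which by Lemma \ref{L.1}(4) and Proposition \ref{p.Gauss}(2) is the Gauss fiber through a general point $w$ on it, get $A_{ww}\in\widehat{x}+\widehat{y}$, polarize, and use that each $A_t$ preserves $S$ and $S'$---but two steps need repair. First, the detour through the discriminant computation of Proposition \ref{p.sigma} is invalid here: that computation needs the line $\{tw+A_{ww}\}$ to meet ${\rm Sing}(\widehat Y)$ in exactly one point (so that the discriminant vanishes), which was forced by the hypothesis ${\rm Sec}({\rm Sing}(Y))\neq Y$; in the present situation the fiber line meets ${\rm Sing}(Y)$ in the two points $x,y$, the quadratic in $t$ has two distinct roots, and the conclusion ``$A_{ww}-\frac{\chi^A(w)}{2}w\in\Gamma_w\cap{\rm Sing}(\widehat Y)$'' fails. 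Fortunately all you need is $A_{ww}\in\Gamma_w=\widehat x+\widehat y$, which is exactly Proposition \ref{p.Gamma}. Second---and this is the real gap---your ``bookkeeping'' to pin $A_{uv}$ to $0$ is not the right mechanism. The step that closes the argument is a genericity choice: for general $u\in\widehat S$, $v\in\widehat{S'}$ one arranges $T_u\widehat S\cap\langle u,v\rangle=\widehat u$ and $T_v\widehat{S'}\cap\langle u,v\rangle=\widehat v$ (possible because $\widehat S\neq\langle S\rangle$ and $\widehat{S'}\neq\langle S'\rangle$ by Lemma \ref{L.1}(2)); then $A_{uv}\in T_u\widehat S\cap T_v\widehat{S'}\cap\langle u,v\rangle\subset\widehat u\cap\widehat v=0$. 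The ingredients you propose instead ($\widehat S\cap\langle S'\rangle$ being ``small,'' a contradiction with Proposition \ref{p.A_xx}, an ``extra quadric'') do not close this step: Lemma \ref{L.1}(3) asserts those intersections are \emph{nonzero}, and nothing in your sketch excludes $A_{uv}$ being a nonzero vector of $\widehat u+\widehat v$.

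Part (2) has a genuine logical hole: your case analysis covers $(a=0,\ \chi^A=0)$ and $(a\neq0,\ \chi^A=0)$ but not $(a=0,\ \chi^A\neq0)$, which is also an instance of $a\chi^A=0$; there the injectivity of $A\mapsto\chi^A$ gives nothing and your Null-space argument is not even run. (Moreover, in the subcase you do treat, Proposition \ref{p.Gamma} only yields the identity for $w\in\widehat Y$, and ${\rm Null}(f_w)$ is typically \emph{nonzero} for general $w\in\widehat Y$ when $\aut(\widehat Y)^{(1)}\neq0$---it spans $\Gamma_w$ together with $\widehat w$, and $\dim\Gamma_w\geq2$---so ``nonzero Hessian'' cannot be invoked along $\widehat Y$; that subcase happens to be saved by Proposition \ref{p.injective} anyway.) The paper's proof of (2) is not a reshuffling of Proposition \ref{p.injective}: it uses part (1) essentially. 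If $a\chi^A=0$, then $A_{uu}=2a\chi^A(u)u+h(f_{uu})=0$ for every $u\in{\rm Sing}(\widehat Y)$ since $f_{uu}=0$ there; writing a general $w\in\widehat Y$ as $u+v$ with $u\in\widehat S$, $v\in\widehat{S'}$ via $Y={\rm Join}(S,S')$, part (1) kills the cross term, so $A_{ww}=0$ on $\widehat Y$ and hence identically because $Y$ is an irreducible cubic, contradicting $A\neq0$. You should restructure (2) along these lines rather than trying to make it independent of (1).
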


\begin{proof}
By  Lemma \ref{L.1} (2),  we can pick $u \in \widehat{S}$ and $v \in \widehat{S}'$ generally such that
   \begin{equation}\label{e.SS} T_u\widehat{S} \cap \langle u, v \rangle = \widehat{u}  \ \mbox{ and } \
T_v \widehat{S'} \cap \langle u, v \rangle = \widehat{v}. \end{equation}
By Proposition \ref{p.Gamma}, Proposition \ref{p.Gauss} (2) and Lemma \ref{L.1} (4), we have  $A_{ww} \in \langle u, v \rangle$ for any general $w \in \langle u, v \rangle$.   Thus  $$A_{st} \in \langle u, v \rangle \mbox{ for any } s, t \in \langle u, v \rangle. $$
In particular, we have $A_{uv} \in \langle u, v \rangle$.
Since $S$ and $S'$ are preserved by $\aut(\widehat{Y})$,  we have $A_{tu} \in T_u \widehat{S} $ and $A_{tv}
\in T_v \widehat{S'}$ for any $t \in W$.  Thus $$A_{uv} \in T_u \widehat{S} \cap T_v \widehat{S'} \cap \langle u,v \rangle.$$
Then (\ref{e.SS}) implies
 $A_{uv} \in \widehat{u} \cap \widehat{v} =0.$
Since this is true for general $u \in \widehat{S}$ and $v \in \widehat{S}'$, we have $A_{uv} = 0$ for all $u \in \langle S \rangle$ and $v \in \langle S' \rangle.$ This proves (1).

If $a \chi^A = 0$, then for $u, v \in {\rm Sing}(\widehat{Y})$, we have
$A_{uu} = 0 = A_{vv}$ because $f_{uu} = f_{vv} = 0$. Then for a general $w \in \widehat{Y}$ with $w = u + v, u \in \widehat{S}, v \in \widehat{S}',$ (1) implies that $$A_{ww}
 = A_{u+v, u+v} = A_{uu} + 2 A_{uv} + A_{vv} = 0.$$ Thus $A_{ww} = 0$ for all $ w \in \widehat{Y}$. Since $Y$ is an irreducible cubic hypersurface, this implies $A_{ww} = 0$ for all $w \in W$, contradicting $A\neq 0$. This proves (2).\end{proof}

\begin{lemma}\label{L.3}
 Let $R = \langle S \rangle \ \cap \langle S' \rangle.$
 Then \begin{itemize} \item[(1)] $R \subset \widehat{Y}$; \item[(2)] $R \not\subset {\rm Sing}(\widehat{Y})$; and  \item[(3)] $h^A(f_{rr}) = 0$ for any $r \in R$ and ${\rm Ker}(h^A) \neq 0$.
    \end{itemize} \end{lemma}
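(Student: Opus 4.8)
\emph{Outline.} The idea is to first determine how $A$ behaves on $R$, then derive (1) and the first half of (3) directly from the prolongation identity of Lemma \ref{l.Aut}(2), and finally prove (2) by a short geometric argument using hypothesis (a); the second half of (3) is then immediate.

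\emph{Step 1 (the endomorphisms $A_r$ vanish on $R$).} I claim $A_r = 0$ for every $r \in R$. Given $w \in W$, use Lemma \ref{L.1}(2) to write $w = w_1 + w_2$ with $w_1 \in \langle S \rangle$ and $w_2 \in \langle S' \rangle$; then $A_{rw} = A_{w_1 r} + A_{r w_2}$ and both terms vanish by Lemma \ref{L.2}(1) (in the first term $w_1 \in \langle S \rangle$ and $r \in \langle S' \rangle$; in the second $r \in \langle S \rangle$ and $w_2 \in \langle S' \rangle$). Substituting the prolongation direction $r$ into Lemma \ref{l.Aut}(2) now gives $\chi^A(r)\, f(u,v,s) = 0$ for all $u, v, s$, hence $\chi^A(r) = 0$; i.e. $\chi^A$ annihilates $R$.

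\emph{Step 2 (proof of (1) and of $h^A(f_{rr}) = 0$).} Fix $r \in R$. Applying Lemma \ref{l.Aut}(2) with prolongation direction $w$ and all three arguments of $f$ equal to $r$, the symmetry of $f$ gives $3\, f(A_{wr}, r, r) = \chi^A(w)\, f(r,r,r)$, and the left-hand side is $0$ by Step 1. Since $\chi^A \neq 0$ (Lemma \ref{L.2}(2)), picking $w$ with $\chi^A(w) \neq 0$ forces $f(r,r,r) = 0$; thus $R \subset \widehat{Y}$, which is (1). For (3), insert $u = v = r$ into the prescribed form of $A$ (hypothesis (b) of the theorem): $A_{rr} = 2a\,\chi^A(r)\, r + h^A(f_{rr})$. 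Since $\chi^A(r) = 0$ and $A_{rr} = A_r(r) = 0$, we conclude $h^A(f_{rr}) = 0$.

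\emph{Step 3 (proof of (2), and ${\rm Ker}(h^A) \neq 0$).} Assume for contradiction that $R \subset {\rm Sing}(\widehat{Y})$; by Lemma \ref{l.Sing} this means the linear subspace $\BP R$ is contained in ${\rm Sing}(Y)$. As ${\rm Sing}(Y)$ is nonsingular by hypothesis (a), its irreducible components are pairwise disjoint, so the connected set $\BP R$ lies in a single component. But $\widehat S \cap \langle S' \rangle$ and $\widehat{S'} \cap \langle S \rangle$ are contained in $\langle S \rangle \cap \langle S' \rangle = R$ and are nonzero by Lemma \ref{L.1}(3), so $\BP R$ meets both $S$ and $S'$; since $S \neq S'$ are distinct components of ${\rm Sing}(Y)$, this is a contradiction. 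Hence there exists $r \in R$ with $f_{rr} \neq 0$, which is (2); and then $f_{rr}$ is a nonzero element of ${\rm Ker}(h^A)$ by Step 2, so ${\rm Ker}(h^A) \neq 0$.

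\emph{Main obstacle.} The computations are all short; the one delicate point is Step 3, where it is essential that hypothesis (a) concerns the reduced scheme ${\rm Sing}(Y)$ (so that a connected linear subspace of it cannot straddle two components), together with the geometric input of Lemma \ref{L.1}(3) guaranteeing that $R$ actually meets both $S$ and $S'$. Everything else follows from the single observation that $A_r$ vanishes identically on $R$, and it is worth noting that the proof of (1) does not even use the value of $a$.
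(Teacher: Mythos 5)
Your proof is correct and takes essentially the same route as the paper's: the key computation $A_{rw}=0$ (from Lemma \ref{L.2}(1) together with $W=\langle S\rangle+\langle S'\rangle$) fed into Lemma \ref{l.Aut}(2) yields (1), $\chi^A(r)=0$ and $h^A(f_{rr})=0$, while (2) follows from the disjointness of the components of the nonsingular ${\rm Sing}(Y)$ combined with Lemma \ref{L.1}(3). The only difference is organizational—you isolate the observation $A_r=0$ up front, which the paper carries out inline.
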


\begin{proof}
For $r \in R$ and a general $w \in W,$ Lemma \ref{l.Aut} (2)  gives
 $$ f(A_{wr}, r, r) + f(r, A_{wr}, r) + f(r, r, A_{wr}) = \chi^A(w) \ f(r,r,r).$$
 Since the left hand side vanishes by Lemma \ref{L.2} (1), we obtain $r \in \widehat{Y}$.
 Thus $R \subset \widehat{Y}$, proving (1).

 If $R \subset {\rm Sing}(\widehat{Y})$,
 let $S^{''}$ be the irreducible component of ${\rm Sing}(Y)$ containing $\BP R$.
 By Lemma \ref{L.1} (3), we have
 $$ \widehat{S} \cap R = \widehat{S} \cap \langle S \rangle \cap \langle S' \rangle =
 \widehat{S} \cap \langle S' \rangle \neq 0.$$ Thus
 $S \cap S^{''} \neq \emptyset$ and similarly,   $S' \cap S^{''} \neq \emptyset.$
This contradicts the assumption that ${\rm Sing}(Y)$ is nonsingular. This proves (2).

For any $r \in R$ and a general $w \in W,$ Lemma \ref{l.Aut} (2)  gives
 $$ f(A_{rw}, w, w) + f(w, A_{rw}, w) + f(w, w, A_{rw}) = \chi^A(r) \ f(w,w,w).$$
 Since the left hand side vanishes by Lemma \ref{L.2} (1), we have $\chi^A(r) = 0.$
 Apply Lemma \ref{L.2} (1) to have $$0 = A_{rr} = 2 a \chi^A(r) r + h^A (f_{rr})  = h^A(f_{rr}).$$ Thus $f_{rr} \in {\rm Ker}(h^A).$ By (2), there exists an element $r \in R \cap {\rm Sm}(\widehat{Y})$, i.e. $f_{rr} \neq 0$. Thus ${\rm Ker}(h^A) \neq 0$.
\end{proof}

\begin{lemma}\label{L.4} Either $\chi^A(\widehat{S}) = 0$ or $\chi^A(\widehat{S}') = 0$.  \end{lemma}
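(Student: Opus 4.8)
The plan is to show that $\chi^A$ cannot be nonzero on both $\widehat{S}$ and $\widehat{S'}$ simultaneously, by exploiting Lemma \ref{L.2} (1), which tells us $A_{uv}=0$ whenever $u \in \langle S\rangle$ and $v \in \langle S'\rangle$. The key structural tool will be Lemma \ref{l.Aphi}: since by Lemma \ref{L.3} (2) we may choose a point $r \in R = \langle S\rangle \cap \langle S'\rangle$ lying in ${\rm Sm}(\widehat{Y})$, and since $\langle S\rangle$ and $\langle S'\rangle$ together span $W$ (Lemma \ref{L.1} (2)), I expect the vanishing $A_{uv}=0$ for $u\in\langle S\rangle, v\in\langle S'\rangle$ to force a very rigid form on $A$. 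The hard part is to set up the right hyperplane. First I would argue that the assumption $\chi^A(\widehat{S}) \neq 0 \neq \chi^A(\widehat{S'})$ is the thing to contradict, so assume it. Then $\chi^A$ is nonzero on both $\langle S\rangle$ and $\langle S'\rangle$; let $\sH^A = \{\chi^A = 0\}$ be the corresponding hyperplane, so neither $\langle S\rangle$ nor $\langle S'\rangle$ is contained in $\sH^A$.

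Next, I would try to combine two facts. On the one hand, $A \in \Xi^a_Y$ means $A_{uv} = a\chi^A(u)v + a\chi^A(v)u + h^A(f_{uv})$. On the other hand, from Lemma \ref{L.3} (3) we know $h^A(f_{rr})=0$ for $r\in R$ and ${\rm Ker}(h^A)\neq 0$. The strategy is to pick $u\in\widehat{S}$, $v\in\widehat{S'}$ generic and combine $A_{uv}=0$ (Lemma \ref{L.2} (1)) with the $\Xi^a_Y$-formula to get $a\chi^A(u)v + a\chi^A(v)u = -h^A(f_{uv})$. Now vary $u$ over $\widehat{S}$ and $v$ over $\widehat{S'}$: the left-hand side, by Lemma \ref{l.UV} applied with $\sH = \sH^A$, $U = \langle S\rangle$, $V=\langle S'\rangle$, spans a subspace of codimension at most $1$ in $W$ — provided $a\neq 0$, which holds by Lemma \ref{L.2} (2). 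Hence the right-hand side, which lies in ${\rm Im}(h^A)$, also has codimension at most $1$. But ${\rm Ker}(h^A)\neq 0$, so ${\rm Im}(h^A)$ has codimension at least $1$; thus ${\rm Im}(h^A)$ has codimension exactly $1$ and $\dim{\rm Ker}(h^A)=1$. The subtlety is that $f_{uv}$ for $u\in\widehat{S}, v\in\widehat{S'}$ need not range over all of $W^*$; one must check that these functionals, together, hit enough of $W^*$, or more precisely that $h^A$ applied to them spans the codimension-$\leq 1$ image — this should follow from the span statement for the left-hand side since the two sides are equal.

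Then I would push for the contradiction. Having pinned down that $\dim{\rm Ker}(h^A)=1$, I would use Lemma \ref{l.Aphi} with $\sH = \sH^A$ to write, after choosing $p\in W$ with $\chi^A(p)=1$, the relation $A_{uv} = \chi^A(u)\varphi(v) + \chi^A(v)\varphi(u) + \chi^A(u)\chi^A(v)A_{pp}$ for some $\varphi\in\End(W)$ with $\varphi(p)=0$ — this uses that $A_{uv}=0$ when $u,v\in\sH^A$, which needs to be verified. Actually, to apply Lemma \ref{l.Aphi} I first need $A_{uv}=0$ for $u,v\in\sH^A$: since $\langle S\rangle \cap \sH^A$ and $\langle S'\rangle \cap \sH^A$ are hyperplanes in $\langle S\rangle$ and $\langle S'\rangle$ respectively, and $A$ vanishes on cross terms, a separate argument using $\Xi^a_Y$-structure (the term $a\chi^A(u)v + a\chi^A(v)u$ vanishes on $\sH^A$) plus $h^A(f_{uv})$ landing in the $1$-dimensional cokernel-complement should yield $A_{uv}=0$ on $\sH^A$ after intersecting with the geometric constraints from Lemma \ref{L.1} (4). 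This verification that $A$ vanishes on all of $S^2\sH^A$ is what I expect to be the main obstacle — the preceding spanning argument gives it on cross terms but not automatically on $S^2(\langle S\rangle\cap\sH^A)$ or $S^2(\langle S'\rangle\cap\sH^A)$. Once $A$ vanishes on $S^2\sH^A$, the rigid form from Lemma \ref{l.Aphi} combined with the known form $A_{uu} = 2a\chi^A(u)u + h^A(f_{uu})$ forces $\varphi$ and $h^A$ to be essentially multiples of $\chi^A$-related maps, and evaluating at a generic $r\in R\cap{\rm Sm}(\widehat{Y})$ — where $\chi^A(r)=0$ but $f_{rr}\neq 0$, yet $h^A(f_{rr})=0$ — should clash with the dominance of $\Phi$ (the polar map) or with irreducibility of the cubic, giving the desired contradiction. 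So the conclusion is that at least one of $\chi^A(\widehat{S})$, $\chi^A(\widehat{S'})$ must vanish.
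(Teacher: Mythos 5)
Your first half is exactly the paper's argument: assume $\chi^A(\widehat{S})\not\equiv 0\not\equiv\chi^A(\widehat{S}')$, combine $A_{uv}=0$ for $u\in\langle S\rangle$, $v\in\langle S'\rangle$ with the $\Xi^a_Y$-formula to get $h^A(f_{uv})=-a\chi^A(u)v-a\chi^A(v)u$, apply Lemma \ref{l.UV} with $\lambda=a\chi^A\neq 0$ to see that ${\rm Im}(h^A)$ has codimension at most $1$, and conclude from ${\rm Ker}(h^A)\neq 0$ (Lemma \ref{L.3}(3)) that $\dim{\rm Ker}(h^A)=1$. Up to there you are on track. But from that point on your argument has a genuine gap, which you yourself flag: to invoke Lemma \ref{l.Aphi} you need $A_{uv}=0$ for \emph{all} $u,v\in\sH^A$, and nothing you have establishes this. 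Lemma \ref{L.2}(1) only kills the cross terms $\langle S\rangle\times\langle S'\rangle$; for $u,v$ both in $\langle S\rangle\cap\sH^A$ you only get $A_{uv}=h^A(f_{uv})$, which lies in the hyperplane ${\rm Im}(h^A)$ --- lying in a hyperplane is nowhere near vanishing, and there is no reason for it to vanish. The subsequent ``clash with dominance of $\Phi$ or irreducibility'' is not an argument, just a hope; as written, no contradiction has been derived.

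The paper exploits the $1$-dimensionality of ${\rm Ker}(h^A)$ in a completely different, geometric way. Since $f_{rr}\in{\rm Ker}(h^A)$ for every $r\in R=\langle S\rangle\cap\langle S'\rangle$ (Lemma \ref{L.3}(3)) and ${\rm Ker}(h^A)$ is a line in $W^*$, the Gauss map contracts $\BP R$ to the single point $\BP\,{\rm Ker}(h^A)$. Lemma \ref{L.1}(4) (together with \ref{L.3}(1),(2) and \ref{L.1}(3), which guarantee $\BP R\subset Y$ meets both $S$ and $S'$ and contains a smooth point of $Y$) then forces $\BP R$ to be a line $\overline{xy}$ with $x\in S$, $y\in S'$, whence $S'\cap\BP\langle S\rangle$ is a single point $s'$. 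From $Y={\rm Join}(S,S')$ one gets ${\rm Sec}(S)\subset{\rm Join}(s',S)$, so $S$ is a non-linear hypersurface in $\BP\langle S\rangle$; but then the quadric $f_w|_{\langle S\rangle}=0$ (for general $w$) vanishes on $S$ and hence only on $S$, while $s'\notin S$ also satisfies it --- contradiction. If you want to complete your write-up, you should replace your Lemma \ref{l.Aphi} endgame with an argument of this kind (Lemma \ref{l.Aphi} is used in the paper only later, in Case 1 of the proof of Theorem \ref{t.cubic2}, where the hypothesis $A|_{\sH}=0$ is an explicit case assumption rather than something to be proved).
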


    \begin{proof}
    Assume that $\chi^A(\widehat{S}) \not\equiv 0$ and $\chi^A(\widehat{S}') \not\equiv 0$.
For any $u \in \langle S \rangle$ and $v \in \langle S' \rangle$, Lemma \ref{L.2}(1) gives $$0 = A_{uv} = a \chi^A(u) v + a\chi^A(v) u + h^A (f_{uv}).$$ Thus $h^A(f_{uv}) = - a \chi^A (u) v- a \chi^A(v) u$. By Lemma \ref{l.UV} with $U = \langle S \rangle, V = \langle S' \rangle$ and
$\lambda = a \chi^A \neq 0$ (from Lemma \ref{L.2} (2)), we see that  the values of $h^A(f_{uv})$ span a subspace of codimension at most 1 in $W$. Since  $h^A$ is not surjective by
Lemma \ref{L.3} (3), we see that ${\rm Im}(h^A)$ is a hyperplane in $W$. Thus ${\rm Ker}(h^A) \subset W^*$ is 1-dimensional. Then Lemma \ref{L.3} (3) implies
that  the  Gauss map $\gamma_{Y}$ sends $\BP R$ to the point $\BP {\rm Ker}(h^A)$. Thus by Lemma \ref{L.1} (4),  we conclude that $\BP R$ is a line in $\BP W$ intersecting $S'$ at one point $s' \in S'$, which implies
$$S' \cap \BP \langle S \rangle = \{s' \}.$$
 Then from Lemma \ref{L.1} (1),  $${\rm Sec}(S) \subset Y \cap \BP \langle S \rangle = {\rm Join}(S,S') \cap \BP \langle S \rangle =  {\rm Join}(s', S).$$
  This shows that $S$ must be a hypersurface in $ \BP \langle S \rangle$, which is not a hyperplane because of Lemma \ref{L.1} (2).  Fix a general $w \in W$ such that
 $f_w \in \Sym^2 W^*$   induces a nonzero quadratic equation $q:=f_w|_{\langle S \rangle}.$ Since the hypersurface $S \subset \BP \langle S \rangle$ satisfies this quadratic equation $q=0$, no point of $\BP \langle S \rangle$ outside $S$ satisfies this equation.    But  $s' \in {\rm Sing}(Y)$ satisfies the equation, too. A contradiction to $s' \not\in S$. \end{proof}

Now to finish the proof of Theorem \ref{t.cubic2},
 we will assume that $\chi^A (\widehat{S}') =0$ from
 Lemma \ref{L.4}.
 Let $\sH \subset W$ be the hyperplane defined by $\chi^A = 0$ such that $\widehat{S}' \subset \sH$.  We will consider
two cases separately: when $A|_{\sH} = 0$ (meaning $A_{uv} = 0$ for any $u, v \in \sH$) and when $A|_{\sH} \neq 0.$

{\bf  (Case 1)} When $A|_{\sH} = 0$. Applying Lemma \ref{l.Aphi} with $\lambda= a \chi^A$, we have $\varphi \in \End(W)$ and $p \in W \setminus \sH$ such that  \begin{equation}\label{e.star} A_{uv} = \lambda(u) \ \varphi(v) + \lambda(v) \ \varphi(u) + \lambda (u) \lambda(v) A_{pp} \mbox{ for } u, v \in W.\end{equation}
Choosing  $u \in \widehat{S} \setminus \sH$ and $v \in \widehat{S}' \subset \sH$, Lemma \ref{L.2} gives
$$0 = A_{uv} = \lambda(u) \varphi(v).$$
It follows that $\langle S' \rangle \subset {\rm Ker}(\varphi).$
By Lemma \ref{L.1} (3), we can choose an arc $\{ u_t \in \widehat{S}, t \in \Delta\}$ such that $0 \neq u_0 \in \langle S' \rangle$ and $\lambda(u_t) \neq 0$ for $t \neq 0$. Since $f_{u_t u_t} =0$ from $u_t \in {\rm Sing}(\widehat{Y})$, we have
\begin{eqnarray*} A_{u_t u_t} & = & 2a \chi^A(u_t) u_t + h^A(f_{u_t u_t}) \\
& = & 2 \lambda(u_t) u_t. \end{eqnarray*} On the other hand, the equation (\ref{e.star}) gives
$$A_{u_t u_t} = 2 \lambda(u_t) \ \varphi(u_t) + \lambda(u_t)^2 \ A_{pp}.$$
From the two equations, we obtain
 $u_t = \varphi( u_t) + \frac{1}{2} \lambda(u_t) A_{pp}$. But at $t=0$, this gives $u_0 =0$ from
$u_0 \in \langle S' \rangle \subset {\rm Ker} (\varphi) \cap \sH$. This contradicts our choice
$u_0 \neq 0$.

{\bf (Case 2)} When  $A|_{\sH} \neq 0$.  The set
 $$J : = \{w \in \sH, \ A_{ww} = 0\}$$ is strictly smaller than $\sH$.   We claim that $\widehat{Y} \cap \sH$ is contained in $J$. Pick a general $w \in \widehat{Y} \cap \sH.$
 To prove the claim, it suffices to show $A_{ww} = 0.$
Using the notation $\lambda= a \chi^A$  and writing  $w \in \sH \cap \widehat{Y}$ as $w = u + v, u \in \widehat{S}, v \in \widehat{S}'$ from Lemma \ref{L.1} (1),  we have $\lambda(v) = 0$ from $\widehat{S}' \subset \sH$.
But then $\lambda(w) = \lambda(u) = 0$ because $ w \in \sH$.
Consequently,  $$A_{uu} = 2 \lambda(u) u + h^A(f_{uu}) = 0 \mbox{ and } A_{vv} = 2 \lambda(v) v + h^A(f_{vv}) = 0.$$ Since we have $A_{uv} =0$ from Lemma \ref{L.2},
we have  $$A_{ww} = A_{uu} + 2 A_{uv} + A_{vv} =0.$$ This proves the claim.

 The intersection $\BP \sH \cap Y$ is defined by the cubic polynomial $f|_{\sH}$, while
  by the above claim it satisfies a nontrivial system of quadratic equations defining $J \subset \BP \sH$. Thus
we can write the divisor on $\sH$ defined by $f|_{\sH}$ as $2 H_1 \cup H_2$, a double hyperplane and another hyperplane in $\sH$.

Assume that $H_1 \neq H_2$. Any point of  $H_2 \setminus H_1$ is a nonsingular point of the scheme-theoretic intersection $\sH \cap (f=0),$ hence is a nonsingular point of $\widehat{Y}$. It follows that both $\widehat{S} \cap \sH$ and $ \widehat{S}'$ are contained in the subspace $ H_1 \subset \sH$. But $\BP H_2 \subset Y = {\rm Join}(S, S').$
Thus any point $ z \in\BP  H_2 \setminus \BP H_1$ is contained in  a line $\ell$ joining $S$ and $S'$.
     If $\ell \not\subset \BP \sH$, then  $\ell \cap \BP \sH \in S'$ which cannot contain $z$. If $\ell \subset \BP \sH$, it intersects $\BP H_1$ at two distinct points,
      one in $S \cap \BP \sH \subset \BP H_1$ and another in $S' \subset \BP H_1$. Thus it is
      contained in $\BP H_1$ and cannot contain $z$, either. So we have a contradiction.

   The only possibility is that $H_1 = H_2$ and the scheme-theoretic intersection is $\sH \cap (f=0) = 3 H_1$, a triple hyperplane in $\sH$.
   We can choose coordinates $(w_1, \ldots, w_n), n= \dim W,$ on $W$ such that $\sH = (w_n=0)$ and
   $H_1 = (w_n = w_{n-1} =0)$. From $\sH \cap (f=0) = 3 H_1$, a defining equation of $Y$
   can be written as
   $$f(w,w,w) =  w_{n-1}^3 + \sum_{i,j=1}^{n} q_{ij} w_i w_{j} w_n = 0$$
   for some $ q_{ij}=q_{ji} \in \C$. From
   $${\rm d} f = 3 w_{n-1}^2 {\rm d} w_{n-1} + \sum_{i,j=1}^{n} q_{ij} w_i w_{j} {\rm d} w_n
   + 2 w_n \sum_{i,j=1}^n q_{ij} w_j {\rm d} w_i,$$
   the variety ${\rm Sing}(\widehat{Y}) \cap \sH \cap H_1$ is the quadric hypersurface
   $$\sum_{i,j = 1}^{n-2} q_{ij} w_i w_j =0$$ in the linear space $w_{n-1} = w_n = 0.$
   If $n \geq 5,$ distinct components of this quadric hypersurface must intersect. This is a contradiction  because ${\rm Sing}(\widehat{Y}) \cap \sH \cap H_1$ should have at least two disjoint components, including $S'$ and components of $S \cap \langle S' \rangle$. On the other hand, if $\dim W \leq 4,$ it is easy to derive a contradiction from Lemma \ref{L.1}. This finishes the proof of Theorem \ref{t.cubic2}  \end{proof}

\medskip
{\bf Acknowledgment} We would like to thank Igor Dolgachev for  enjoyable discussions on cubic hypersurfaces. I am very grateful to Baohua Fu for a careful reading and detailed comments on the first draft of the paper.

   \end{document}